\newtheorem{theorem}{Theorem}[section]
\newtheorem{proposition}[theorem]{Proposition}
\newtheorem{lemma}[theorem]{Lemma}
\newtheorem{corollary}[theorem]{Corollary}
\newtheorem{theorem*}{Theorem}
\theoremstyle{definition}
\newtheorem{definition}[theorem]{Definition}
\newtheorem{example}[theorem]{Example}
\theoremstyle{remark}
\newtheorem{remark}[theorem]{Remark}
\newcommand{\CA}{{\mathcal A}}
\newcommand{\CB}{{\mathcal B}}
\newcommand{\CE}{{\mathcal E}}
\newcommand{\CI}{{\mathcal I}}
\newcommand{\CJ}{{\mathcal J}}
\newcommand{\CM}{{\mathcal M}}
\newcommand{\CT}{{\mathcal{T}}}
\newcommand{\Z}{{\mathbb Z}}
\newcommand{\X}{{\mathcal X}}
\newcommand{\frakF}{{\mathfrak{F}_n}}
\newcommand{\frakE}{{\mathfrak{E}_n}}
\newcommand{\frakD}{\mathfrak{D}}
\newcommand{\frakG}{{\mathfrak{G}_n}}
\newcommand{\frakH}{{\mathfrak{H}_n}}
\newcommand{\wrt}{with respect to }
\newcommand{\PEF}{{\Phi}_\frakE(\frakF)}
\newcommand{\CPEF}{{\Phi}^\frakE(\frakF)}
\newcommand{\CPEG}{{\Phi}^\frakE(\frakG)}
\newcommand{\Ph}{{{\Phi}_n}}
\newcommand{\PB}{{\mathrm{PB}}}
\newcommand{\PO}{{\mathrm{PO}}}
\newcommand{\proj}{\text{-}\mathrm{proj}}
\newcommand{\inj}{\text{-}\mathrm{inj}}
\newcommand{\Eproj}{{\mathfrak E}_n\text{-}\mathrm{proj}}
\newcommand{\Einj}{{\mathfrak E}_n\text{-}\mathrm{inj}}
\newcommand{\xra}[1]{\xrightarrow{#1}}
\newcommand{\foa}{\pentagon}
\newcommand{\bsm}{\begin{smallmatrix}}
\newcommand{\esm}{\end{smallmatrix}}
\newcommand{\ra}{\rightarrow}
\begin{document}

\title[Ideal approximation in $n$-angulated categories]{Ideal approximation in $n$-angulated categories}
\author{Lingling Tan, Dingguo Wang, Tiwei Zhao$^\dagger$}
\address{School of Mathematical Sciences, Qufu Normal University, Qufu 273165, P.R. China}

\email{tanll@qfnu.edu.cn, dgwang@qfnu.edu.cn, tiweizhao@qfnu.edu.cn}

\date{}


\thanks{2020 MSC: 18E05, 18G25, 18E30}
 \thanks{Keywords: $n$-angulated category; almost $n$-exact structure; phantom ideal; precover; preenvelope; ideal cotorsion theory}
\thanks{$^\dagger$Corresponding author}

\begin{abstract}
We study ideal approximation theory associated to almost $n$-exact structures in extension closed subcategories of $n$-angulated
categories. For $n=3$, an $n$-angulated category is nothing but a classical triangulated category. Moreover, since every exact category can be embedded as an extension closed subcategory of a triangulated category, therefore, our approach extends the recent ideal approximations theories developed by Fu, Herzog et al. for exact categories and by Breaz and Modoi for triangulated categories.
\end{abstract}

\baselineskip=15pt
\maketitle


\section{Introduction}

Recently, the study of higher homological algebra is an active topic, and
 its aim is to provide a higher version  of the classical theory. Let $n$ be an integer greater than or equal to three.
Geiss, Keller and Oppermann \cite{GKO} introduced the notion of $n$-angulated categories, which is a ``higher dimensional" analogue of triangulated categories.
Since then,  the theory of $n$-angulated categories has been developed by many authors. For example, Geiss, Keller and Oppermann gave a construction of
$n$-angulated categories in terms of certain cluster tilting subcategories.  Bergh,  Jasso and Thaule \cite{BJT} showed that for a commutative local ring with principal maximal ideal
squaring to zero, the category of finitely generated free modules is $n$-angulated for every $n\geq 3$. More generally, Lin \cite{L} presented a general framework to unify the two constructions
of $n$-angulated categories in \cite{GKO} and \cite{BJT}, and also constructed new examples of $n$-angulated
categories.
Besides, Jacobsen and J{\o}rgensen \cite{JJ}, and Zhou and Zhu \cite{ZZ} investigated the quotient categories of $n$-angulated categories. For more references, see \cite{BS,BT1,BT2,F,F2,J,L0,L2,Z}.

In an abstract category, objects and morphisms are two essential components; and by a
well-known embedding from a category to its morphism category, objects can be viewed as
special morphisms. In the classical approximation theory, we mainly concern the objects
and the associated subcategories. However, in general case, it seems that the morphisms
and the associated ideals also should be concerned in the approximation theory. From this
point of view, Fu, Guil Asensio, Herzog and Torrecillas  \cite{Fu-et-al} introduced the notion of ideal
cotorsion pairs and developed the ideal approximation theory of exact categories. Inside
it, the phantom ideal plays an important role in the aspect of providing a certain ideal
cotorsion pair. Recently, Breaz and  Modoi \cite{BM} extended ideal approximation theory into triangulated categories,
and  Asadollahi and Sadeghi \cite{AS} extended ideal approximation theory into $n$-exact categories which is a ``higher dimensional" analogue of exact categories.
The theory of ideal approximation has been developed
further, see e.g. \cite{Ch-98,Estrada-et-al,Fu-Herzog,H}.

Following these ideas, in the present paper we will study the higher dimensional analogy of Breaz and Modoi's result in \cite{BM} in extension closed subcategories of $n$-angulated categories. For $n = 3$,
an $n$-angulated category is nothing but a classical triangulated category. Moreover, since every exact category can be embedded as an extension closed subcategory
of a triangulated category, the theory presented here includes important parts from the theory developed in \cite{Fu-et-al} and \cite{BM}.

In Section \ref{Section-weak-proper}, we give some terminology and some preliminary results. We recall the definition of $n$-angulated categories, and introduce the notion of
 almost $n$-exact structures in an extension closed subcategory $\CA$ of an $n$-angulated category.

In Section \ref{section-precovering}, we study some general properties of precovers and preenvelops associated to some (phantom) ideals in $\CA$. We introduce the notions of $\frakE$-projective and $\frakE$-injective morphisms associated to an almost $n$-exact structure $\frakE$.  It is proved that the existence of enough $\frakE$-injective or $\frakE$-projective morphisms is connected with the existence of some ideal precovers or preenvelopes, see Theorem \ref{Th-procov-vs-Iinj}.
Then we define the orthogonality of morphisms associated to an almost $n$-exact structure $\frakE$, and we introduce special precovering and special preenveloping ideals.  In this context one of the main results is Salce's Lemma (Theorem \ref{salce-lemma}) which tells us that in many cases all
precovers/preenvelopes are special.
This lemma was appeared firstly in the study of approximation theory of objects, especially in the study of cotorsion theory \cite{Sa}, and  was extended to the ideal version associated to exact categories in \cite{Fu-et-al}, and associated to triangulated categories in \cite{BM}. In Theorem \ref{salce-lemma} we will prove the corresponding result in the setting of $n$-angulated categories. We also introduce the notion of (complete) ideal cotrosion pairs in terms of the orthogonality of morphisms and ideals, and  apply the previous work  to obtain a characterization for complete ideal cotorsion pairs, see Theorem \ref{cotorsion-precovering}.

The main aim of Section \ref{Section-relative-cotorsion-pairs} is to provide characterizations for complete ideal torsion pairs.
Let $\frakF$ be an almost $n$-exact structure which is included in $\frakE$. We first construct ideals from almost $n$-exact structures by introducing the notions of $\frakF$-phantoms and $\frakF$-cophantoms (relative to $\frakE$). Following them, we can get
some kinds of orthogonal pairs of ideals. On the other hand, we can reverse this process by starting with an ideal, namely   we can define two kinds of almost $n$-exact structures $\mathfrak{PB}_\frakE(\CI)$ and $\mathfrak{PO}_\frakE(\CI)$ associated to an ideal $\CI$, which are called pullback and pushout
almost $n$-exact structures respectively. In this way we obtain two Galois correspondences between the class of ideals in $\CA$ and the class of
 almost $n$-exact structures included in $\frakE$ (Theorem \ref{Galois-correspondences}).
Based on the above work, we get the main result of this paper:

\medskip

{\bf Theorem.} {\rm (Theorem \ref{mainthA})}
Let $\mathcal{T}$ be an $n$-angulated category, and $\CA$ a full subcategory of $\mathcal{T}$ which is closed under extensions.
Let $\frakE$ be an almost $n$-exact structure for $\CA$ such that there are enough $\frakE$-injective morphisms
and $\frakE$-projective morphisms, and let $(\CI,\CJ)$ be an ideal cotorsion pair in $\CA$.
Then the following statements are equivalent.
\begin{enumerate}[{\rm (a)}]
 \item $\CI$ is precovering.
 \item $\CI$ is special precovering.
 \item $\CJ$ is preenveloping.
 \item $\CJ$ is special preenveloping.
\item There exists an almost $n$-exact structure $\frakH\subseteq \frakE$ with enough special injective morphisms such that $\CI=\mathrm{\Phi}_\frakE(\frakH)$.
\item There exists an almost $n$-exact structure $\frakF\subseteq \frakE$ with enough injective morphisms such that $\CI=\PEF$.
\item There exists an almost $n$-exact structure $\frakH\subseteq \frakE$ with enough special injective morphisms such that
$\CJ=\frakH\inj$.
\item There exists an almost $n$-exact structure $\mathfrak{G}_n\subseteq \frakE$ with enough special projective morphisms such that $\CJ=\CPEG$.
\item There exists an almost $n$-exact structure $\mathfrak{F}_n\subseteq \frakE$ with enough projective morphisms such that $\CJ=\CPEF$.
\item There exists an almost $n$-exact structure $\mathfrak{G}_n\subseteq \frakE$ with enough special projective morphisms such that
$\CI=\mathfrak{G}_n\proj$.
\end{enumerate}

\section{Almost $n$-exact structures in $n$-angulated categories and ideals}\label{Section-weak-proper}

\subsection{$n$-angulated categories}

Let $\mathcal{T}$ be an additive category with an automorphism
$\Sigma$, and $n$ an integer greater or equal than three. A sequence of morphisms
in $\mathcal{T}$
\[
\mathfrak{d}: \ \ A_1\xra{\alpha_1} A_2\xra{\alpha_2}\cdots\xra{\alpha_{n-1}} A_n\xra{\alpha_n}\Sigma A_1
\]
is an $n$-$\Sigma$-\emph{sequence}. Its \emph{left rotation} is the
$n$-$\Sigma$-sequence
\[
A_2 \xra{\alpha_2} A_3 \xra{\alpha_3}\cdots\xra{\alpha_n}
\Sigma A_1\xra{(-1)^n\Sigma\alpha_1} \Sigma A_2.
\]
The $n$-$\Sigma$-sequences of the form
$A\xra{1_A} A \xra{} 0 \xra{}\cdots  \xra{}0 \xra{} \Sigma A$
for $A\in\mathcal{T}$, and their rotations, are called \emph{trivial}.

An $n$-$\Sigma$-sequence $\mathfrak{d}$ is \emph{exact} if the induced sequence
\[
\mathcal{T}(-,\mathfrak{d}):\quad\cdots\ra\mathcal{T}(-,A_1)\ra\mathcal{T}(-,A_2)\ra\cdots\ra\mathcal{T}(-,A_n)\ra
\mathcal{T}(-,\Sigma A_1)\ra\cdots
\]
 is exact. In particular, the trivial
$n$-$\Sigma$-sequences are exact.

A \emph{morphism} of $n$-$\Sigma$-sequences is given by
a sequence of morphisms $\varphi=(\varphi_1,\varphi_2,\ldots,\varphi_n)$ such
that the following diagram commutes:
\[
\xymatrix@=0.7cm{
& A_1\ar[r]^{\alpha_1}\ar[d]^{\varphi_1} & A_2\ar[r]^{\alpha_2}\ar[d]^{\varphi_2} &
A_3\ar[r]\ar[d]^{\varphi_3}&\cdots\ar[r] & A_n\ar[r]^{\alpha_n}\ar[d]^{\varphi_n} &\Sigma A_1\ar[d]^{\Sigma\varphi_1}  \\
&B_1\ar[r]^{\beta_1} &B_2\ar[r]^{\beta_2} & B_3\ar[r] &\cdots\ar[r]&B_n\ar[r]^{\beta_n} &\Sigma B_n
}
\]

We call a collection $\foa$ of $n$-$\Sigma$-sequences an
{\em $n$-angulation of  $(\mathcal{T},\Sigma)$}
and its elements $n$-\emph{angles}
if $\foa$ fulfills the following axioms:
\begin{itemize}
\item[(F1)]
  \begin{itemize}
  \item[(a)]
    $\foa$ is closed under direct sums and under taking direct summands.
  \item[(b)]
For each $A\in\mathcal{T}$, the trivial $n$-$\Sigma$ sequence
$$A\xra{1_A} A \xra{} 0 \xra{}\cdots  \xra{}0 \xra{} \Sigma A$$
  belongs to $\foa$.
  \item[(c)]
    For each morphism $\alpha_1: A_1\ra A_2$ in $\mathcal{T}$, there exists an
    $n$-angle
    \[
A_1\xra{\alpha_1} A_2\xra{}\cdots\xra{} A_n\xra{}\Sigma A_1
\]
  \end{itemize}
\item[(F2)]
An $n$-$\Sigma$-sequence $\mathfrak{d}$ belongs to $\foa$
if and only if its left rotation
\[
A_2 \xra{\alpha_2} A_3 \xra{\alpha_3}\cdots\xra{\alpha_n}
\Sigma A_1\xra{(-1)^n\Sigma\alpha_1} \Sigma A_2
\]
belongs to $\foa$.
\item[(F3)] Each commutative diagram
\[
\xymatrix@=0.7cm{
& A_1\ar[r]^{\alpha_1}\ar[d]^{\varphi_1} & A_2\ar[r]^{\alpha_2}\ar[d]^{\varphi_2} &
A_3\ar[r]&\cdots\ar[r] & A_n\ar[r]^{\alpha_n} &\Sigma A_1\ar[d]^{\Sigma\varphi_1}  \\
&B_1\ar[r]^{\beta_1} &B_2\ar[r]^{\beta_2} & B_3\ar[r] &\cdots\ar[r]&B_n\ar[r]^{\beta_n} &\Sigma B_n
}
\]
with rows in $\foa$ can be completed to a morphism of $n$-$\Sigma$-sequences
\[
\xymatrix@=0.7cm{
& A_1\ar[r]^{\alpha_1}\ar[d]^{\varphi_1} & A_2\ar[r]^{\alpha_2}\ar[d]^{\varphi_2} &
A_3\ar[r]\ar@{-->}[d]^{\varphi_3}&\cdots\ar[r] & A_n\ar[r]^{\alpha_n}\ar@{-->}[d]^{\varphi_n} &\Sigma A_1\ar[d]^{\Sigma\varphi_1}  \\
&B_1\ar[r]^{\beta_1} &B_2\ar[r]^{\beta_2} & B_3\ar[r] &\cdots\ar[r]&B_n\ar[r]^{\beta_n} &\Sigma B_n.
}
\]

\item[(F4)]
In the situation of (F3) the morphisms $\varphi_3,\varphi_4,\ldots,\varphi_n$ can be
chosen such that the cone ${\rm Cone}(\varphi)$:
\[
\xymatrix{
A_2\oplus B_1\quad
\ar[r]^{\left(\bsm -\alpha_2& 0\\ \phantom{-}\varphi_2&\beta_1\esm\right)}&
\quad A_3\oplus B_2\quad
\ar[r]^{\left(\bsm -\alpha_3& 0\\ \phantom{-}\varphi_3&\beta_2\esm\right)}&
\quad \cdots\quad
\ar[r]^{\left(\bsm -\alpha_n& 0\\ \phantom{-}\varphi_n&\beta_{n-1}\esm\right)}&
\qquad\Sigma A_1\oplus B_n\quad
\ar[r]^{\left(\bsm -\Sigma\alpha_1& 0\\ \phantom{-}\Sigma\varphi_1& \beta_n\esm\right)}&
\quad \Sigma A_2\oplus \Sigma B_1
}
\]
belongs to $\foa$.
\end{itemize}

Let $\mathcal{T}$ be an additive category with an automorphism
$\Sigma$, and $n$ an integer greater or equal than three.  The triple $(\mathcal{T},\Sigma,\foa)$ is called an $n$-\emph{angulated} category if $\foa$ is an
{ $n$-angulation of  $(\mathcal{T},\Sigma)$}. We usually say that $\mathcal{T}$ is an $n$-angulated category if there is no confusion.

\begin{remark}\label{2.1.1}
Let $\mathcal{T}$ be an $n$-angulated category.
\begin{itemize}
  \item[(1)] Given any $n$-angle $$\mathfrak{d}:A_1\overset{\alpha_1}\to A_2\overset{\alpha_2}\to \cdots\to A_n\overset{\alpha_n}\to \Sigma A_1$$
  and any morphism $g:A_1\to C_1$ in $\mathcal{T}$. By (F1)(c), we have an $n$-angle
  $$A_n\overset{\Sigma g\alpha_n}\to \Sigma C_1\to \Sigma C_2\to \cdots \to \Sigma C_{n-1}\to \Sigma A_n$$
  in $\mathcal{T}$.  Moreover, by (F3) and (F2), we can get the following morphism of $n$-angles
 \[\xymatrix@=0.5cm{           
  \mathfrak{d}:& A_1\ar[r]^{\alpha_1}\ar[d]^g & A_2\ar[r]^{\alpha_2}\ar[d]& \cdots \ar[r]& A_{n-1}\ar[r] \ar[d]   & A_n\ar[r]^{\alpha_n}  \ar@{=}[d]   & \Sigma A_1 \ar[d]^{\Sigma g}\\
  g\mathfrak{d}: & C_1\ar[r] &C_2\ar[r]& \cdots \ar[r]& C_{n-1}\ar[r]    & A_n\ar[r]^{\Sigma g \alpha_n}     & \Sigma C_1.         }\]

  Dually,  given any $n$-angle
 $$\mathfrak{d}:A_1\overset{\alpha_1}\to A_2\overset{\alpha_2}\to \cdots\to A_n\overset{\alpha_n}\to \Sigma A_1$$  and any morphism $f:B_n\to A_n$  in $\CA$. By (F1)(c) and (F2), we have an $n$-angle
  \[\xymatrix@=0.5cm{           
  \mathfrak{d}f:& A_1\ar[r] & B_2\ar[r]& \cdots \ar[r]& B_{n-1}\ar[r]    & B_n\ar[r]^{\alpha_n f}     & \Sigma A_1. }\]
 Moreover, by (F3) and (F2), we can get the following morphism of $n$-angles
 \[\xymatrix@=0.5cm{           
  \mathfrak{d}f:& A_1\ar[r]\ar@{=}[d] & B_2\ar[r]\ar[d]& \cdots \ar[r]& B_{n-1}\ar[r] \ar[d]   & B_n\ar[r]^{\alpha_n f}  \ar[d]^f   & \Sigma A_1 \ar@{=}[d]\\
  \mathfrak{d}: & A_1\ar[r]^{\alpha_1} &A_2\ar[r]^{\alpha_2}& \cdots \ar[r]& A_{n-1}\ar[r]    & A_n\ar[r]^{\alpha_n}     & \Sigma A_1.         }\]

 \item[(2)] By \cite[Proposition 2.5]{GKO}, each $n$-angle is  exact. The following statements are obvious.
 \begin{itemize}
   \item [(i)] Given any $n$-angle $$\mathfrak{d}:A_1\overset{\alpha_1}\to A_2\overset{\alpha_2}\to \cdots\to A_n\overset{\alpha_n}\to \Sigma A_1$$ in $\mathcal{T}$. Then $\alpha_{i+1}\alpha_i=0$ for any $1\leq i\leq n-1$, and $(\Sigma \alpha_1)\alpha_n=0$.
   \item [(ii)] For any $1\leq i\leq n-1$, if $f:A_i\to B$ is a morphism with $f\alpha_{i-1}=0$, then there exists a morphism $g:A_{i+1}\to B$ such that $f=g\alpha_i$, that is, there is the following commutative diagram:
      \[\xymatrix@=0.5cm{           
   A_1\ar[r]^{\alpha_1} & \cdots \ar[r]& A_{i-1}\ar[r]^{\alpha_{i-1}}\ar[rd]_0& A_{i}\ar[r]^{\alpha_i}\ar[d]^f & A_{i+1}\ar[r]^{\alpha_{i+1}}\ar@{.>}[ld]^g&\cdots\ar[r]   & A_n\ar[r]^{\alpha_n}    & \Sigma A_1 \\
     &&&B&& & }.\]
     \item [(iii)] For any $1\leq i\leq n-1$, if $f:B\to A_i$ is a morphism with $\alpha_{i}f=0$, then there exists a homomorphism $g:B\to A_{i-1}$ such that $f=\alpha_{i-1}g$, that is, there is the following commutative diagram:
      \[\xymatrix@=0.7cm{           
      &&&B\ar@{.>}[ld]_g\ar[d]^f\ar[rd]^0&& &\\
   A_1\ar[r]^{\alpha_1} & \cdots \ar[r]& A_{i-1}\ar[r]^{\alpha_{i-1}}& A_{i}\ar[r]^{\alpha_i} & A_{i+1}\ar[r]^{\alpha_{i+1}}&\cdots\ar[r]   & A_n\ar[r]^{\alpha_n}    & \Sigma A_1
     }.\]
 \end{itemize}
 \item[(3)] Following \cite[Lemma 3.1.3]{F}, for an $n$-angle $$\mathfrak{d}:A_1\overset{\alpha_1}\to A_2\overset{\alpha_2}\to \cdots\to A_{n-1}\overset{\alpha_{n-1}}\to A_n\overset{\alpha_n}\to \Sigma A_1$$ in $\mathcal{T}$, the following are equivalent:
     \begin{enumerate}[{\rm (i)}]
       \item $\alpha_n=0$.
       \item $\alpha_1$ is a split monomorphism.
       \item $\alpha_{n-1}$ is a split epimorphism.
     \end{enumerate}
     If one of the above condition holds, then we call the $n$-angle $\mathfrak{d}$ \emph{split}.
  \end{itemize}
\end{remark}

\subsection{Almost $n$-exact structures}

Let $\CT$ be an $n$-angulated category. If
$$\mathfrak{d}:A_1\overset{\alpha_1}\to A_2\overset{\alpha_2}\to \cdots\to A_n\overset{\alpha_n}\to \Sigma A_1$$ is an $n$-angle in $\CT$, we will say that $\alpha_n$ is the phantom map corresponding to
$\mathfrak{d}$.

If $\CA$ is a full subcategory (closed with respect to isomorphisms)
of $\CT$, one says that it is \textsl{closed under extensions} if for every $n$-angle
$$\mathfrak{d}:A_1\overset{\alpha_1}\to A_2\overset{\alpha_2}\to \cdots\to A_n\overset{\alpha_n}\to \Sigma A_1$$ in $\CT$
such that
 $A_1$ and $A_n$ are objects in $\CA$ then $A_2,\cdots, A_{n-1}$ are  objects in $\CA$, see \cite[Definition 3.6]{L0}. We will denote by $\frakD_\CA$ the class of all
  $n$-angles
  $$\mathfrak{d}:A_1\overset{\alpha_1}\to A_2\overset{\alpha_2}\to \cdots\to A_{n-1}\overset{\alpha_{n-1}}\to A_n\overset{\alpha_n}\to \Sigma A_1$$
   such that $A_1,A_2,\cdots, A_n\in\CA$. In this case, we will say that
\begin{itemize}
  \item $\mathfrak{d}$ is an \textsl{$n$-angle from $\CA$} or in $\frakD_\CA$.
  \item $\alpha_1$ is a $\frakD_\CA$-inflation.
  \item $\alpha_{n-1}$ is a $\frakD_\CA$-deflation.
\end{itemize}

Assume that $\mathcal{A}$ is a full subcategory of $\CT$ which is closed under extensions.
A class of $n$-angles $\frakE\subseteq \frakD_\CA$ is called an \textsl{almost $n$-exact structure} for $\CA$ if it satisfies the following conditions:
\begin{enumerate}[{\rm (N1)}]
 \item $\frakE$ is closed under
 direct sums and contains {all split $n$-angles},
 \item If
 $$\mathfrak{d}:A_1\overset{\alpha_1}\to A_2\overset{\alpha_2}\to \cdots\to A_n\overset{\alpha_n}\to \Sigma A_1$$ is an $n$-angle in $\frakE$ and $f:B_n\to A_n$ is a morphism in $\CA$, then
 the top $n$-angle $\mathfrak{d}f$ in each morphism of $n$-angles
 \[\xymatrix@=0.5cm{           
  \mathfrak{d}f:& A_1\ar[r]\ar@{=}[d] & B_2\ar[r]\ar[d]& \cdots \ar[r]& B_{n-1}\ar[r] \ar[d]   & B_n\ar[r]^{\alpha_n f}  \ar[d]^f   & \Sigma A_1 \ar@{=}[d]\\
  \mathfrak{d}: & A_1\ar[r]^{\alpha_1} &A_2\ar[r]^{\alpha_2}& \cdots \ar[r]& A_{n-1}\ar[r]    & A_n\ar[r]^{\alpha_n}     & \Sigma A_1         }\]
is in $\frakE$.
\item
 If
 $$\mathfrak{d}:A_1\overset{\alpha_1}\to A_2\overset{\alpha_2}\to \cdots\to A_n\overset{\alpha_n}\to \Sigma A_1$$ is an $n$-angle in $\frakE$ and $g:A_1\to C_1$ is a morphism in $\CA$, then
 the bottom $n$-angle $g\mathfrak{d}$ in each morphism of $n$-angles
 \[\xymatrix@=0.5cm{           
  \mathfrak{d}:& A_1\ar[r]^{\alpha_1}\ar[d]^g & A_2\ar[r]^{\alpha_2}\ar[d]& \cdots \ar[r]& A_{n-1}\ar[r] \ar[d]   & A_n\ar[r]^{\alpha_n}  \ar@{=}[d]   & \Sigma A_1 \ar[d]^{\Sigma g}\\
  g\mathfrak{d}: & C_1\ar[r] &C_2\ar[r]& \cdots \ar[r]& C_{n-1}\ar[r]    & A_n\ar[r]^{\Sigma g \alpha_n}     & \Sigma C_1         }\]
is in $\frakE$.
\end{enumerate}

Throughout this paper, unless otherwise stated, {we always assume that $\CT$ is an $n$-angulated category, $\CA$ is a full subcategory which is closed under extensions,
and $\frakE$ is an almost $n$-exact structure for $\CA$.}

Let
$$\mathfrak{d}:A_1\overset{\alpha_1}\to A_2\overset{\alpha_2}\to \cdots\to A_{n-1}\overset{\alpha_{n-1}}\to A_n\overset{\alpha_n}\to \Sigma A_1$$
be an $n$-angle which lies in  $\frakE$.
We will say that \begin{itemize} \item  $\alpha_1$ is a
{\it $\frakE$-inflation},
\item $\alpha_{n-1}$ is a {\it $\frakE$-deflation}, and
\item $\alpha_n$ is a {\it $\frakE$-phantom}.
\end{itemize}
We denoted by $\Ph(\frakE)$ the class of all $\frakE$-phantoms.

\begin{lemma}\label{infl-defl-factors}\label{lemma-factorizari-2}
Let $\frakE$ be an almost $n$-exact structure for $\mathcal{A}$, and let $f:A_1\to B_2$ and $g:B_2\to A_2$  be morphisms in $\CA$.
\begin{enumerate}[{\rm (1)}]
\item If $f$ is a $\mathfrak{D}_\CA$-inflation and $gf$ is a $\frakE$-inflation then  $f$ is a $\frakE$-inflation.
\item If $g$ is a $\mathfrak{D}_\CA$-deflation and $gf$ is a $\frakE$-deflation then  $g$ is a $\frakE$-deflation.
\end{enumerate}
\end{lemma}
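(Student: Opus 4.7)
The plan is to reduce each half to the pullback axiom (N2), respectively the pushout axiom (N3), by factoring the relevant phantom maps and invoking the uniqueness of $n$-angle completions.

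For (1), fix $\beta: A_1 \xra{f} B_2 \to \cdots \to B_n \xra{\beta_n} \Sigma A_1$ in $\frakD_\CA$ witnessing that $f$ is a $\frakD_\CA$-inflation, and $\alpha: A_1 \xra{gf} A_2 \to \cdots \to A_n' \xra{\alpha_n} \Sigma A_1$ in $\frakE$ witnessing that $gf$ is an $\frakE$-inflation. Exactness of $\beta$ gives $(\Sigma f)\beta_n = 0$, whence
\[
(\Sigma\alpha_1)\beta_n = \Sigma(gf)\beta_n = (\Sigma g)(\Sigma f)\beta_n = 0.
\]
Applying exactness of $\mathcal{T}(B_n,\alpha)$ at $\mathcal{T}(B_n,\Sigma A_1)$ (Remark 2.1.1(2)) produces $\varphi_n: B_n \to A_n'$ with $\alpha_n\varphi_n = \beta_n$. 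Axiom (N2) then yields $\alpha\varphi_n \in \frakE$, and by the explicit form recalled in Remark 2.1.1(1) this $n$-angle has $\beta_n$ as its last morphism. Since $\beta$ and $\alpha\varphi_n$ both complete the phantom $\beta_n$ to an $n$-angle, the uniqueness up to isomorphism of such completions gives $\beta \cong \alpha\varphi_n$, so $\beta \in \frakE$ and $f$ is an $\frakE$-inflation.

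Part (2) is dual. With $\alpha: X_1 \to \cdots \to A_1 \xra{gf} A_2 \xra{\alpha_n} \Sigma X_1$ in $\frakE$ and $\beta: Y_1 \to \cdots \to B_2 \xra{g} A_2 \xra{\beta_n} \Sigma Y_1$ in $\frakD_\CA$, exactness of $\beta$ gives $\beta_n g = 0$, hence $\beta_n(gf) = 0$. The analogue of Remark 2.1.1(2)(ii) at position $n$ (obtained either directly or after one rotation) yields $h: \Sigma X_1 \to \Sigma Y_1$ with $\beta_n = h\alpha_n$. Setting $u = \Sigma^{-1} h: X_1 \to Y_1$, axiom (N3) gives $u\alpha \in \frakE$ whose last morphism is $(\Sigma u)\alpha_n = h\alpha_n = \beta_n$, and the same uniqueness step delivers $\beta \cong u\alpha \in \frakE$.

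The one non-mechanical ingredient is the uniqueness, up to isomorphism, of $n$-angle completions sharing a given phantom map. I would produce the required isomorphism by constructing a comparison morphism of $n$-angles via (F3), after a single rotation so that the shared morphism sits in the first two positions, and then applying the $n$-angulated five lemma, a routine consequence of the mapping cone axiom (F4). This, combined with the standing convention that $\frakE$ is closed under isomorphism of $n$-angles, is what makes the pullback/pushout construction suffice.
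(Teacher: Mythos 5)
Your proof is correct, but it reaches the conclusion along a noticeably longer route than the paper, and it leans on a "standing convention" that the paper never states.

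The paper's proof is more direct. It takes the two $n$-angles on $f$ and on $gf$, observes that $(1_{A_1},g)$ is a commuting square between their first two entries, applies (F3) to complete it to a morphism of $n$-angles, and notes that this morphism automatically has the shape demanded by (N2) (identity at $A_1$ and at $\Sigma A_1$). Hence the top row is in $\frakE$ and we are done; no exactness computation and no five lemma. You work from the opposite end: you use the exactness of the angle to produce $\varphi_n\colon B_n\to A_n'$ with $\alpha_n\varphi_n=\beta_n$, you build the pullback $\alpha\varphi_n\in\frakE$ via (N2), and you then argue $\beta\cong\alpha\varphi_n$ via an $n$-angulated five lemma, invoking closure of $\frakE$ under isomorphism. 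All of this is valid, but the last two steps are wasted effort. Once you have $\varphi_n$, the commuting square $(\varphi_n,1_{\Sigma A_1})$ between the last two entries of $\beta$ and $\alpha$ can be completed, after a rotation via (F2) and then (F3), to a morphism $\beta\to\alpha$ with identities at $A_1$ and $\Sigma A_1$ and $\varphi_n$ at position $n$; this is already exactly the diagram to which (N2) applies, giving $\beta\in\frakE$ directly. The five lemma and the uniqueness of cone completions never need to enter, and you avoid appealing to closure of $\frakE$ under isomorphism, which is not one of (N1)--(N3). (That closure is a consequence of (N2) applied with $f$ an identity, so your argument does go through, but it should not be cited as a standing convention.) Two small cautions for the record: your factorization step at position $n$ falls outside the literal scope of Remark 2.1.1(2)(ii), which is stated for $1\le i\le n-1$, so you do need the rotation you mention; and the $n$-angulated five lemma is more naturally a consequence of the exactness of the Hom-functors on $n$-angles than of (F4).
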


\begin{proof}
We only prove (1), and (2) is similar. Since $f$ is a $\mathfrak{D}_\CA$-inflation, there is an  $n$-angles
\[\xymatrix@=0.5cm{           
   A_1\ar[r]^f & B_2\ar[r]& \cdots \ar[r]& B_{n-1}\ar[r]   & B_n\ar[r]   & \Sigma A_1 }\]
 in $\mathfrak{D}_\CA$. Since  $gf$ is a $\frakE$-inflation, there is an $n$-angle
\[\xymatrix@=0.5cm{           
    A_1\ar[r]^{gf} &A_2\ar[r]& \cdots \ar[r]& A_{n-1}\ar[r]    & A_n\ar[r]     & \Sigma A_1.         }\]
in $\frakE$ (of course, in $\mathfrak{D}_\CA$).  By (F3), there is a morphism of $n$-angles
\[\xymatrix@=0.5cm{           
   A_1\ar[r]^f\ar@{=}[d] & B_2\ar[r]\ar[d]^g& \cdots \ar[r]& B_{n-1}\ar[r] \ar[d]   & B_n\ar[r]  \ar[d]   & \Sigma A_1 \ar@{=}[d]\\
    A_1\ar[r]^{gf} &A_2\ar[r]& \cdots \ar[r]& A_{n-1}\ar[r]    & A_n\ar[r]     & \Sigma A_1.         }\]
    Finally, since $\frakE$ is an almost $n$-exact structure for $\mathcal{A}$, the top row is in $\frakE$ by (N2), and hence $f$ is a $\frakE$-inflation.
\end{proof}

\subsection{Phantom $\CA$-ideals}

For a subcategory $\CA$  of $\CT$ we denote  by $\CA^{\to}$ the class of all morphisms in $\CA$. Recall that an \textsl{ideal} $\mathcal{I}$ in $\CA$ is a class of morphisms in $\CA^\to$ which is closed under  sums of morphisms,
and for every chain
of composable  morphisms $A\overset{f}\to B\overset{i}\to C\overset{g}\to D$ in $\CA$,
if $i\in\CI$ then $gif\in\CI$, or equivalently, $\CI(-,-)$ is a subfunctor of the Hom-bifunctor $\CA(-,-)$.

Let $\mathcal{I}$ be a
class of morphisms in $\CA$. We write ${\rm Ob}(\mathcal{I}) := \{A\in\CA  \mid 1_A \in \mathcal{I}\}$. Let $\X$ be a full subcategory of $\CA$.
 Then we define
  \[<\X>=\{i\in\CA^\to\mid i\hbox{ factors through an object in }\X\}.\]
 If $\mathcal{I} = <{\rm Ob}(\mathcal{I})>$, then we call $\mathcal{I}$ an object ideal, that is,
it is generated by itself objects.

Recall from \cite[Definition 2.2.2]{BM} that  a class $\CE$ of morphisms in $\CT$ is called a \textsl{phantom $\CA$-ideal} if
\begin{enumerate}[(P1)]
\item $\CE\subseteq \CT(\CA,\Sigma\CA)=\bigcup_{A,B\in \CA}\CT(A,\Sigma B),$
\item $\CE$ is closed under  sums of morphisms,
and
\item $\Sigma\CA^{\to}\CE\CA^\to\subseteq \CE$, i.e. for every chain
of composable  morphisms $$A\overset{f}\to B\overset{i}\to \Sigma C\overset{\Sigma g}\to \Sigma D$$ in $\CT$
such that $i\in\CE$ and $f,g\in\CA^\to$ we have $(\Sigma g)if\in\CE$.
\end{enumerate}

In general, phantom $\CA$-ideals and ideals in $\CA$ are different notions. However,
if $\Sigma\CA=\CA$, in particular for $\CA=\CT$, then  a class of morphisms $\CI$ is a phantom $\CA$-ideal if and only if it is an ideal in $\CA$.

It is easy to check that $\Ph(\frakE)$ is a phantom $\CA$-ideal.  In fact,
there is an 1-to-1 correspondence between almost $n$-exact structures and
phantom $\CA$-ideals as follows. 

\begin{proposition}\label{pseudo-ph-vs-wpc}
Let $\frakE$ be a class of $n$-angles in $\frakD_\CA$. %
Then the following are equivalent.
\begin{enumerate}[{\rm (a)}]
 \item $\frakE$ is an almost $n$-exact structure for $\CA$;
 \item $\Sigma\CA^{\to}\Ph(\frakE)\CA^\to\subseteq \Ph(\frakE)$ and $\Ph(\frakE)$ is closed under (direct) sums of morphisms.
\end{enumerate}

Consequently, \begin{enumerate}[{\rm (i)}]
               \item
If $\frakE$ is an almost $n$-exact structure for $\CA$, then $\Ph(\frakE)$ is a phantom $\CA$-ideal,
and
\item for every phantom $\CA$-ideal $\CI$ the class $$\frakD_\CA(\CI):=\{\mathfrak{d}\in\frakD_\CA\mid \hbox{the phantom of }\mathfrak{d}\hbox{ is in }\CI\}$$
is an almost $n$-exact structure for $\CA$.
\item The correspondences from {\rm (i)} and {\rm (ii)} above are inverse to each other.
\end{enumerate}
\end{proposition}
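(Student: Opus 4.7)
The plan is to prove the biconditional (a) $\Leftrightarrow$ (b) directly from axioms (N1)--(N3) and the definition of $\Ph(\frakE)$, and then to read off the three consequences. The crucial technical ingredient, and where I expect the main difficulty, is the \emph{uniqueness of $n$-angles up to isomorphism given a prescribed phantom map}: for any morphism $i\colon B\to\Sigma C$ in $\CT$ with $B,C\in\CA$ there exists an $n$-angle in $\frakD_\CA$ whose $n$-th map is $i$, and any two such $n$-angles are isomorphic. Existence follows from (F1)(c) together with a rotation via (F2), the intermediate objects landing in $\CA$ by the extension-closed hypothesis. Uniqueness is the higher-dimensional analogue of the classical triangulated argument: given two such $n$-angles, (F3) produces a morphism between them with identities at positions $1$ and $n$, and (F4) then allows one to form the cone and conclude that this morphism is an isomorphism.

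For (a) $\Rightarrow$ (b), take a composable chain $A\xrightarrow{f}B\xrightarrow{i}\Sigma C\xrightarrow{\Sigma g}\Sigma D$ with $f,g\in\CA^{\to}$ and $i\in\Ph(\frakE)$. Choose $\mathfrak{d}\in\frakE$ with $A_1=C$, $A_n=B$ and last map $\alpha_n=i$. Axiom (N2) applied to $f\colon A\to A_n$ yields $\mathfrak{d}f\in\frakE$ whose phantom is $if$; axiom (N3) applied to $g\colon A_1\to D$ then yields an $n$-angle in $\frakE$ whose phantom is $(\Sigma g)if$. Hence $(\Sigma g)if\in\Ph(\frakE)$. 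Closure of $\Ph(\frakE)$ under direct sums is immediate from (N1), since the phantom of a direct sum of $n$-angles is the direct sum of their phantoms.

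For (b) $\Rightarrow$ (a), assume the stated conditions on $\Ph(\frakE)$, with $\frakE$ implicitly closed under isomorphism of $n$-angles. To check (N2), given $\mathfrak{d}\in\frakE$ with phantom $\alpha_n$ and $f\colon B_n\to A_n$ in $\CA$, Remark~\ref{2.1.1}(1) constructs $\mathfrak{d}f\in\frakD_\CA$ with phantom $\alpha_n f$, and the hypothesis gives $\alpha_n f\in\Ph(\frakE)$. By definition of $\Ph(\frakE)$ some element of $\frakE$ has phantom $\alpha_n f$; uniqueness then forces $\mathfrak{d}f$ to be isomorphic to it, and iso-closure places $\mathfrak{d}f\in\frakE$. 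Axiom (N3) is dual. For (N1), a split $n$-angle has phantom $0\in\Ph(\frakE)$ (by closure under sums), and a direct sum of $n$-angles in $\frakE$ has phantom in $\Ph(\frakE)$ by hypothesis; the same uniqueness argument places these $n$-angles in $\frakE$.

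The three consequences now fall out. Item (i) restates (a) $\Rightarrow$ (b), noting that $\Ph(\frakE)\subseteq\CT(\CA,\Sigma\CA)$ is automatic from $\frakE\subseteq\frakD_\CA$, so all of (P1)--(P3) hold. For (ii), given a phantom $\CA$-ideal $\CI$, the existence part of the uniqueness ingredient yields $\Ph(\frakD_\CA(\CI))=\CI$ (the reverse inclusion being tautological), so (b) is immediate for $\frakE=\frakD_\CA(\CI)$ and (b) $\Rightarrow$ (a) then shows that $\frakD_\CA(\CI)$ is an almost $n$-exact structure. Consequence (iii) packages the two identities $\Ph(\frakD_\CA(\CI))=\CI$ (just verified) and $\frakD_\CA(\Ph(\frakE))=\frakE$, the latter being an immediate application of uniqueness to match each $n$-angle in $\frakD_\CA(\Ph(\frakE))$ with one in $\frakE$ having the same phantom.
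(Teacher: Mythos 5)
The central technical ingredient of your proposal — that any two $n$-angles in $\frakD_\CA$ with the same phantom map are isomorphic — is false for $n>3$, and the argument you sketch for it does not work. Already for $n=4$, the trivial $4$-angle $A\xrightarrow{1_A}A\to 0\to 0\to \Sigma A$ and its direct sum with the $4$-angle $0\to C\xrightarrow{1_C}C\to 0\to 0$, namely $A\to A\oplus C\to C\to 0\to\Sigma A$, share the same endpoints $A$, $0$ and the same phantom map $0:0\to\Sigma A$, yet have non-isomorphic middle objects as soon as $C\neq 0$. Moreover, axiom (F4) only asserts that the cone of the completing morphism is again an $n$-angle; it does not imply that the morphism itself is invertible, so the "form the cone and conclude isomorphism" step has no basis.

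What \emph{does} hold — by applying (F3) to the $(n-1)$-fold left rotations, so that the shared first and last objects become adjacent, and then rotating back — is that two $n$-angles with the same phantom admit a morphism between them with identity components at positions $1$, $n$, and $n+1$. This is precisely the relation $\rightsquigarrow$ the paper introduces two paragraphs after the proposition, which generates $\thicksim$. For $n>3$ that relation is strictly weaker than isomorphism, so your fallback hypothesis "$\frakE$ implicitly closed under isomorphism of $n$-angles" is not strong enough for (b)~$\Rightarrow$~(a): what one actually needs is that membership of an $n$-angle in $\frakE$ depends only on its phantom map, i.e.\ $\frakE$ is $\thicksim$-saturated. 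Once that (tacit) convention is granted, the rest of your argument — (a)~$\Rightarrow$~(b) via (N2), (N3), (N1); the identities $\Ph(\frakD_\CA(\CI))=\CI$ and $\frakD_\CA(\Ph(\frakE))=\frakE$; and the reading-off of consequences (i)–(iii) — is correct in outline. The paper supplies no proof of this proposition, so I cannot compare against its intended argument, but the isomorphism claim and the (F4)-cone justification are a genuine gap in what you have written.
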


\begin{example}
Let $\mathcal{B}$ be a full subcategory in $\CT$
and let $\frakE$ be an almost $n$-exact structure for $\CA$.
Then $\CB$ induces
an almost $n$-exact structure $\mathfrak{F}_\CB\subseteq \frakE$ defined by the condition
$$\Ph(\mathfrak{F}_\CB)=\{\varphi\in\Ph(\frakE)\mid \varphi \text{ factorizes through an object } X\in\CB\}.$$
\end{example}

Let $\frakE$ be an almost $n$-exact structure for $\CA$.
Given two $n$-angles
\[\xymatrix@=0.5cm{           
 \mathfrak{d}: & A_1\ar[r] & B_2\ar[r]& \cdots \ar[r]& B_{n-1}\ar[r]   & A_n\ar[r]^{\alpha_n}    & \Sigma A_1 }\]
and
\[\xymatrix@=0.5cm{           
\mathfrak{d'}:& A_1\ar[r] &A_2\ar[r]& \cdots \ar[r]& A_{n-1}\ar[r]    & A_n\ar[r]^{\alpha_n}     & \Sigma A_1.         }\]
in $\frakE$.
We write $\mathfrak{d}\rightsquigarrow\mathfrak{d'}$  if there is a morphism of $n$-angles of the form:
\[\xymatrix@=0.5cm{           
 \mathfrak{d}: & A_1\ar[r]\ar@{=}[d] & B_2\ar[r]\ar[d]^{f_2}& \cdots \ar[r]& B_{n-1}\ar[r] \ar[d]^{f_{n-1}}   & A_n\ar[r]  \ar@{=}[d]   & \Sigma A_1 \ar@{=}[d]\\
\mathfrak{d'}:& A_1\ar[r] &A_2\ar[r]& \cdots \ar[r]& A_{n-1}\ar[r]    & A_n\ar[r]     & \Sigma A_1.         }\]
The relation $\rightsquigarrow$ satisfies the properties of reflexivity and transitivity, and thus generates  an equivalence relation $\thicksim$ on the class of all $n$-angles in $\frakE$
starting in $A_1$ and ending in $A_n$.

Since two $n$-angles in $\frakE$ are equivalent if and only if they have the same $\frakE$-phantom, it follows that the class of all $n$-angles in $\frakE$ starting in $A_1$ and ending in $A_n$ is a set modulo the equivalence of $n$-angles. We denote by $\frakE(A_n,A_1)$ this set.

By assigning to each $n$-angle in $\frakE$ its $\frakE$-phantom map we have an isomorphism
\[\frakE(A_n,A)\cong\Ph(\frakE)(A_n,\Sigma A).\]

\begin{remark}
Let $\frakE$ be an almost $n$-exact structure for $\CA$ and let
$$\mathfrak{d}:A_1\overset{\alpha_1}\to A_2\overset{\alpha_2}\to \cdots\to A_n\overset{\alpha_n}\to \Sigma A_1$$
 be an $n$-angle in $\frakE$.
If $f:X\to A_n$ and $g:A_1\to Y$ are two morphisms, we can construct the following morphisms of $n$-angles:
\[ \xymatrix@=0.5cm{ \mathfrak{d}: & A_1\ar[r]\ar@{=}[d] & A_2\ar[r]\ar@{<-}[d]&\cdots\ar[r]& A_{n-1} \ar[r]\ar@{<-}[d]  & A_n \ar[r]^{\phi}\ar@{<-}[d]^{f} & \Sigma A_1\ar@{=}[d]\\
\mathfrak{d}f: & A_1\ar[r]\ar[d]^{g} & B_2\ar[r]\ar[d] &\cdots\ar[r]& B_{n-1} \ar[r]\ar[d] &X \ar[r]^{\phi f}\ar@{=}[d] & \Sigma A_1\ar[d]^{\Sigma g} \\
g(\mathfrak{d}f): & Y\ar[r] & C_2\ar[r]&\cdots\ar[r] & C_{n-1} \ar[r]& X \ar[r]^{(\Sigma g)\phi f} & \Sigma C_1
}\]
We can also construct the following morphisms of $n$-angles:
 \[ \xymatrix@=0.5cm{ \mathfrak{d}: & A_1\ar[r]\ar[d]^g & A_2\ar[r]\ar[d]&\cdots\ar[r]& A_{n-1} \ar[r]\ar[d]  & A_n \ar[r]^{\phi}\ar@{=}[d] & \Sigma A_1\ar[d]^{\Sigma g}\\
g\mathfrak{d}: & Y\ar[r]\ar@{=}[d] & B'_2\ar[r] &\cdots\ar[r]& B'_{n-1} \ar[r] &A_n\ar[r]^{(\Sigma g)\phi} & \Sigma Y\ar@{=}[d] \\
(g\mathfrak{d})f: & Y\ar[r] & C'_2\ar[u]\ar[r]&\cdots\ar[r] & C'_{n-1} \ar[u]\ar[r]& X\ar[u]^f \ar[r]^{(\Sigma g)\phi f} & \Sigma C_1
}\]
Since both $n$-angles $g(\mathfrak{d}f)$ and $(g\mathfrak{d})f$ have the same $\frakE$-phantom map, namely $(\Sigma g)\alpha_n f$, it follows that $g(\mathfrak{d}f)$ and $(g\mathfrak{d})f$  are equivalent.
\end{remark}

An almost $n$-exact structure $\frakE$ is called an {\it $n$-exact structure} provided that it satisfies one of the equivalent conditions in the following lemma.

\begin{lemma}
Let $\frakE$ be an almost $n$-exact structure for $\CA$.
The following statements are equivalent.

\begin{enumerate}[{\rm (a)}]
\item If $A_1,C_1,A_n\in\CA$, $i:A_1\to C_1$ is a $\frakE$-inflation and $\alpha_n:A_n\to \Sigma A_1$, then $(\Sigma i)\alpha_n\in\Ph(\frakE)$ implies $\alpha_n\in\Ph(\frakE)$.
\item  Given any commutative diagram
\[\xymatrix@=0.5cm{           
  \mathfrak{d}:& A_1\ar[r]^{\alpha_1}\ar[d]^i & A_2\ar[r]^{\alpha_2}\ar[d]& \cdots \ar[r]& A_{n-1}\ar[r] \ar[d]   & A_n\ar[r]^{\alpha_n}  \ar@{=}[d]   & \Sigma A_1 \ar[d]^{\Sigma i}\\
 i\mathfrak{d}: & C_1\ar[r] &C_2\ar[r]& \cdots \ar[r]& C_{n-1}\ar[r]    & A_n\ar[r]^{(\Sigma i) \alpha_n}     & \Sigma C_1         }\]
   of $n$-angles in $\mathfrak{D}_\CA$ with $i$ a $\frakE$-inflation.
Then  $i\mathfrak{d}\in\frakE$ implies  $\mathfrak{d}\in\frakE$.
\item If $A_1,B_n,A_n\in\CA$, $p:B_n\to A_n$ is a $\frakE$-deflation and $\phi:A\to \Sigma C$, then $\alpha_n p\in\Ph(\frakE)$ implies $\alpha_n\in\Ph(\frakE)$.
\item Given any commutative diagram
 \[\xymatrix@=0.5cm{           
  \mathfrak{d}p:& A_1\ar[r]\ar@{=}[d] & B_2\ar[r]\ar[d]& \cdots \ar[r]& B_{n-1}\ar[r] \ar[d]   & B_n\ar[r]^{\alpha_n p}  \ar[d]^p   & \Sigma A_1 \ar@{=}[d]\\
  \mathfrak{d}: & A_1\ar[r]^{\alpha_1} &A_2\ar[r]^{\alpha_2}& \cdots \ar[r]& A_{n-1}\ar[r]    & A_n\ar[r]^{\alpha_n}     & \Sigma A_1         }\]
   of $n$-angles in $\mathfrak{D}_\CA$ with $p$ a $\frakE$-deflation.
Then  $\mathfrak{d}p\in\frakE$ implies $\mathfrak{d}\in \frakE$.
\end{enumerate}
\end{lemma}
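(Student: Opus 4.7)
The overarching tool is Proposition~\ref{pseudo-ph-vs-wpc}, which gives the correspondence: for $\mathfrak{d}\in\frakD_\CA$, one has $\mathfrak{d}\in\frakE$ if and only if the phantom of $\mathfrak{d}$ lies in $\Ph(\frakE)$. This immediately delivers (a)$\Leftrightarrow$(b) and (c)$\Leftrightarrow$(d) as bookkeeping, and reduces the lemma to the single genuine statement (a)$\Leftrightarrow$(c).

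For (a)$\Leftrightarrow$(b), reading off the diagram of (b) shows that the lower $n$-angle $i\mathfrak{d}$ has phantom $(\Sigma i)\alpha_n$, while the upper $n$-angle $\mathfrak{d}$ has phantom $\alpha_n$. By the Proposition, ``$i\mathfrak{d}\in\frakE\Rightarrow\mathfrak{d}\in\frakE$'' is literally ``$(\Sigma i)\alpha_n\in\Ph(\frakE)\Rightarrow\alpha_n\in\Ph(\frakE)$''. For (b)$\Rightarrow$(a) one also needs that every $\alpha_n\colon A_n\to\Sigma A_1$ with $A_1,A_n\in\CA$ arises as the phantom of some $\mathfrak{d}\in\frakD_\CA$; such a $\mathfrak{d}$ exists in $\CT$ by the axioms (F1)(c) and (F2), and its intermediate terms $A_2,\dots,A_{n-1}$ lie in $\CA$ because $\CA$ is closed under extensions. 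The equivalence (c)$\Leftrightarrow$(d) is identical, with $\mathfrak{d}p$ having phantom $\alpha_n p$.

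The substantive step is (a)$\Leftrightarrow$(c). For (a)$\Rightarrow$(c), I would embed the given $\frakE$-deflation $p\colon B_n\to A_n$ in an $n$-angle of $\frakE$
\[
\mathfrak{e}\colon C_1\xra{i'}C_2\to\cdots\to B_n\xra{p}A_n\xra{\phi}\Sigma C_1,
\]
extracting a $\frakE$-inflation $i'$ and a phantom $\phi\in\Ph(\frakE)$. Choosing $\mathfrak{d}\in\frakD_\CA$ with phantom $\alpha_n$, the pullback $n$-angle $\mathfrak{d}p$ lies in $\frakE$ by the hypothesis $\alpha_n p\in\Ph(\frakE)$ and the above correspondence. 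I would then invoke (F3) together with the cone axiom (F4) to build a morphism of $n$-angles between $\mathfrak{d}p$ and $\mathfrak{e}$ and, combined with (N2)/(N3), produce an $n$-angle in $\frakE$ whose phantom is $(\Sigma i')\alpha_n$; applying hypothesis (a) to the $\frakE$-inflation $i'$ then forces $\alpha_n\in\Ph(\frakE)$. The converse (c)$\Rightarrow$(a) is dual, embedding the given $\frakE$-inflation in an $n$-angle of $\frakE$ and exploiting its $\frakE$-deflation.

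The main obstacle lies precisely in this duality step: unlike the purely formal (a)$\Leftrightarrow$(b) and (c)$\Leftrightarrow$(d), transferring the cancellation property of $\Ph(\frakE)$ between $\frakE$-inflations (on the $\Sigma$-side) and $\frakE$-deflations is not automatic. It requires constructing an auxiliary $n$-angle whose phantom has exactly the composite form needed to feed into the other hypothesis, combining the $n$-angulated axioms (F3) and (F4) with (N2)/(N3) of the almost $n$-exact structure and the ideal closure property (P3) of $\Ph(\frakE)$, all the while keeping the intermediate objects inside the extension-closed subcategory $\CA$.
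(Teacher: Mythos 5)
Your treatment of (a)$\Leftrightarrow$(b) and (c)$\Leftrightarrow$(d), and the reduction to the single implication (a)$\Rightarrow$(c) with (c)$\Rightarrow$(a) following by duality, matches the paper; in particular your remark that extension-closure of $\CA$ is what guarantees every $\alpha_n$ is the phantom of some $n$-angle in $\frakD_\CA$ is correct and worth recording.

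For (a)$\Rightarrow$(c), however, you have fed the wrong inflation into hypothesis (a). You extract $i'\colon C_1\to C_2$ from the $n$-angle $\mathfrak{e}$ completing $p$ and then try to form $(\Sigma i')\alpha_n$. That composite does not type-check: $\alpha_n$ lands in $\Sigma A_1$ while $\Sigma i'$ starts at $\Sigma C_1$, and nothing forces $C_1=A_1$; moreover hypothesis (a) specifically requires an $\frakE$-inflation whose domain is $A_1$, the object with $\alpha_n\colon A_n\to\Sigma A_1$. The inflation you actually need comes from the \emph{other} $n$-angle. Since $\alpha_n p\in\Ph(\frakE)$, the $n$-angle $\mathfrak{d}p\colon A_1\xra{j}B_2\to\cdots\to B_n\xra{\alpha_n p}\Sigma A_1$ lies in $\frakE$, so $j\colon A_1\to B_2$ is an $\frakE$-inflation with the correct domain. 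Applying (F3) to the commuting square $(\alpha_n p)\circ 1_{B_n}=\alpha_n\circ p$, between $\mathfrak{e}$ and the left rotation of $\mathfrak{d}p$, produces a morphism of $n$-angles whose outer square gives $(\Sigma j)\alpha_n=\pm(\Sigma f)\phi$ for some $f\in\CA^{\to}$; as $\phi\in\Ph(\frakE)$ and $\Ph(\frakE)$ is a phantom $\CA$-ideal, $(\Sigma j)\alpha_n\in\Ph(\frakE)$, and then (a) applied to $j$ yields $\alpha_n\in\Ph(\frakE)$. This is exactly the paper's argument; note that you do not need (F4), nor (N2)/(N3), at this point --- only (F3), the rotation axiom (F2), and the ideal closure property (P3) of $\Ph(\frakE)$.
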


\begin{proof}
The equivalences (a)$\Leftrightarrow$(b) and (c)$\Leftrightarrow$(d) are obvious. Moreover, (a)$\Rightarrow$(c) and (c)$\Rightarrow$(a) are dual to each other, so we only prove (a)$\Rightarrow$(c).

{ Let $p:B_n\to A_n$ be a $\frakE$-deflation and let $\alpha_n:A_n\to \Sigma A_1$ be a map such that $A_1,B_n,A_n\in\CA$ and $\alpha_n p\in\Ph(\frakE)$. Completing both $p$ and $\alpha_n p$ to $n$-angles we obtain the following commutative diagram:
\[\xymatrix{
 & A_2\ar[r]\ar[d] &\cdots\ar[r]&A_{n-1}\ar[r]\ar[d]& B_n\ar@{=}[d]\ar[r]^{p}    & A_n\ar[r]^{\psi}\ar[d]^{\alpha_n}   & \Sigma A_2\ar[d]\\
A_1\ar[r]^{i} & B_2\ar[r]&\cdots\ar[r]&B_{n-1}\ar[r]           & B_n\ar[r]^{\alpha_n p}    & \Sigma A_1\ar[r]^{-\Sigma i} &\Sigma B_2          \\
}\]
By hypothesis, $\psi\in\Ph(\frakE)$, $i$ is an $\frakE$-inflation and $(\Sigma i)\alpha_n\in\Ph(\frakE)$. Then (a) implies
$\alpha_n\in\Ph(\frakE)$.
}
\end{proof}

\section{Precovering and preenveloping ideals}\label{section-precovering}

\subsection{Precovers and preenvelopes}

Let $\CI$ be an ideal in $\CA$, and $A$ an object in $\CA$.

A morphism $i:X\to A$ is called an {\it $\CI$-precover} of $A$
if $i\in\CI$ and for any other morphism $i':X'\to A$ in $\CI$, there exists a morphism $g:X'\to X$ such that
the following diagram
\[
\xymatrix{&X'\ar@{.>}[ld]_g\ar[d]^{i'}\\
X\ar[r]^i&A}
\]
commutes.

The notion of an {\it $\CI$-preenvelope} for an object is given dually.

The ideal $\CI$ is a {\it precovering} (respectively, {\it preenveloping}) ideal
if every object from $\CA$ has an $\CI$-precover (respectively, $\CI$-preenvelope).

Since the suspension functor is an equivalence, for every $n\in\Z$,
$i:X\to A$ is an $\CI$-precover of $A$ if and only if $\Sigma^ni:\Sigma^nX\to \Sigma^nA$ is a $\Sigma^n\CI$-precover of
$\Sigma^nA$. Similarly, $j:B\to Y$ is an $\CI$-preenvelope of $B$ if and only if $\Sigma^nj:\Sigma^nB\to \Sigma^nY$ is a $\Sigma^n\CI$-preenvelope of
$\Sigma^nB$.

We extend these notions for phantom $\CA$-ideals in the following way: if $\CE$ is a phantom $\CA$-ideal and $A\in\CA$,
we say that a morphism $\phi:X\to \Sigma A$ is an \textsl{$\CE$-precover} for $\Sigma A$
if $\phi\in\CE$ and all morphisms $\phi':X'\to \Sigma A$ in $\CE$ factorize through $\phi$. Dually,
an \textsl{$\CE$-preenvelope} for an object $B$ in $\CA$ is a morphism
$\phi:B\to \Sigma Y$ which lies in $\CE$ such that every other morphism $\phi':B\to \Sigma Y'$ from $\CE$
factorizes through $\phi$. The phantom
$\CA$-ideal $\CE$ is \textsl{precovering} (respectively, \textsl{preenveloping})
if every object from $\Sigma \CA$ (respectively, $\CA$) has an $\CE$-precover (respectively, $\CE$-preenvelope).

Let $\frakE$ be an almost $n$-exact structure for $\CA$.
We say that a morphism $f:X\to A$ in $\CA$ is
\textsl{$\frakE$-projective} if $f$ is projective with respect to  all $n$-angles in $\frakE$,
that is, $f$ factorizes through all $\frakE$-deflations $A_{n-1}\to A$.
Dually, $g:A\to Y$ is \textsl{$\frakE$-injective} in $\CA$ if $g$ is injective with respect to  all $n$-angles in $\frakE$, that is,
$g$ factorizes through all $\frakE$-inflations $A\to A_2$.
We denote by $\Eproj$ ($\Einj$) the class of all $\frakE$-projective
(respectively, $\frakE$-injective) morphisms.

\begin{lemma}\label{basic-CI-inj}
Let $\frakE$ be an almost $n$-exact structure for $\CA$.
\begin{enumerate}[{\rm (a)}]
 \item A morphism $f:X\to A$ in $\CA$ is $\frakE$-projective if and only if $\Ph(\frakE) f=0$.
 \item  A morphism $g:A\to Y$ in $\CA$ is $\frakE$-injective if and only if $(\Sigma g) \Ph(\frakE)=0$.
 \item $\Eproj$ and $\Einj$ are ideals in $\CA$.
 \item $\Sigma^m\frakE\text{-}\mathrm{proj}=\Sigma^m(\Eproj)$ and $\Sigma^m\frakE\text{-}\mathrm{inj}=\Sigma^m(\Einj)$ for all $m\in\Z$,
 where $\Sigma^m\frakE$ is viewed as an almost $n$-exact structure relative to the full subcategory $\Sigma^m\CA$.
\end{enumerate}
\end{lemma}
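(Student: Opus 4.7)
The plan is to prove (a) and (b) as the core content---each is a translation between the factorization formulation of $\frakE$-(co)projectivity and a vanishing condition---and then deduce (c) and (d) by formal manipulations using the closure properties of $\Ph(\frakE)$ recorded in Proposition~\ref{pseudo-ph-vs-wpc} together with the fact that $\Sigma$ is an automorphism. The exactness of $n$-angles in $\foa$, as encoded in Remark~\ref{2.1.1}(2), is what drives both (a) and (b).

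For (a), I fix $f:X\to A$ and observe that the $\frakE$-deflations landing at $A$ are precisely the penultimate arrows of $n$-angles $\mathfrak{d}:A_1\xrightarrow{\alpha_1}\cdots\xrightarrow{\alpha_{n-1}}A\xrightarrow{\phi}\Sigma A_1$ in $\frakE$, and the $\frakE$-phantoms with source $A$ are exactly the corresponding $\phi$. Applying Remark~\ref{2.1.1}(2)(ii) to $\mathfrak{d}$ at position $n$ gives the equivalence ``$f$ factors through $\alpha_{n-1}$ $\Leftrightarrow$ $\phi f=0$'', and quantifying over all such $\mathfrak{d}$ yields (a).

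For (b), fix $g:A\to Y$ and an $n$-angle $\mathfrak{d}:A\xrightarrow{\alpha_1}A_2\to\cdots\to A_n\xrightarrow{\alpha_n}\Sigma A$ in $\frakE$. The direction ``$g$ factors through $\alpha_1\Rightarrow(\Sigma g)\alpha_n=0$'' is immediate from $(\Sigma\alpha_1)\alpha_n=0$ (Remark~\ref{2.1.1}(2)(i)). For the converse I would pass to the right rotation $\mathfrak{d}':\Sigma^{-1}A_n\xrightarrow{(-1)^n\Sigma^{-1}\alpha_n}A\xrightarrow{\alpha_1}A_2\to\cdots\to A_n$, obtained by iterating (F2). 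The key observation is that although $\mathfrak{d}'$ need not lie in $\frakE$, it still lies in $\foa$, and exactness is a property of $\foa$-angles rather than of the substructure $\frakE$. Applying Remark~\ref{2.1.1}(2)(ii) to $\mathfrak{d}'$ at position $2$ then shows that $(\Sigma g)\alpha_n=0$ forces $g=h\alpha_1$ for some $h$; ranging over all such $\mathfrak{d}$ yields $g\in\Einj \Leftrightarrow (\Sigma g)\Ph(\frakE)=0$. Managing this rotation (and its sign) is the only mildly subtle step in the whole lemma; everything else is bookkeeping.

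Parts (c) and (d) then follow formally. For (c), closure of $\Eproj$ under sums is bilinearity of composition; closure under pre-composition with $h\in\CA^\to$ uses $\phi(fh)=(\phi f)h=0$, and closure under post-composition uses $\phi(hf)=(\phi h)f$ with $\phi h\in\Ph(\frakE)$ via Proposition~\ref{pseudo-ph-vs-wpc}. The argument for $\Einj$ is symmetric, invoking $(\Sigma h)\phi\in\Ph(\frakE)$. For (d), the identity $\Ph(\Sigma^m\frakE)=\Sigma^m\Ph(\frakE)$---valid because $\Sigma^m$ bijects the $n$-angles of $\frakE$ with those of $\Sigma^m\frakE$---combined with (a) gives $f\in(\Sigma^m\frakE)\proj \Leftrightarrow \Sigma^m\Ph(\frakE)\cdot f=0 \Leftrightarrow \Ph(\frakE)\cdot\Sigma^{-m}f=0 \Leftrightarrow f\in\Sigma^m(\Eproj)$, and the injective case is analogous via (b).
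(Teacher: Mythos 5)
Your proof is correct and takes essentially the same approach as the paper: parts (a) and (b) via the exactness of $n$-angles recorded in Remark~\ref{2.1.1}(2) (with (b) spelled out through rotation, which is what the paper leaves implicit when it says ``dual''), and parts (c), (d) by formal manipulation using Proposition~\ref{pseudo-ph-vs-wpc} and the fact that $\Sigma$ is an equivalence. One small mis-citation: in (a), the implication from $\phi f=0$ to $f$ factoring through $\alpha_{n-1}$ comes from Remark~\ref{2.1.1}(2)(iii) (morphisms \emph{into} $A_n$), not (ii), while the converse uses (i)---you do invoke (ii) correctly in (b) after rotating.
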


\begin{proof}
We only prove (a), (b) is dual, and (c) and (d) are easy.

The if part of (a).
Assume  $\Ph(\frakE) f=0$. For any $\frakE$-deflation $\alpha_{n-1}:A_{n-1}\to A$, we choose an $n$-angle
\[
\xymatrix@=0.5cm{A_1\ar[r]&\cdots\ar[r]&A_{n-1}\ar[r]^{\alpha_{n-1}}&A\ar[r]^\alpha&\Sigma A_1}
\]
in $\frakE$. In particular, $\alpha\in\Ph(\frakE)$, and hence
 $\alpha f=0$ by assumption. Thus $f$ factors through $\alpha_{n-1}$  by Remark \ref{2.1.1}(2)(iii).

 The only if part of (a). Assume that $f:X\to A$ is $\frakE$-projective. For any $\alpha: A\to \Sigma A_1\in\Ph(\frakE)$,
  we choose an $n$-angle
\[
\xymatrix@=0.5cm{A_1\ar[r]&\cdots\ar[r]&A_{n-1}\ar[r]^{\alpha_{n-1}}&A\ar[r]^-\alpha&\Sigma A_1}
\]
in $\frakE$. By the definition of $\frakE$-projective morphisms, there exists a morphism $g:X\to A_{n-1}$ such that the following diagram
\[
\xymatrix@=0.7cm{&&&X\ar@{.>}[ld]_g\ar[d]^f&\\
A_1\ar[r]&\cdots\ar[r]&A_{n-1}\ar[r]^-{\alpha_{n-1}}&A\ar[r]^-\alpha&\Sigma A_1}
\]
is commutative.
By Remark \ref{2.1.1}(2)(i), $\alpha f=\alpha \alpha_{n-1} g=0$. Thus $\Ph(\frakE) f=0$.
\end{proof}

The above mentioned connection is presented in the following results.

\begin{lemma}\label{Icov-vs-Iinjenv} Let $\frakE$ be an almost $n$-exact structure  for $\CA$, and let
  $$\mathfrak{d}:A_1\overset{\alpha_1}\to A_2\overset{\alpha_2}\to \cdots\to A_{n-1}\overset{\alpha_{n-1}}\to A_n\overset{\alpha_n}\to \Sigma A_1$$
   be an $n$-angle in $\frakE$.
  \begin{enumerate}[{\rm (1)}]
\item If  $\alpha_n$ is a $\Ph(\frakE)$-precover for $\Sigma A_1$ and $d:X\to A_2$ is a morphism
 such that $\alpha_2d=0$, then $d\in\Einj$. In particular $\alpha_1$ is a $\Einj$-preenvelope of $A_1$.

 As a consequence, the following statements are equivalent:
 \begin{enumerate}[{\rm (a)}]
   \item $\alpha_n$ is a $\Ph(\frakE)$-precover for $\Sigma A_1$.
   \item  $\alpha_1$ is a $\Einj$-preenvelope of $A_1$.
   \item $\alpha_1$ is $\frakE$-injective.
 \end{enumerate}

\item If  $\alpha_n$ is a $\Ph(\frakE)$-preenvelope for $A_n$ and $d:A_{n-1}\to X$ is a morphism such that
$d\alpha_{n-2}=0$, then $d\in\Eproj$. In particular $\alpha_{n-1}$ is a $\Eproj$-precover for $A_n$.

As a consequence, the following statements are equivalent:
 \begin{enumerate}[{\rm (a)}]
   \item $\alpha_n$ is a $\Ph(\frakE)$--preenvelope for $A_n$.
   \item  $\alpha_{n-1}$ is a $\Eproj$-precover for $A_n$.
   \item $\alpha_{n-1}$ is $\frakE$-projective.
 \end{enumerate}
\end{enumerate}
\end{lemma}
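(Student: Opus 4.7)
The plan is to combine Lemma~\ref{basic-CI-inj}, which recasts $\frakE$-injectivity of a morphism $d$ as the orthogonality condition $(\Sigma d)\Ph(\frakE)=0$, with the exactness properties of $n$-angles recorded in Remark~\ref{2.1.1}. I will prove part~(1) in detail; part~(2) then follows by the evident dualization, using Remark~\ref{2.1.1}(2)(ii) in place of (iii) and swapping the roles of $\alpha_1,\alpha_{n-1}$, of inflation versus deflation, and of injective versus projective morphisms.

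For the first assertion of~(1), assume $\alpha_2 d=0$. First I would apply Remark~\ref{2.1.1}(2)(iii) to $\mathfrak{d}$ to produce a factorization $d=\alpha_1 g$ for some $g:X\to A_1$. To verify $d\in\Einj$ via Lemma~\ref{basic-CI-inj}(b), pick an arbitrary $\beta:Z\to\Sigma X$ in $\Ph(\frakE)$; axiom~(P3) for the phantom $\CA$-ideal produces $(\Sigma g)\beta:Z\to\Sigma A_1$ again in $\Ph(\frakE)$, and the precover hypothesis on $\alpha_n$ factors it as $(\Sigma g)\beta=\alpha_n h$ for some $h:Z\to A_n$. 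Therefore $(\Sigma d)\beta=(\Sigma\alpha_1)(\Sigma g)\beta=(\Sigma\alpha_1)\alpha_n h=0$ by Remark~\ref{2.1.1}(2)(i). Specialising to $d=\alpha_1$ (which satisfies $\alpha_2\alpha_1=0$) yields $\alpha_1\in\Einj$; since $\alpha_1$ is itself an $\frakE$-inflation, the definition of $\frakE$-injective morphism then forces every $i\in\Einj$ with source $A_1$ to factor through $\alpha_1$, so $\alpha_1$ is indeed an $\Einj$-preenvelope of $A_1$.

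For the chain of equivalences, (a)$\Rightarrow$(b) is precisely the preceding paragraph and (b)$\Rightarrow$(c) is tautological. For (c)$\Rightarrow$(a), let $\beta:Z\to\Sigma A_1$ be any morphism in $\Ph(\frakE)$; Lemma~\ref{basic-CI-inj}(b) together with $\alpha_1\in\Einj$ gives $(\Sigma\alpha_1)\beta=0$. Since every $n$-angle is exact, applying $\CT(Z,-)$ to $\mathfrak{d}$ produces a long exact sequence in which exactness at $\CT(Z,\Sigma A_1)$ forces $\beta=\alpha_n h$ for some $h:Z\to A_n$; combined with $\alpha_n\in\Ph(\frakE)$ by construction, this is exactly the precover property. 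The only potential obstacle I foresee is routine bookkeeping---checking that the relevant objects remain in $\CA$ so that~(P3) and the factorization property defining $\Einj$ apply legitimately---which is automatic from the standing hypothesis $\mathfrak{d}\in\frakE\subseteq\frakD_\CA$.
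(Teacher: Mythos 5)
Your proof is correct and follows essentially the same route as the paper: factor $d$ through $\alpha_1$ via the exactness of the $n$-angle, push an arbitrary phantom through $\Sigma g$ using the phantom-ideal axioms, invoke the precover property of $\alpha_n$, and conclude $(\Sigma d)\Ph(\frakE)=0$ so that Lemma~\ref{basic-CI-inj} applies; the equivalences (a)--(c) then close up exactly as you describe. (Incidentally your citation of Remark~\ref{2.1.1}(2)(iii) for the factorization $d=\alpha_1 g$ is the correct one; the published proof cites (2)(ii) at that point, which is a misprint.)
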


\begin{proof} We only prove (1), and (2) is dual.

 Let $\psi:Y\to \Sigma X$ be a morphism in $\Ph(\frakE)$. 
Since $\alpha_2d=0$, by Remark \ref{2.1.1}(2)(ii) there exists a morphism $h:X\to A_1$ such that  $d=\alpha_1h$. Now $(\Sigma h)\psi:Y\to \Sigma A_1$ is in $\Ph(\frakE)$
 because $\Ph(\frakE)$ is a phantom $\CA$-ideal. Since $\alpha_n$
 is a $\Ph(\frakE)$-precover for $\Sigma A_1$, 
 there exists a morphism $k:Y\to A_n$ such that  $(\Sigma h)\psi=\alpha_n k$. That is, we obtain the following commutative diagram:
 \[ \xymatrix{ & X\ar@{-->}[dl]_{h}\ar[d]^{d} && & & Y \ar@{-->}[dl]_{k}\ar[r]^{\psi} & \Sigma X\ar@{-->}[dl]_{\Sigma h}\ar[d]^{\Sigma d}   \\
 A_1 \ar[r]^{\alpha_1}& A_2\ar[r]^{\alpha_2}&\cdots\ar[r] & A_{n-1}\ar[r]^{\alpha_{n-1}} & A_n \ar[r]^{\alpha_n} & \Sigma A_1\ar[r]^{\Sigma \alpha_1} & \Sigma A_2
 .} \]
 Now $(\Sigma d)\psi=(\Sigma \alpha_1)(\Sigma h)\psi=(\Sigma \alpha_1)\alpha_n k=0$ by Remark \ref{2.1.1}(2)(i). Therefore $(\Sigma d)\Ph(\frakE)=0$, and so $d\in\Einj$ by Lemma \ref{basic-CI-inj}(2).

In particular,  since $\alpha_2\alpha_1=0$, $\alpha_1$ is $\frakE$-injective by the first assertion.
 Moreover, if
 $f':A_1\to B'$ is a morphism in $\Einj$, by the definition of $\frakE$-injective morphisms $f'$ has to factor through $\alpha_1$. Thus $\alpha_1$ is a $\Einj$-preenvelope of $A_1$.

Finally, if $\alpha_1\in\Einj$, then for every map $\psi:X\to \Sigma A_1$ from $\Ph(\frakE)$ we
have $(\Sigma \alpha_1)\psi=0$  by Lemma \ref{basic-CI-inj}(2). Hence $\psi$ factors through $\alpha_n$ by Remark \ref{2.1.1}(2)(ii), and thus $\alpha_n$ is a $\Ph(\frakE)$-precover for $\Sigma A_1$.
 \end{proof}

\begin{definition}\label{def-suficiente}
Let $\frakE$ be an almost $n$-exact structure for $\CA$. We say that there are \textsl{enough $\frakE$-injective morphisms}
if for every object $A$ there
 exists a $\frakE$-inflation $f:A\to E$ which is $\frakE$-injective.

Dually, there are \textsl{enough $\frakE$-projective morphisms} if for every object $C$ there
 exists a $\frakE$-deflation $g:P\to C$ which is $\frakE$-projective.
\end{definition}

Now we show that the existence of enough $\frakE$-injective (respectively, $\frakE$-projective) morphisms is
equivalent to the precovering (respectively, preenveloping) property of the phantom $\CA$-ideal $\Ph(\frakE)$, and it recover \cite[Theorem 3.1.5]{BM} in case $n=3$.

\begin{theorem}\label{Th-procov-vs-Iinj}
Let $\frakE$ be an almost $n$-exact structure for $\CA$.
	\begin{enumerate}[{\rm (1)}]
\item The following are equivalent: \begin{enumerate}[{\rm (a)}]
\item  there are enough $\frakE$-injective morphisms;
 \item $\Ph(\frakE)$ is a precovering phantom $\CA$-ideal.

 \end{enumerate}
\item The following are equivalent:  \begin{enumerate}[{\rm (a)}] \item there are enough $\frakE$-projective morphisms;
 \item $\Ph(\frakE)$ is a preenveloping phantom $\CA$-ideal.
 \end{enumerate}
\end{enumerate}
\end{theorem}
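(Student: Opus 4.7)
The plan is to deduce both parts of the theorem as immediate consequences of Lemma \ref{Icov-vs-Iinjenv}, combined with the bijective correspondence between $n$-angles in $\frakE$ and morphisms in $\Ph(\frakE)$ guaranteed by Proposition \ref{pseudo-ph-vs-wpc}. I will prove (1) in detail; part (2) follows by the dual argument using Lemma \ref{Icov-vs-Iinjenv}(2).

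For (1)(a)$\Rightarrow$(b), let $A_1\in\CA$ be arbitrary; I must exhibit a $\Ph(\frakE)$-precover of $\Sigma A_1$. By hypothesis, there is an $\frakE$-inflation $\alpha_1:A_1\to A_2$ which is $\frakE$-injective. Embed $\alpha_1$ into an $n$-angle in $\frakE$
\[
A_1\xra{\alpha_1} A_2\xra{\alpha_2}\cdots\xra{\alpha_{n-1}} A_n\xra{\alpha_n}\Sigma A_1.
\]
Since $\alpha_1\in\Einj$, the equivalence (c)$\Leftrightarrow$(a) in Lemma \ref{Icov-vs-Iinjenv}(1) yields that $\alpha_n$ is a $\Ph(\frakE)$-precover for $\Sigma A_1$. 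As every object of $\Sigma\CA$ is of the form $\Sigma A_1$ for some $A_1\in\CA$, this shows that $\Ph(\frakE)$ is precovering.

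For (1)(b)$\Rightarrow$(a), let $A_1\in\CA$ and choose a $\Ph(\frakE)$-precover $\alpha_n:A_n\to\Sigma A_1$. By Proposition \ref{pseudo-ph-vs-wpc}, the morphism $\alpha_n$ arises as the phantom of some $n$-angle in $\frakE$:
\[
A_1\xra{\alpha_1} A_2\xra{\alpha_2}\cdots\xra{\alpha_{n-1}} A_n\xra{\alpha_n}\Sigma A_1.
\]
Applying now (a)$\Rightarrow$(c) in Lemma \ref{Icov-vs-Iinjenv}(1), we conclude that $\alpha_1$ is $\frakE$-injective, and by construction it is an $\frakE$-inflation starting in $A_1$. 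Since $A_1$ was arbitrary, there are enough $\frakE$-injective morphisms.

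There is essentially no serious obstacle here: the entire content of the theorem is packaged in Lemma \ref{Icov-vs-Iinjenv}, whose equivalences immediately translate the object-level statement (``there exists an $\frakE$-injective $\frakE$-inflation out of every object'') into the morphism-level statement (``every object of $\Sigma\CA$ has a $\Ph(\frakE)$-precover''). The one fine point worth flagging is that in the (b)$\Rightarrow$(a) direction one must invoke Proposition \ref{pseudo-ph-vs-wpc} to guarantee that the $\Ph(\frakE)$-precover $\alpha_n$ actually realizes as the phantom of an $n$-angle lying in $\frakE$, rather than merely in $\frakD_\CA$; without this step Lemma \ref{Icov-vs-Iinjenv}(1) would not be applicable. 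Part (2) is obtained by replacing Lemma \ref{Icov-vs-Iinjenv}(1) with Lemma \ref{Icov-vs-Iinjenv}(2), $\frakE$-inflations with $\frakE$-deflations, and $\Ph(\frakE)$-precovers of $\Sigma A_1$ with $\Ph(\frakE)$-preenvelopes of $A_n$.
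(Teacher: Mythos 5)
Your proof is correct and follows the same route as the paper: both directions are obtained by invoking Lemma \ref{Icov-vs-Iinjenv}(1), translating between $\frakE$-injective $\frakE$-inflations and $\Ph(\frakE)$-precovers. The appeal to Proposition \ref{pseudo-ph-vs-wpc} in the (b)$\Rightarrow$(a) direction is superfluous, since by the very definition of $\Ph(\frakE)$ every morphism in it is the phantom of some $n$-angle already lying in $\frakE$, which is all that is needed to apply the lemma.
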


\begin{proof}

(a)$\Rightarrow$(b) Let $A\in\CA$. By (a), there exists an $n$-angle
$$A\overset{f}\to A_2\to \cdots \to A_{n}\overset{\phi}\to \Sigma A$$ in $\frakE$ such that $f$ is $\frakE$-injective. By  Lemma \ref{Icov-vs-Iinjenv}(1),
 $\phi$ is a
$\Ph(\frakE)$-precover for $\Sigma A$.

(b)$\Rightarrow$(a)
Suppose that $\Ph(\frakE)$ is a precovering phantom $\CA$-ideal. Let $A\in\CA$.
If $\phi:A_n\to \Sigma A$ is a $\Ph(\frakE)$-precover, we choose an $n$-angle
$A\overset{f}\to A_2\to \cdots \to A_n\overset{\phi}\to \Sigma A$ in $\frakE$. Using Lemma \ref{Icov-vs-Iinjenv}(1), we
conclude that $f$ is a $\frakE$-injective $\frakE$-inflation.
\end{proof}

In the following we will present a method to construct almost $n$-exact structures with enough injective/projective morphisms.
We start with an construction of  almost $n$-exact structures for $\CA$.

Let $\CI$ be an ideal in $\CA$. Then $$\mathcal{E}^{\CI}=\{\varphi\in\CT(\CA,\Sigma \CA)\mid (\Sigma i)\varphi=0\mbox{ for any }i\in\CI\}$$ is a phantom $\CA$-ideal,
hence the class $\frakE^\CI:=\frakD_{\CA}(\mathcal{E}^{\CI})$ is an almost $n$-exact structure for $\CA$ by Proposition \ref{pseudo-ph-vs-wpc}.
It is easy to see that $\frakE^\CI$ consists  of all $n$-angles
$A_1\to A_2\to\cdots \to A_n\overset{\varphi}\to \Sigma A_1$ from $\mathfrak{D}_\CA$ with the property that all morphisms from $\CI$ are $\frakE^\CI$-injective, that is,
 $\CI\subseteq \frakE^\CI\mbox{-inj}$.

Dually, if we consider the phantom $\CA$-ideal $$\mathcal{E}_{\CI}=\{\varphi\in\CT(\CA,\Sigma \CA)\mid \varphi i=0\mbox{ for any }i\in\CI\},$$
we obtain an almost $n$-exact structure $\frakE_\CI:= \frakD_{\CA}(\mathcal{E}_{\CI})$ consisting of all $n$-angles
$A_1\to A_2\to\cdots \to A_n\overset{\varphi}\to \Sigma A_1$ from $\mathfrak{D}_\CA$ with the property that $\CI\subseteq\frakE_\CI\mbox{-proj}$.

\begin{proposition}\label{prop1-suficiente}
Let $\frakE$ be an almost $n$-exact structure for $\CA$.
\begin{enumerate}[{\rm (1)}]
\item If there are enough $\frakE$-injective morphisms, then $\frakE=\frakE^{\frakE\textrm{-}\mathrm{inj}}$.
\item If there are enough $\frakE$-projective morphisms, then $\frakE=\frakE_{\frakE\textrm{-}\mathrm{proj}}$.
\end{enumerate}
\end{proposition}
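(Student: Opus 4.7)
The plan is to prove both equalities by double inclusion; part (2) follows dually from the argument for part (1), so I concentrate on (1), namely $\frakE=\frakE^{\frakE\textrm{-}\mathrm{inj}}$. The inclusion $\frakE\subseteq\frakE^{\frakE\textrm{-}\mathrm{inj}}$ is immediate and does not even require the hypothesis: for $\mathfrak{d}\in\frakE$ with $\frakE$-phantom $\varphi\in\Ph(\frakE)$, Lemma~\ref{basic-CI-inj}(b) yields $(\Sigma g)\Ph(\frakE)=0$, hence $(\Sigma g)\varphi=0$, for every $g\in\frakE\textrm{-}\mathrm{inj}$, which is exactly the defining membership condition for $\frakE^{\frakE\textrm{-}\mathrm{inj}}$.

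For the converse, let $\mathfrak{d}:A_1\xra{\alpha_1}A_2\xra{}\cdots\xra{}A_n\xra{\varphi}\Sigma A_1$ be an $n$-angle in $\frakE^{\frakE\textrm{-}\mathrm{inj}}$. Using the hypothesis of enough $\frakE$-injective morphisms, pick an $\frakE$-injective $\frakE$-inflation $f:A_1\to E$ and embed it in an $n$-angle $\mathfrak{e}:A_1\xra{f}E\xra{}\cdots\xra{}E_n\xra{\psi}\Sigma A_1$ in $\frakE$, so that $\psi\in\Ph(\frakE)$. Since $f\in\frakE\textrm{-}\mathrm{inj}$ and $\mathfrak{d}\in\frakE^{\frakE\textrm{-}\mathrm{inj}}$, the defining condition forces $(\Sigma f)\varphi=0$. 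The crucial step is now to factor $\varphi$ through $\psi$: each $n$-angle is exact by \cite[Proposition~2.5]{GKO}, so applying the covariant functor $\mathcal{T}(A_n,-)$ to an appropriate rotation of $\mathfrak{e}$ (granted by (F2)) produces exactness of
\[\mathcal{T}(A_n,E_n)\xra{\psi_*}\mathcal{T}(A_n,\Sigma A_1)\xra{(\Sigma f)_*}\mathcal{T}(A_n,\Sigma E).\]
From $(\Sigma f)\varphi=0$, $\varphi$ lies in $\ker(\Sigma f)_*=\im\psi_*$, whence $\varphi=\psi h$ for some $h:A_n\to E_n$. Finally, $\Ph(\frakE)$ is a phantom $\CA$-ideal by Proposition~\ref{pseudo-ph-vs-wpc}, so condition (P3) gives $\varphi=\psi h\in\Ph(\frakE)$; equivalently, $\mathfrak{d}\in\frakE$.

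Part (2) is dual. Given $\mathfrak{d}$ with phantom $\varphi$ in $\frakE_{\frakE\textrm{-}\mathrm{proj}}$, pick an $\frakE$-projective $\frakE$-deflation $p:P\to A_n$ sitting in an $n$-angle $P_1\to\cdots\to P\xra{p}A_n\xra{\psi}\Sigma P_1$ of $\frakE$. The membership condition forces $\varphi p=0$, and applying $\mathcal{T}(-,\Sigma A_1)$ to this $n$-angle furnishes the factorization $\varphi=k\psi$ for some $k:\Sigma P_1\to\Sigma A_1$. Since $\Sigma$ is an automorphism of $\mathcal{T}$, $k=\Sigma g$ for a unique $g\in\CA^{\to}$, and (P3) then yields $\varphi=(\Sigma g)\psi\in\Ph(\frakE)$.

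I do not anticipate genuine obstacles; the whole argument reduces to the factorization machinery of exact $n$-angles combined with the closure properties (P3) of a phantom $\CA$-ideal. The main point requiring care is the bookkeeping of rotations (and attendant signs $(-1)^n$) needed to place $\Sigma f$, respectively $p$, in the correct endpoint position of the long exact Hom sequence so that the desired factorization can be read off.
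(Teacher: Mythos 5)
Your proof is correct, and it reaches the same inclusion $\frakE^{\frakE\text{-inj}}\subseteq\frakE$ by a slightly different route than the paper. The paper starts from the observation that the $\frakE$-injective $\frakE$-inflation $\alpha:A_1\to E$ lies in $\frakE^{\CI}$-inj and therefore factors through the $\frakE^{\CI}$-inflation $\alpha_1$ of $\mathfrak{d}$; this square is then completed to a morphism of $n$-angles using axiom (F3) (with identities at $A_1$ and $\Sigma A_1$), and (N2) pulls the conclusion $\mathfrak{d}\in\frakE$ back along the resulting map $A_n\to E_n$. You instead work entirely at the level of phantom maps: from $(\Sigma f)\varphi=0$ and exactness of the (rotated) $n$-angle $\mathfrak{e}$ you get $\varphi=\psi h$, and then you close up with (P3) for the phantom $\CA$-ideal $\Ph(\frakE)$ together with the correspondence $\frakE=\frakD_\CA(\Ph(\frakE))$ from Proposition~\ref{pseudo-ph-vs-wpc}. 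The two are two faces of the same coin — your $h$ is precisely the $n$-th component of the paper's morphism of $n$-angles — but your version is arguably leaner, replacing the use of (F3) by the long exact Hom sequence and avoiding the explicit construction of the intermediate vertical morphisms $\varphi_2,\dots,\varphi_{n-1}$. Your closing remark about the sign $(-1)^n$ from the rotation is well placed; it is harmless since it does not change the kernel of $(\Sigma f)_*$. For part (2), the argument is a correct dualization, and your observation that $k=\Sigma g$ for some $g\in\CA^\to$ (because $\Sigma$ is an automorphism and $P_1,A_1\in\CA$) is exactly the step needed for (P3) to apply.
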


\begin{proof} We only prove (1), and (2) is dual.

 Take $\CI:=\frakE\textrm{-inj}$. By Lemma \ref{basic-CI-inj}(b), $\frakE\subseteq\frakE^\CI$. Thus it is enough to prove the inclusion $\frakE^\CI\subseteq \frakE$.

Let $$\xymatrix@=0.5cm{           
  \mathfrak{d}:& A_1\ar[r] & A_2\ar[r]& \cdots \ar[r]& A_{n-1}\ar[r]   & A_n\ar[r]  & \Sigma A_1   }$$
   be an $n$-angle in $\frakE^\CI$. Let $\alpha:A_1\to B_2$ be a $\frakE$-injective $\frakE$-inflation. Since $\alpha\in\CI\subseteq \frakE^\CI\mbox{-inj}$,
 we can construct a commutative diagram
\[\xymatrix@=0.5cm{           
  A_1\ar[r]\ar@{=}[d] & A_2\ar[r]\ar[d]& \cdots \ar[r]& A_{n-1}\ar[r] \ar[d]   & A_n\ar[r]  \ar[d]   & \Sigma A_1 \ar@{=}[d]\\
   A_1\ar[r]^{\alpha} &B_2\ar[r]& \cdots \ar[r]& B_{n-1}\ar[r]    & B_n\ar[r]     & \Sigma A_1         }\]
where the horizontal lines are $n$-angles in $\mathfrak{D}_\CA$. Since the below $n$-angle is in $\frakE$, the top one is also in $\frakE$ by the condition (N2). Thus $\frakE^\CI\subseteq \frakE$.
\end{proof}

It is easy to see that if $\frakF$ and $\frakE$ are almost $n$-exact structures for $\CA$ such that $\frakF\subseteq \frakE$, then $\frakE\inj\subseteq
\frakF\inj$ and $\frakE\proj\subseteq \frakF\proj$. We can use the previous proposition to prove a converse for this implication.

\begin{corollary}\label{cor-prop1-suficiente}
Let $\frakE$ and $\frakF$ be almost $n$-exact structures for $\CA$.
\begin{enumerate}[{\rm (1)}]
\item Suppose that there are enough $\frakE$-injective morphisms. Then $\frakF\subseteq \frakE$
if and only if $\frakE\inj\subseteq \frakF\inj$.
\item Suppose that there are enough $\frakE$-projective morphisms. Then $\frakF\subseteq \frakE$
if and only if $\frakE\proj\subseteq \frakF\proj$.
\end{enumerate}
\end{corollary}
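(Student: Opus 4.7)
The plan is to prove part (1); part (2) follows by a dual argument using $\frakE$-projectives in place of $\frakE$-injectives. The ``only if'' direction is already observed in the paragraph immediately preceding the statement, and it requires no hypothesis on $\frakE$: if $\frakF\subseteq\frakE$, then every $\frakF$-inflation is an $\frakE$-inflation, so any morphism injective with respect to all $\frakE$-inflations is in particular injective with respect to all $\frakF$-inflations. So the content lies in the ``if'' direction.

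For the ``if'' direction, I will invoke Proposition~\ref{prop1-suficiente}(1), which under the hypothesis of enough $\frakE$-injective morphisms gives the identification $\frakE=\frakE^{\frakE\textrm{-}\mathrm{inj}}$. Concretely, this reduces the problem to showing $\frakF\subseteq\frakE^{\frakE\textrm{-}\mathrm{inj}}$, that is, that every phantom $\alpha_n$ of an $n$-angle in $\frakF$ satisfies $(\Sigma i)\alpha_n=0$ for every $i\in\frakE\textrm{-}\mathrm{inj}$.

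Fix such an $n$-angle $\mathfrak{d}\in\frakF$ with phantom $\alpha_n\in\Ph(\frakF)$, and let $i\in\frakE\textrm{-}\mathrm{inj}$. By assumption $\frakE\textrm{-}\mathrm{inj}\subseteq\frakF\textrm{-}\mathrm{inj}$, so $i\in\frakF\textrm{-}\mathrm{inj}$, and Lemma~\ref{basic-CI-inj}(b) applied to $\frakF$ yields $(\Sigma i)\Ph(\frakF)=0$; in particular $(\Sigma i)\alpha_n=0$. Hence $\alpha_n\in\mathcal{E}^{\frakE\textrm{-}\mathrm{inj}}$ and $\mathfrak{d}\in\frakE^{\frakE\textrm{-}\mathrm{inj}}=\frakE$, as required.

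There is essentially no obstacle here: the whole argument is a two-step assembly, where the hypothesis on $\frakE$ is used only to invoke Proposition~\ref{prop1-suficiente} and rewrite $\frakE$ as $\frakE^{\frakE\textrm{-}\mathrm{inj}}$, and then the characterization of injective morphisms via $(\Sigma i)\Ph(-)=0$ in Lemma~\ref{basic-CI-inj}(b) finishes the inclusion. Part (2) is obtained by dualizing: replace $\frakE^{(-)}$ by $\frakE_{(-)}$, use Proposition~\ref{prop1-suficiente}(2) to write $\frakE=\frakE_{\frakE\textrm{-}\mathrm{proj}}$, and use Lemma~\ref{basic-CI-inj}(a) to translate projectivity of $f$ into the condition $\Ph(-)f=0$.
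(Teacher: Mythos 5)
Your proof is correct and takes essentially the same approach as the paper: both reduce the ``if'' direction to the identity $\frakE=\frakE^{\frakE\textrm{-}\mathrm{inj}}$ from Proposition~\ref{prop1-suficiente} and then verify, via Lemma~\ref{basic-CI-inj}(b) and the hypothesis $\frakE\inj\subseteq\frakF\inj$, that $\Ph(\frakF)\subseteq\mathcal{E}^{\frakE\textrm{-}\mathrm{inj}}$. The only cosmetic difference is that the paper passes through the intermediate structure $\frakE^{\frakF\textrm{-}\mathrm{inj}}$ whereas you combine the two inclusions into one step.
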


\begin{proof}
We only prove (1).  Suppose $\frakE\inj\subseteq \frakF\inj$. Since $\Sigma(\frakF\inj)\Ph(\frakF)=0$, we have $\Ph(\frakF)\subseteq \mathcal{E}^{\frakF\inj}$,
which implies that $\frakF= \frakD_{\CA}(\Ph(\frakF))\subseteq\frakD_{\CA}(\mathcal{E}^{\frakF\inj})=\frakE^{\frakF\inj}$. Then  $\frakF\subseteq \frakE^{\frakF\inj}\subseteq \frakE^{\frakE\inj}=\frakE$ by Proposition \ref{prop1-suficiente}.
\end{proof}

The following proposition shows that we can construct an almost $n$-exact structure with enough injective (respectively, projective) morphisms in terms of
a certain preenveloping (respectively, precovering) ideal, which extends \cite[Proposition 3.1.8]{BM}.

\begin{proposition}\label{subclasses-enough-inj}
Let $\frakF\subseteq \frakE$ be almost $n$-exact structures for $\CA$.
\begin{enumerate}[{\rm (1)}]
\item If there are enough $\frakE$-injective morphisms, then the following are equivalent:  \begin{enumerate}[{\rm (a)}]
\item there are enough $\frakF$-injective morphisms;
\item  there exists a preenveloping ideal $\CI$ in $\CA$ such that $\frakE\textrm{-}\mathrm{inj}\subseteq \CI$, and $\frakF=\frakE^\CI$.
\end{enumerate}

\noindent Under these conditions, $\frakF\inj=\CI$.
\item If there are enough $\frakE$-projective morphisms, then the following are equivalent: \begin{enumerate}[{\rm (a)}]
\item there are enough $\frakF$-projective morphisms;
\item  there exists a precovering ideal $\CI$ in $\CA$ such that $\frakE\textrm{-}\mathrm{proj}\subseteq \CI$ and $\frakF=\frakE_\CI$.
 \end{enumerate}

\noindent Under these conditions, $\frakF\proj=\CI$.
\end{enumerate}
\end{proposition}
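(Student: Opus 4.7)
Since statement (2) is dual to (1), I focus on (1), whose proof splits into three parts: (a) $\Rightarrow$ (b), (b) $\Rightarrow$ (a), and the identification $\frakF\inj=\CI$ under the hypotheses of (b).

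For (a) $\Rightarrow$ (b), I take $\CI:=\frakF\inj$, which is an ideal by Lemma \ref{basic-CI-inj}(c). It is preenveloping because Lemma \ref{Icov-vs-Iinjenv}(1) turns each $\frakF$-injective $\frakF$-inflation (provided by (a)) into a $\frakF\inj$-preenvelope. The inclusion $\frakE\inj\subseteq\frakF\inj$ is immediate from $\frakF\subseteq\frakE$, since a morphism factoring through every $\frakE$-inflation factors in particular through every $\frakF$-inflation. Finally, Proposition \ref{prop1-suficiente}(1) applied to $\frakF$ yields $\frakF=\frakE^{\frakF\inj}=\frakE^\CI$. I expect this direction to be routine.

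For (b) $\Rightarrow$ (a), the plan is a pushout--pullback construction. For $A\in\CA$, I choose an $\frakE$-$n$-angle $\mathfrak{d}:A\xra{\alpha} E\to\cdots\to A_n\xra{\phi}\Sigma A$ with $\alpha$ an $\frakE$-injective $\frakE$-inflation (so $\phi$ is a $\Ph(\frakE)$-precover of $\Sigma A$ by Lemma \ref{Icov-vs-Iinjenv}(1)), and an $\CI$-preenvelope $j:A\to M$. Pushing out along $j$ via (N3) produces $j\mathfrak{d}:M\to\cdots\to C_{n-1}\xra{\rho} A_n\xra{(\Sigma j)\phi}\Sigma M$ in $\frakE$, and pulling $\mathfrak{d}$ back along $\rho$ via (N2) produces $\mathfrak{d}\rho:A\xra{f_A} B_2\to\cdots\to C_{n-1}\xra{\phi\rho}\Sigma A$ in $\frakE$. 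The key observation is $\phi\rho\in\mathcal{E}^\CI$: from $((\Sigma j)\phi)\rho=0$ and the preenveloping factorization $i=hj$ for each $i\in\CI$, we get $(\Sigma i)(\phi\rho)=(\Sigma h)((\Sigma j)\phi)\rho=0$, so $\mathfrak{d}\rho\in\frakF$. A short factorization argument using that $\phi$ is a $\Ph(\frakE)$-precover, together with $\mathcal{E}^\CI\subseteq\Ph(\frakE)$ (from $\frakE\inj\subseteq\CI$ and Proposition \ref{prop1-suficiente}(1)), shows $\phi\rho$ is actually a $\Ph(\frakF)$-precover of $\Sigma A$; Lemma \ref{Icov-vs-Iinjenv}(1) then guarantees $f_A$ is an $\frakF$-injective $\frakF$-inflation.

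For $\frakF\inj=\CI$, the inclusion $\CI\subseteq\frakF\inj$ is immediate from Lemma \ref{basic-CI-inj}(b) and the definition of $\mathcal{E}^\CI$. The reverse inclusion $\frakF\inj\subseteq\CI$ is the main obstacle. My plan is to reuse the diagram above together with the factorization $\alpha=\beta j$ (existing because $\alpha\in\frakE\inj\subseteq\CI$ factors through the $\CI$-preenvelope $j$). For $g:A\to Y$ in $\frakF\inj$, I have $(\Sigma g)(\phi\rho)=0$ since $\phi\rho\in\mathcal{E}^\CI=\Ph(\frakF)$; applying Remark \ref{2.1.1}(2)(ii) to a rotation of $j\mathfrak{d}$ I extract $g':\Sigma M\to\Sigma Y$ with $(\Sigma g)\phi=g'(\Sigma j)\phi$, and then applying Remark \ref{2.1.1}(2)(ii) to a rotation of $\mathfrak{d}$ with $\Sigma g-g'\Sigma j$ (which is annihilated by $\phi$) I extract $g'':\Sigma E\to\Sigma Y$ with $\Sigma g-g'\Sigma j=(-1)^n g''\Sigma\alpha$. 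Substituting $\Sigma\alpha=(\Sigma\beta)(\Sigma j)$, collecting $\Sigma j$, and desuspending yields $g=hj$ with $h=\Sigma^{-1}g'+(-1)^n(\Sigma^{-1}g'')\beta:M\to Y$, whence $g\in\CI$. The hardest step will be this double-rotation factorization, where careful bookkeeping of the rotation sign from (F2) and of the positions within the $n$-angles is required to invoke Remark \ref{2.1.1}(2)(ii) correctly.
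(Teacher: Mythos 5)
Your proof is correct, but for the direction (b)$\Rightarrow$(a) and the identification $\frakF\inj=\CI$ it takes a genuinely longer route than the paper's. The paper's observation is that the $\CI$-preenvelope $j:A\to I$ is \emph{itself} the desired $\frakF$-injective $\frakF$-inflation, which short-circuits your two-step pushout--pullback construction of $f_A$ and your double-rotation factorization. Concretely: complete $j$ to an $n$-angle $\mathfrak{d}_j:A\xrightarrow{j}I\to\cdots\to A_n\xrightarrow{\varphi}\Sigma A$; using $e=gj$ (the $\frakE$-injective inflation factors through the preenvelope since $e\in\frakE\inj\subseteq\CI$) and (F3), build a morphism of $n$-angles $\mathfrak{d}_j\to\mathfrak{d}_e$ with identities at the ends, whence $\varphi$ factors through the phantom of $\mathfrak{d}_e$, so $\varphi\in\Ph(\frakE)$ and $\mathfrak{d}_j\in\frakE$; next, for any $i'\in\CI$ with source $A$, the preenvelope gives $i'=g'j$ and $(\Sigma j)\varphi=0$ forces $(\Sigma i')\varphi=0$, i.e.\ $\varphi\in\mathcal{E}^\CI$, so $\mathfrak{d}_j\in\frakF$ and $j$ is an $\frakF$-inflation; finally $j\in\CI\subseteq\frakE^\CI\inj=\frakF\inj$ makes $j$ $\frakF$-injective. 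The remaining inclusion $\frakF\inj\subseteq\CI$ is then automatic, with no rotation bookkeeping at all: any $g\in\frakF\inj$ factors through the $\frakF$-inflation $j\in\CI$, and $\CI$ is an ideal. Your version does work (and I checked that your $\Ph(\frakF)$-precover claim and the two factorizations go through, granted that ``a rotation'' should read ``one rotation of $j\mathfrak{d}$'' in the first step and ``two rotations of $\mathfrak{d}$'' in the second so that the maps land in positions $\leq n-1$ where Remark~\ref{2.1.1}(2)(ii) applies), but the extra machinery buys nothing: the object $f_A$ you build is never used again, and you re-derive $\frakF\inj\subseteq\CI$ from scratch rather than reading it off the $\frakF$-inflation you already have. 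The moral you should take away from the paper's proof is that the preenvelope morphism, not a map constructed from it, is the natural candidate for the $\frakF$-injective inflation.
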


\begin{proof} We only prove (1).

(a)$\Rightarrow$(b) Take $\CI=\frakF\textrm{-}\mathrm{inj}$. The conclusion follows from Definition \ref{def-suficiente} and Proposition
\ref{prop1-suficiente}.

(b)$\Rightarrow$(a) Suppose that $\CI$ is a preenveloping ideal in $\CA$ such that $\frakE\textrm{-}\mathrm{inj}\subseteq \CI$, and $\frakF=\frakE^\CI$. We will prove that there are  enough $\frakE^\CI$-injective morphisms. Let $A\in\CA$.
We start with an $n$-angle
$$\xymatrix@=0.5cm{           
 & A\ar[r]^e & E\ar[r]&B_3\ar[r]& \cdots \ar[r]& B_{n-1}\ar[r]   & B_n\ar[r]  & \Sigma A   }$$
  in $\frakE$
such that $e$ is $\frakE$-injective. Let $i:A\to I$ be an
$\CI$-preenvelope for $A$. Since $e\in \frakE\textrm{-}\mathrm{inj}\subseteq\CI$, there exists a morphism $g:I\to E$ such that $e=gi$. Choose an $n$-angle
$$\xymatrix@=0.5cm{           
 & A\ar[r]^i & I\ar[r]&A_3\ar[r]& \cdots \ar[r]& A_{n-1}\ar[r]   & A_n\ar[r]  & \Sigma A.   }$$
Then we have a morphism of $n$-angles as follows:
\[\xymatrix@=0.5cm{           
  A\ar[r]^i\ar@{=}[d] & I\ar[r]\ar[d]^g& A_3\ar[r]\ar[d]&\cdots \ar[r]& A_{n-1}\ar[r] \ar[d]   & A_n\ar[r]  \ar[d]   & \Sigma A \ar@{=}[d]\\
   A\ar[r]^{e} &E\ar[r]&B_3\ar[r]&  \cdots \ar[r]& B_{n-1}\ar[r]    & B_n\ar[r]     & \Sigma A         }\]
By the condition (N2), we know that the top $n$-angle is in $\frakE$, and hence  $i$ is an $\frakE$-inflation.
Moreover, since $i$ is an $\CI$-preenvelope, for any $i':A\to I'$, there exists a morphism $g:I\to I'$ such that $i'=gi$, which implies that
 $(\Sigma i')\varphi=(\Sigma g)(\Sigma i)\varphi=0 $. Thus the $n$-angle $$\xymatrix@=0.5cm{           
 & A\ar[r]^i & I\ar[r]&A_3\ar[r]& \cdots \ar[r]& A_{n-1}\ar[r]   & A_n\ar[r]^{\varphi}  & \Sigma A   }$$
  is in $\frakE^\CI$, and hence
$i$ is a $\frakE^\CI$-injective $\frakE^\CI$-inflation.

For the last statement, let us observe that for every $A\in \CA$ every $\CI$-preenvelope $i:A\to I$ is a $\frakF$-inflation.
Therefore, every $\frakF$-injective morphism $A\to X$ factorizes through $i$,
hence $\frakF\inj\subseteq \CI$ since $\CI$ is an ideal. On the other hand, $\CI\subseteq \frakE^\CI\inj=\frakF\inj$ since $\frakF=\frakE^\CI$.
Thus $\frakF\inj=\CI$.
\end{proof}

{

\begin{example}\label{ex-constr-suf-proj}
If $\CA=\CT$ and $\frakE=\foa$ is the class consisting of all $n$-angles in $\CT$ then $\foa\mbox{-inj}=0$ and $\foa\mbox{-proj}=0$. Since in this case all morphisms are $\foa$-inflations and $\foa$-deflations, it follows that there are enough
$\foa$-injective morphisms and $\foa$-projective morphisms.

If $\CI$ is a preenveloping (respectively, precovering) ideal, we consider the almost $n$-exact structure $\foa^\CI$ (respectively, $\foa_\CI$) of all $n$-angles $\mathfrak{d}$ such that all $i\in \CI$ are injective (respectively, projective) relative to $\mathfrak{d}$. By what we just proved we obtain that $\foa^\CI$
(respectively, $\foa_\CI$) has enough injective (respectively, projective) morphisms and $\foa^\CI\inj=\CI$ (respectively, $\foa_\CI\proj=\CI$).
\end{example}
}

\subsection{Orthogonality}

\begin{definition}
We say that a morphism $f:X\to A$ from $\CA$ is \textsl{left orthogonal}
(with respect to $\frakE$)
to a morphism $g:B\to Y$ from $\CA$,
and we denote this by $f\perp g$, 
if $$\CT(f,\Sigma g)(\Ph(\frakE))=0,$$ i.e.
$$(\Sigma g)\phi f=0$$
for all morphisms $\phi:A\to \Sigma B$ in $\Ph(\frakE)$
\end{definition}

\begin{remark}
Let $f:X\to A$ and $g:B\to Y$ be morphisms in $\CA$. The following are equivalent:
\begin{enumerate}[{\rm (a)}]
  \item $f\perp g$.
  \item for every $n$-angle \[\xymatrix@=0.5cm{ A_1\ar[r] & A_2\ar[r]&\cdots\ar[r]& A_{n-1} \ar[r]  & A_n \ar[r]^{\phi} & \Sigma A_1}\] in $\frakE$, the bottom $n$-angle obtained in the following diagrams
 \[ \xymatrix@=0.5cm{ A_1\ar[r]\ar@{=}[d] & A_2\ar[r]\ar@{<-}[d]&\cdots\ar[r]& A_{n-1} \ar[r]\ar@{<-}[d]  & A_n \ar[r]^{\phi}\ar@{<-}[d]^{f} & \Sigma A_1\ar@{=}[d]\\
A_1\ar[r]\ar[d]^{g} & B_2\ar[r]\ar[d] &\cdots\ar[r]& B_{n-1} \ar[r]\ar[d] & B_n \ar[r]^{\phi f}\ar@{=}[d] & \Sigma A_1\ar[d]^{\Sigma g} \\
C_1\ar[r] & C_2\ar[r]&\cdots\ar[r] & C_{n-1} \ar[r]& B_n \ar[r]^{(\Sigma g)\phi f} & \Sigma C_1
}\]
is split.
\end{enumerate}
\end{remark}

\begin{example}
If $\CA=\CT$ and
$\frakE=\foa$, then $f\perp g$ if and only if $\CT(f,\Sigma g)=0$.
\end{example}

\begin{lemma}\label{lema-ort-basic}
Let $f:A_{n-1}\to A_n$ and $g:B_1\to  B_2$ be two morphisms in $\CA$. The following are equivalent:
\begin{enumerate}[{\rm (1)}]
 \item $f\perp g$;
 \item every morphism $\phi:A_n\to \Sigma B_1$ in $\Ph(\frakE) $ induces a morphism of $n$-angles
 \[ \xymatrix@=0.5cm{ A_1 \ar[r] \ar@{-->}[d]&A_2\ar[r]\ar@{-->}[d]&\cdots\ar[r] & A_{n-1}\ar[r]^f \ar@{-->}[d] & A_n \ar[r] \ar[d]^{\phi} & \Sigma A_1\ar@{-->}[d]  \\
 B_2 \ar[r]& B_2\ar[r]&\cdots\ar[r]  & B_n\ar[r]&\Sigma B_1 \ar[r]_{\Sigma g}  & \Sigma B_2
 .}
 \]
 \end{enumerate}
\end{lemma}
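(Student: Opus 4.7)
The plan is to prove the two implications separately: (2)$\Rightarrow$(1) will be a direct diagram chase, while (1)$\Rightarrow$(2) will require producing a lift and then applying axiom (F3) after suitable rotations.

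For (2)$\Rightarrow$(1), I will fix $\phi\in\Ph(\frakE)$ and extract from the assumed morphism of $n$-angles the commutativity of the rightmost square, which reads
\[(\Sigma h_1)\alpha_n = \pm(\Sigma g)\phi\]
(with a sign coming from the left rotation in (F2)), where $\alpha_n$ is the last morphism of the top $n$-angle and $h_1\colon A_1\to B_2$ is the leftmost dashed vertical. Right-composing with $f=\alpha_{n-1}$ and using $\alpha_n\alpha_{n-1}=0$ from Remark~\ref{2.1.1}(2)(i) gives $(\Sigma g)\phi f=0$. Since $\phi$ was arbitrary, this yields $f\perp g$.

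For (1)$\Rightarrow$(2) I will fix $\phi\in\Ph(\frakE)$ and proceed in two steps. First, applying (F1)(c) to $g$ gives an $n$-angle $\mathfrak{b}\colon B_1\xra{g}B_2\to\cdots\to B_n\xra{\beta_n}\Sigma B_1$, whose left rotation (by (F2)) coincides up to sign with the bottom row of the diagram in (2); similarly, (F1)(c) combined with rotations via (F2) provides an $n$-angle realising $f$ as its $(n-1)$-th morphism. The hypothesis $(\Sigma g)\phi f=0$, combined with the exactness of $\CT(A_{n-1},-)$ applied to the rotation of $\mathfrak{b}$ and Remark~\ref{2.1.1}(2)(iii), produces a lift $h_{n-1}\colon A_{n-1}\to B_n$ with $\beta_n h_{n-1}=\phi f$. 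This yields the commutative square
\[\xymatrix@=0.5cm{A_{n-1}\ar[r]^f\ar[d]^{h_{n-1}} & A_n\ar[d]^\phi\\ B_n\ar[r]^{\beta_n} & \Sigma B_1}\]
joining the two $n$-angles at their rightmost adjacent positions.

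In the second step I will left-rotate the top $n$-angle $(n-2)$ times and $\mathfrak{b}$ a total of $(n-1)$ times via (F2), so that the square above becomes the leftmost square of a morphism between the rotated $n$-angles. Axiom (F3) then extends this square to a full morphism of the rotated $n$-angles, and rotating back via (F2) produces the morphism of $n$-angles required in (2). The main obstacle I expect is the bookkeeping with the rotations and with the signs introduced by (F2); beyond that there is no conceptual difficulty, since all the essential ingredients are already packaged in (F1)(c), (F2), (F3), and the Hom-exactness recalled in Remark~\ref{2.1.1}(2).
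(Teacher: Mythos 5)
Your proof is correct and follows essentially the same route as the paper: for (1)$\Rightarrow$(2) you use $(\Sigma g)\phi f=0$ together with exactness to produce the lift $A_{n-1}\to B_n$ giving a commuting square, then complete via rotations and (F3); for (2)$\Rightarrow$(1) you read off the needed vanishing from a square of the morphism of $n$-angles (you use the rightmost square plus $\alpha_n\alpha_{n-1}=0$, whereas the paper reads it off the square at $f$, but this is an inessential variation). Your write-up is a bit more explicit than the paper's about the rotation bookkeeping in the completion step, which the paper leaves implicit.
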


\begin{proof}
Let $\phi:A\to \Sigma B$ be a morphism in $\Ph(\frakE) $. If we complete $f$ and $g$ to $n$-angles above, respectively below, we obtain a
diagram \[ \xymatrix@=0.5cm{ A_1 \ar[r] &A_2\ar[r]&\cdots\ar[r] & A_{n-1}\ar[r]^f  & A_n \ar[r] \ar[d]^{\phi} & \Sigma A_1  \\
 B_2 \ar[r]& B_2\ar[r]&\cdots\ar[r]  & B_n\ar[r]&\Sigma B_1 \ar[r]_{\Sigma g}  & \Sigma B_2
 .}
 \]
Therefore, $f\perp g$ if and only if $(\Sigma g)\phi f=0$, if and only if there exists a morphism $A_{n-1}\to B_n$ such that the square
\[ \xymatrix@=0.5cm{  A_{n-1}\ar[r]^f \ar@{-->}[d] & A_n  \ar[d]^{\phi}   \\
  B_n\ar[r]  & \Sigma B_1
 }
 \]
 is commutative.
 \end{proof}

 Let $\CM$ be a class of maps in $\CA$. We define
 \begin{align*}
    \CM^{\perp}&=\{g\in\CA^\to \mid m\perp g \text{ for all } m\in \CM\} \\
    & =\{g\in\CA^\to \mid (\Sigma g)\phi m=0 \text{ for all } m\in \CM\mbox{ and for all } \phi\in \Ph(\frakE)\},
 \end{align*}
 and
\begin{align*}
    {^{\perp}\CM}&=\{g\in\CA^\to \mid g\perp m \text{ for all } m\in \CM\} \\
    & =\{g\in\CA^\to \mid (\Sigma m)\phi g=0 \text{ for all } m\in \CM\mbox{ and for all } \phi\in \Ph(\frakE)\}.
 \end{align*}

  The proof of the next lemma is straightforward:

 \begin{lemma}
  Let $\CM$ be a class of morphisms in $\CA$. Then
  \begin{enumerate}[{\rm (1)}]
 \item $\CM^{\perp}$ and ${^{\perp}\CM}$ are ideals in $\CA$.
 \item $\Sigma^m\CM^\perp=\left(\Sigma^m\CM\right)^\perp$ and $\Sigma^m{^\perp}{\CM}={^\perp}{\left(\Sigma^m\CM\right)}$ for all $m\in\Z$,
 where the ideals $\left(\Sigma^m\CM\right)^\perp$ and ${^\perp}{\left(\Sigma^m\CM\right)}$ of $\Sigma^m\CA$ are computed with respect to
 $\Sigma^m\frakE$.
\end{enumerate}
   \end{lemma}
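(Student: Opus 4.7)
The plan is to verify the ideal axioms from the definitions, using the phantom $\CA$-ideal property of $\Ph(\frakE)$ to absorb pre- and post-compositions, and then to translate everything through the automorphism $\Sigma^m$ in the obvious way. Since the expressions in the defining identity for $g \perp m$ are bilinear, closure of $\CM^\perp$ under addition of morphisms is immediate, and I will not dwell on it.

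For part (1), the real content is closure of $\CM^\perp$ under two-sided composition by arbitrary $\CA^{\to}$ morphisms. I would fix $g:R\to S$ in $\CM^\perp$ and a composable chain $A\xrightarrow{f} R\xrightarrow{g} S\xrightarrow{h} B$ in $\CA$, and then take any $m\in\CM$ and any $\phi\in\Ph(\frakE)$ for which the composition below is defined. The key manipulation is
$$(\Sigma(hgf))\,\phi\, m \;=\; (\Sigma h)(\Sigma g)\bigl((\Sigma f)\phi\bigr)m.$$
Axiom (P3) of a phantom $\CA$-ideal says that $\Sigma\CA^{\to}\,\Ph(\frakE)\,\CA^{\to}\subseteq\Ph(\frakE)$, so $(\Sigma f)\phi\in\Ph(\frakE)$. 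Then applying $g\in\CM^\perp$ to the pair $\bigl(m,\,(\Sigma f)\phi\bigr)$ forces the middle factor to vanish, after which $\Sigma h$ annihilates everything. Hence $hgf\in\CM^\perp$, and $\CM^\perp$ is an ideal. The argument for ${^{\perp}\CM}$ is obtained by the symmetric computation, pulling out $h$ on the left of $\phi$ instead of $f$ on the right.

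For part (2), I would first record that because $\Sigma$ is an automorphism of $\CT$, it sends the $n$-angles in $\frakE$ bijectively onto those in $\Sigma^m\frakE$, so the phantoms transform as $\Ph(\Sigma^m\frakE)=\Sigma^m\Ph(\frakE)$. Given $g\in\CM^\perp$, applying $\Sigma^m$ to the defining vanishing identity $(\Sigma g)\phi m=0$ yields $(\Sigma(\Sigma^m g))(\Sigma^m\phi)(\Sigma^m m)=0$; since every $m'\in\Sigma^m\CM$ and every $\phi'\in\Ph(\Sigma^m\frakE)$ arises as $\Sigma^m m$ and $\Sigma^m\phi$ respectively, this says exactly that $\Sigma^m g\in(\Sigma^m\CM)^\perp$ relative to $\Sigma^m\frakE$. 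The reverse inclusion is obtained by the same argument applied to the automorphism $\Sigma^{-m}$, giving the equality $\Sigma^m\CM^\perp=(\Sigma^m\CM)^\perp$. The dual formula for ${^\perp\CM}$ is identical.

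No serious obstacle is foreseen: the only care needed is keeping track of domains, codomains, and which class (ordinary ideal, phantom ideal, ambient $\CA^{\to}$) each morphism belongs to, so that the correct clause of the phantom $\CA$-ideal axioms (P2) and (P3) can be invoked at each step. Once that bookkeeping is in place, both parts reduce to transparent symbol manipulations, which justifies the authors' comment that the proof is straightforward.
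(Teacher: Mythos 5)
Your proposal is correct and follows the route the paper evidently intends (the paper marks this lemma as "straightforward" and omits the details). The heart of part (1) is exactly the absorption $(\Sigma f)\phi\in\Ph(\frakE)$ granted by axiom (P3) of a phantom $\CA$-ideal, after which $(\Sigma g)((\Sigma f)\phi)m=0$ and the outer $\Sigma h$ kills what is already zero; your treatment of part (2) by conjugating with the automorphism $\Sigma^m$ and invoking $\Ph(\Sigma^m\frakE)=\Sigma^m\Ph(\frakE)$ is also the expected argument.
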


\subsection{Special precovers and special preenvelopes}

If $\CI$ is an ideal in $\CA$, a morphism $i:X\to A$ in $\CI$ is a \textsl{special $\CI$-precover (\wrt $\frakE$)} of $A$
if in the corresponding $n$-angle $$\xymatrix@=0.5cm{           
  \mathfrak{d}:& A_1\ar[r] & \cdots \ar[r]&A_{n-2}\ar[r]& X\ar[r]^i   & A\ar[r]^-{\psi}  & \Sigma A_1   }$$
   we have $\psi\in (\Sigma\CI^\perp) \Ph(\frakE)$,
i.e. $\psi=(\Sigma j)\phi$ for some $j\in\CI^\perp$ and some $\phi\in\Ph(\frakE)$.

Note that $j\in\CI^\perp$ means $(\Sigma j)\varphi i=0$ for all $\varphi\in\Ph(\frakE)$ and all $i\in \CI$.

We say that $\CI$ is a \textsl{special precovering ideal} if every object  in $\CA$ has a special $\CI$-precover.

Dually, if $\CJ$ is an ideal in $\CA$, a morphism $j:B\to Y$ in $\CJ$ is a \textsl{special $\CJ$-preenvelope (\wrt $\frakE$)} of $B$
if in the corresponding $n$-angle $$\xymatrix@=0.5cm{           
  \mathfrak{d}:& B\ar[r]^j&Y\ar[r]&B_3 \ar[r]& \cdots \ar[r]& B_n\ar[r]^\psi   &  \Sigma B   }$$ we have
$\psi\in \Ph(\frakE)\,({^\perp{ \CJ}})$, i.e. $\psi=\phi i$ with $i\in {^\perp{ \CJ}}$,
$\phi\in\Ph(\frakE)$.

Note that $i\in {^\perp{ \CJ}}$ means $(\Sigma j)\varphi i=0$ for all $j\in\CJ$ and all $\varphi\in\Ph(\frakE)$.

We say that $\CJ$ is a \textsl{special preenveloping ideal} if every object  in $\CA$ has a special $\CJ$-preenvelope.

\begin{remark}
A morphism $i:X\to A$ is a {special $\CI$-precover} of $A$ if and only if
there exists a morphism of $n$-angles
\[{\rm (SPC)}\xymatrix@=0.5cm{           
  \mathfrak{d}:& B_1\ar[r]\ar[d]_j & B_2\ar[r]\ar[d]& \cdots \ar[r]& B_{n-2}\ar[r] \ar[d]& B_{n-1}\ar[r] \ar[d]   & A\ar[r]^-\phi  \ar@{=}[d]   & \Sigma B_1 \ar[d]^{\Sigma j}\\
  j\mathfrak{d}: & A_1\ar[r] &A_2\ar[r]& \cdots \ar[r]& A_{n-2}\ar[r] & X\ar[r]^i    & A\ar[r]^-{\psi}     & \Sigma A_1         }\]
such that $j\in \CI^{\perp}$, and the top $n$-angle $\mathfrak{d}\in\frakE$.

Dually, a morphism $j:B\to Y$ is a {special $\CJ$-preenvelope} of $B$
if and only if there exists  a morphism of $n$-angles
\[{\rm (SPE)}\xymatrix@=0.5cm{           
  \mathfrak{d}i:& B\ar[r]^j\ar@{=}[d] & Y\ar[r]\ar[d]&B_3\ar[r]\ar[d]& \cdots \ar[r]& B_{n-1}\ar[r] \ar[d]   & B_n\ar[r]^-{\psi}  \ar[d]^i  & \Sigma B \ar@{=}[d]\\
  \mathfrak{d}: & B\ar[r] &A_2\ar[r]& A_3\ar[r]&\cdots \ar[r]& A_{n-1}\ar[r]    & A_n\ar[r]^-\phi     & \Sigma B         }\]
such that $i\in {^{\perp}}\CJ$, and the bottom $n$-angle $\mathfrak{d}\in\frakE$.
\end{remark}

In both diagrams (SPC) and (SPE), since $\phi\in \Ph(\frakE)$, $\psi\in\Ph(\frakE) $, then all horizontal $n$-angles are in $\frakE$
. This shows that every special $\CI$-precover (respectively, $\CJ$-preenvelope)
is a $\frakE$-deflation (respectively, $\frakE$-inflation).

Moreover, since $\psi=(\Sigma j) \phi$ and $j\in \CI^{\perp}$, we have $\psi \CI=0$ in (SPC). Similarly, $(\Sigma\CJ) \psi=0$ in (SPE).

\begin{lemma}\label{3.3.2}
Let $\CI$ and $\CJ$ be ideals  in $\CA$.
\begin{enumerate}[{\rm (1)}]
\item Every special $\CI$-precover is an $\CI$-precover.
\item Every special $\CJ$-preenvelope is a $\CJ$-preenvelope.
\end{enumerate}
\end{lemma}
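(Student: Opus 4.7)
The plan is to prove (1) directly and then note that (2) follows by dualisation. Fix a special $\CI$-precover $i:X\to A$, sitting inside an $n$-angle in $\frakE$
\[
\mathfrak{d}:\ A_1\to A_2\to\cdots\to A_{n-2}\to X\xrightarrow{i} A\xrightarrow{\psi}\Sigma A_1,
\]
whose phantom $\psi$ has the factorisation $\psi=(\Sigma j)\phi$ with $j\in\CI^\perp$ and $\phi\in\Ph(\frakE)$.

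Given an arbitrary morphism $i':X'\to A$ in $\CI$, I first want to show $\psi\circ i'=0$. This is pure unwinding of the orthogonality relation: since $j\in\CI^\perp$, we have $i'\perp j$ by definition, which by the definition of $\perp$ means $(\Sigma j)\varphi i'=0$ for every $\varphi\in\Ph(\frakE)$. Specialising to $\varphi=\phi$ gives exactly $\psi\circ i'=(\Sigma j)\phi i'=0$.

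Now I invoke exactness. By Remark \ref{2.1.1}(2) every $n$-angle is exact, so applying $\mathcal{T}(X',-)$ to $\mathfrak{d}$ yields an exact sequence in which $\psi_*(i')=\psi\circ i'=0$ places $i'$ in the image of $i_*:\mathcal{T}(X',X)\to \mathcal{T}(X',A)$. Hence there is $g:X'\to X$ with $i'=i\circ g$, establishing that $i$ is an $\CI$-precover. Equivalently, one may use Remark \ref{2.1.1}(2)(iii) (after a single rotation, so that $A$ appears in a non-terminal position) to extract $g$ from $\psi\circ i'=0$.

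For (2), the dual argument proceeds from the special $\CJ$-preenvelope $j:B\to Y$ with associated $n$-angle $B\xrightarrow{j}Y\to\cdots\to B_n\xrightarrow{\psi}\Sigma B$, where $\psi=\phi i$ with $i\in{^{\perp}\CJ}$ and $\phi\in\Ph(\frakE)$. For any $j':B\to Y'$ in $\CJ$, the relation $i\in{^{\perp}\CJ}$ gives $(\Sigma j')\phi i=0$, i.e. $(\Sigma j')\circ\psi=0$, and exactness of $\mathcal{T}(-,Y')$ applied to the rotated $n$-angle produces the required factorisation $j'=h\circ j$. No serious obstacle is expected: both parts are a single use of exactness combined with the very definition of the orthogonal ideals $\CI^\perp$ and ${^{\perp}\CJ}$.
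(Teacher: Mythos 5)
Your proof is correct and matches the paper's argument essentially verbatim: both reduce to the observation that $\psi i'=(\Sigma j)\phi i'=0$ for any $i'\in\CI$ because $j\in\CI^\perp$, and then invoke exactness of the $n$-angle to factor $i'$ through $i$. The extra care you take about which index Remark \ref{2.1.1}(2)(iii) covers (suggesting a rotation) is a reasonable pedantic point but does not change the substance.
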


\begin{proof} We only prove (1). Let $i:X\to A$ be a {special $\CI$-precover} of $A$. If $i':X\to A$ is a map in $\CI$ then in (SPC) we have $\psi i'=(\Sigma j)\phi i'=0$. Consequently
$i'$ has to factor through $i$.
\end{proof}

The role of special precovers and special preenvelopes is exhibited by the following
version of Salce's Lemma, which is a higher dimensional version for \cite[Theorem 18]{Fu-et-al} and \cite[Theorem 3.3.3]{BM}.

\begin{theorem}\label{salce-lemma} (Salce's Lemma)
Let $\CI$ and $\CJ$ be ideals in $\CA$.
 \begin{enumerate}[{\rm (1)}]
  \item If there are enough $\frakE$-injective morphisms and $\CI$ is a precovering ideal,
 then
     $\CI^{\perp}$ is a special preenveloping ideal.
    \item If there are enough $\frakE$-projective morphisms and $\CJ$ is a preenveloping ideal,
 then
  ${^{\perp}\CJ}$ is a special precovering ideal.
 \end{enumerate}
\end{theorem}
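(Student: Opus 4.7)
The plan is to mirror the classical proof of Salce's Lemma (and its versions in \cite{Fu-et-al,BM}), with short exact sequences (or triangles) replaced by $n$-angles and the pullback/pushout step implemented via axiom (N2) (resp.\ (N3)). I will describe (1); part (2) follows by the dual construction.

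Fix $B\in\CA$. The first step is to invoke ``enough $\frakE$-injective morphisms'' to pick an $\frakE$-injective $\frakE$-inflation $\ell : B \to E$, and complete it to an $n$-angle
\[
\mathfrak{d}:\ B \xra{\ell} E \to A_3 \to \cdots \to A_n \xra{\phi} \Sigma B
\]
in $\frakE$. By Lemma \ref{Icov-vs-Iinjenv}(1), the phantom $\phi$ is then a $\Ph(\frakE)$-precover of $\Sigma B$. Next, since $\CI$ is precovering, pick an $\CI$-precover $i : X \to A_n$ of $A_n$, and apply axiom (N2) to pull $\mathfrak{d}$ back along $i$, producing an $n$-angle
\[
\mathfrak{d}i:\ B \xra{j} Y_2 \to \cdots \to Y_{n-1} \to X \xra{\phi i} \Sigma B
\]
in $\frakE$. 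The candidate special $\CI^{\perp}$-preenvelope is $j$, with $\mathfrak{d}i$ as its witness.

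That $\mathfrak{d}i$ is indeed a special-preenvelope witness is essentially automatic: the phantom $\phi i$ factors as $\phi\cdot i$ with $\phi\in\Ph(\frakE)$ and $i\in\CI\subseteq {^{\perp}(\CI^{\perp})}$, the last inclusion being an immediate unravelling of the definition of $\CI^{\perp}$. The substantive step, and the one I expect to be the main obstacle, is showing $j\in\CI^{\perp}$, namely $(\Sigma j)\varphi i' = 0$ for every $i'\in\CI$ and $\varphi\in\Ph(\frakE)$. I would verify this by a three-step factorization chase: (i) since $\phi$ is a $\Ph(\frakE)$-precover of $\Sigma B$, write $\varphi = \phi h$ for some $h$; (ii) note that $hi'\in\CI$ because $\CI$ is an ideal; (iii) since $i$ is an $\CI$-precover of $A_n$, write $hi' = im$ for some $m$. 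Combining, $(\Sigma j)\varphi i' = (\Sigma j)(\phi i)m$, and this vanishes because $(\Sigma j)(\phi i) = 0$ is the composition of two consecutive maps in a rotation of $\mathfrak{d}i$ (Remark \ref{2.1.1}(2)(i)). Part (2) is proved by the dual recipe: start from an $\frakE$-projective $\frakE$-deflation $P\to B$ (so the first map of the resulting $n$-angle is a $\Ph(\frakE)$-preenvelope by Lemma \ref{Icov-vs-Iinjenv}(2)), take a $\CJ$-preenvelope of its leftmost term, and push out via axiom (N3); the analogous three-step chase then shows the resulting map is in ${^{\perp}\CJ}$.
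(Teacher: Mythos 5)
Your proposal is correct and follows essentially the same strategy as the paper's proof: start with an $n$-angle whose first map is an $\frakE$-injective $\frakE$-inflation, pull it back along an $\CI$-precover of the terminal term using (N2), and then verify $j\in\CI^\perp$ by a factorization chase (through the $\Ph(\frakE)$-precover of $\Sigma B$, then through the $\CI$-precover of $A_n$) ending with the vanishing of two consecutive maps in $\mathfrak{d}i$. The only cosmetic difference is that you invoke Lemma \ref{Icov-vs-Iinjenv}(1) directly to obtain the $\Ph(\frakE)$-precover property and argue in the unshifted picture, whereas the paper applies $\Sigma^{-1}$ and runs the same chase in the rotated diagram.
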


\begin{proof} We only prove (1).

Let $A\in\CA$ and choose an $n$-angle \[\tag{IE} A\overset{e}\to E\to A\to \cdots\to A_n\overset{\psi}\to \Sigma A\]
such that $e$ is an $\frakE$-inflation which is $\frakE$-injective.

Since $\CI$ is a precovering ideal, there exists an $\CI$-precover $i:I\to A_n$. Consider a morphism of $n$-angles
\[\xymatrix@=0.5cm{
   A\ar[r]^{a}\ar@{=}[d] & J\ar[d]\ar[r] &B_3\ar[r]\ar[d]&\cdots\ar[r]   & I\ar[d]^{i} \ar[r]^-{\psi i}&\Sigma A\ar@{=}[d] \\
   A\ar[r]^{e}           & E\ar[r] &A_3\ar[r]  &\cdots\ar[r] & A_n         \ar[r]^-{\psi}&\Sigma A.
}\]
We claim that $a$ is a special $\CI^{\perp}$-preenvelope of $A$.
In order to prove this, it is enough to show that $a\in\CI^{\perp}$ since from the obvious inclusion $\CI\subseteq{^{\perp}(\CI^{\perp})}$  we know that $i\in {^{\perp}(\CI^{\perp})}$.

Since $\Sigma$ is an equivalence, in the subcategory $\Sigma^{-1}\CA$, the morphism $\Sigma^{-1}i:\Sigma^{-1}I\to \Sigma^{-1}A_n$ is an $\Sigma^{-1}\CI$-precover of $\Sigma^{-1}A_n$, and we have
the solid part of the following commutative diagram:
\[\xymatrix@=0.6cm{ \Sigma^{-1}Y\ar@{-->}[r]^{\Sigma^{-1}\kappa} \ar@{..>}[d]^{\eta} & \Sigma^{-1}B \ar@{-->}[d]^{\Sigma^{-1}\varphi} \ar@{..>}[ddl]_<<<<<<<{\zeta} & &  &&&& \\
  \Sigma^{-1}I\ar[r]_{\ \ \ \ \ \ u}\ar[d]_{\Sigma^{-1}i} & A\ar[r]^{a}\ar@{=}[d] & J\ar[d]\ar[r]    &B_3\ar[r]\ar[d]&\cdots\ar[r]   & I\ar[d]^{i} \ar[r]^-{\psi i}&\Sigma A\ar@{=}[d]\\
  \Sigma^{-1}A_n\ar[r]^{(-1)^n\Sigma^{-1}\psi} & A\ar[r]^{e}         & E\ar[r] &A_3\ar[r]  &\cdots\ar[r] & A_n         \ar[r]^-{\psi}&\Sigma A.
}\]

Let $\kappa:Y\to B$ be in $\CI$ and let $\varphi:B\to \Sigma A$ be in $\Phi_n(\frakE)$. Since $e$ is $\frakE$-injective
we have $e(\Sigma^{-1}\varphi)=0$, hence there exists a morphism $\zeta:\Sigma^{-1}B\to \Sigma^{-1}A_n$ such that $\Sigma^{-1}\varphi=((-1)^n\Sigma^{-1}\psi)\zeta$. Since $\Sigma^{-1}\CI$
is an ideal in $\Sigma^{-1}\CA$, we have $\zeta(\Sigma^{-1}\kappa)\in\Sigma^{-1}\CI$, hence $\zeta(\Sigma^{-1}\kappa)$ factorizes through the $\Sigma^{-1}\CI$-precover $\Sigma^{-1}i$, that is, there exists a morphism $\eta:\Sigma^{-1}Y\to \Sigma^{-1}I$
such that $\zeta(\Sigma^{-1}\kappa)=(\Sigma^{-1}i)\eta$.
Finally, $$a(\Sigma^{-1}\varphi)(\Sigma^{-1}\kappa)=(-1)^na(\Sigma^{-1}\psi)\zeta(\Sigma^{-1}\kappa)=
(-1)^na(\Sigma^{-1}\psi)(\Sigma^{-1} i) \eta=au\eta=0,$$
hence $(\Sigma a)\varphi\kappa=0$. Therefore, $a\in\CI^{\perp}$.
\end{proof}

In the proof of Theorem \ref{salce-lemma} we actually obtain the following corollary.

\begin{corollary}\label{salce-lemma-cor}
{\rm (1)} If $\CI$ is an ideal in $\CA$ and \[\xymatrix@=0.5cm{
   A\ar[r]^{a}\ar@{=}[d] & J\ar[d]\ar[r] &B_3\ar[r]\ar[d]&\cdots\ar[r]   & I\ar[d]^{i} \ar[r]^-{\psi i}&\Sigma A\ar@{=}[d] \\
   A\ar[r]^{e}           & E\ar[r] &A_3\ar[r]  &\cdots\ar[r] & A_n         \ar[r]^-{\psi}&\Sigma A.
}\] is a commutative diagram in $\CA$ such that the horizontal lines are $n$-angles in $\frakE$, the morphism $e$ is
$\frakE$-injective and $i$ is an $\CI$-precover of $A_n$, then the morphism $a$ is a special
$\CI^\perp$-preenvelope of $A$.

{\rm (2)} If $\CJ$ is an ideal in $\CA$  and \[\xymatrix@=0.6cm{
 A_1\ar[d]^{j}\ar[r] &\ar[r]A_2\ar[r]\ar[d]&\cdots\ar[r]   & A_{n-1}\ar[d] \ar[r]^p& A\ar@{=}[d] \ar[r] & \Sigma A_1 \ar[d]^{\Sigma j} \\
 J\ar[r]  &B_2\ar[r]&\cdots\ar[r]  & Y          \ar[r]^{b} & A \ar[r] & \Sigma J
}\] is a commutative diagram in $\CA$ such that the horizontal lines are $n$-angles in $\frakE$, $p$ is
$\frakE$-projective  and $j$ is a $\CJ$-preenvelope of $A_1$, then $b$ is a special
${^\perp\CJ}$-precover of $A$.
\end{corollary}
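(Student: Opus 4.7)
The plan is to extract the argument already carried out inside the proof of Theorem~\ref{salce-lemma}, observing that the conclusion ``$a$ is a special $\CI^\perp$-preenvelope of $A$'' was deduced there from precisely the data now given in the hypothesis. The global assumption that $\CI$ is precovering on all of $\CA$ was used only in order to produce the object $A_n$ together with the precover $i:I\to A_n$ in the first place; once such a pair is supplied from outside, as in the statement of the corollary, the remainder of the argument becomes purely local and applies verbatim.

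For (1), I would first observe that the displayed diagram already has the shape required by the definition of a special $\CI^\perp$-preenvelope: the top $n$-angle lies in $\frakE$ by hypothesis, and the closing vertical morphism $i\in\CI$ automatically belongs to ${^\perp(\CI^\perp)}$ via the trivial inclusion $\CI\subseteq {^\perp(\CI^\perp)}$. Consequently, the only content that remains to be verified is that $a\in\CI^\perp$: for an arbitrary $\kappa:Y\to B$ in $\CI$ and an arbitrary $\varphi:B\to \Sigma A$ in $\Ph(\frakE)$, I must show $(\Sigma a)\varphi\kappa=0$.

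For this key step, the plan is to string together three factorizations, exactly as in the proof of Theorem~\ref{salce-lemma}. First, translate the $\frakE$-injectivity of $e$ via Lemma~\ref{basic-CI-inj}(b) into $e\cdot\Sigma^{-1}\varphi=0$. Second, apply Remark~\ref{2.1.1}(2)(iii) to an appropriate right rotation of the bottom $n$-angle to factor $\Sigma^{-1}\varphi$ through the map $(-1)^n\Sigma^{-1}\psi$, say $\Sigma^{-1}\varphi=((-1)^n\Sigma^{-1}\psi)\zeta$. Third, exploit that $\Sigma^{-1}i$ is a $\Sigma^{-1}\CI$-precover of $\Sigma^{-1}A_n$ (this is where the precover hypothesis enters) to factor the composite $\zeta\cdot\Sigma^{-1}\kappa\in\Sigma^{-1}\CI$ as $(\Sigma^{-1}i)\eta$. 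Finally, combining these factorizations with the vanishing $a\cdot\Sigma^{-1}(\psi i)=0$, which holds by Remark~\ref{2.1.1}(2)(i) applied to the top $n$-angle $A\xra{a}J\to\cdots\to I\xra{\psi i}\Sigma A$, produces $(\Sigma a)\varphi\kappa=0$, as required.

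Part (2) is the strict dual of (1): reverse all arrows, swap $\frakE$-injective with $\frakE$-projective, $\CI$-precover with $\CJ$-preenvelope, and ${^\perp(-)}$ with $(-)^\perp$, and then invoke the dual halves of Lemma~\ref{basic-CI-inj} and Remark~\ref{2.1.1}. The only substantive obstacle anywhere is keeping careful track of the $\Sigma^{\pm 1}$-shifts and signs when passing between an $n$-angle and its rotations; beyond this bookkeeping, no new idea is required.
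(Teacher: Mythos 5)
Your proposal is correct and mirrors the paper exactly: the paper itself states that the corollary is simply read off from the proof of Theorem~\ref{salce-lemma}, and you correctly isolate the local portion of that argument (injectivity of $e$, the factorization through $(-1)^n\Sigma^{-1}\psi$, the precover property of $\Sigma^{-1}i$, and $a\cdot\Sigma^{-1}(\psi i)=0$) which uses only the supplied data $(e,i)$ rather than the global precovering hypothesis.
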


\subsection{Ideal cotorsion pairs}

\begin{definition}
Let $\CI,\CJ$ be ideals in $\CA$.
\begin{enumerate}[{\rm (1)}]
  \item The pair $(\CI,\CJ)$  is said to be \textsl{orthogonal} (with respect to  $\frakE$) if $i\perp j$ for all $i\in \CI$ and $j\in \CJ$,
i.e. $\CJ\subseteq \CI^\perp$ and $\CI\subseteq {^\perp \CJ}$.
  \item An \textsl{ideal cotorsion pair} (with respect to  $\frakE$) is a pair of ideals
$(\CI, \CJ)$ in $\CA$ such that $\CJ=\CI^\perp$ and $\CI=\, {^\perp \CJ}$.
\item The ideal cotorsion pair $(\CI,\CJ)$ is \textsl{complete} if $\CI$ is a special precovering ideal and $\CJ$ is a special
preenveloping ideal.
\end{enumerate}
\end{definition}

Now using the terminology of complete ideal cotorsion pairs, we can give another expression of Salce's Lemma, which is just \cite[Theorem 3.4.1]{BM} in case
$n=3$.

\begin{theorem}\label{cotorsion-precovering}
Let $\CI,\CJ$ be ideals in $\CA$.
\begin{enumerate}[{\rm (1)}]
  \item If $\CI$ is a special precovering ideal, then $(\CI,\CI^{\perp})$ is an ideal cotorsion pair.
Moreover, if there are enough $\frakE$-injective morphisms, then the ideal cotorsion pair
$(\CI,\CI^\perp)$ is complete.
  \item If $\CJ$ is a special preenveloping ideal, then $({^\perp \CJ},\CJ)$ is an ideal cotorsion pair.
Moreover, if there are enough $\frakE$-projective morphisms, then the ideal cotorsion pair
$({^\perp \CJ},\CJ)$ is complete.
\end{enumerate}
\end{theorem}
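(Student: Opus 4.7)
My plan is to prove (1) directly; (2) will follow by the dual argument. For (1) we must show that $(\CI, \CI^{\perp})$ is an ideal cotorsion pair and, under the extra hypothesis, that it is complete. The orthogonality inclusion $\CI \subseteq {^{\perp}(\CI^{\perp})}$ is immediate from the definition of $\perp$, so the only nontrivial task is the reverse inclusion ${^{\perp}(\CI^{\perp})} \subseteq \CI$: this is what upgrades an orthogonal pair of ideals to a cotorsion pair.

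To establish this inclusion, I take a morphism $f : U \to A$ in ${^{\perp}(\CI^{\perp})}$ and use the hypothesis that $\CI$ is special precovering. Choose a special $\CI$-precover $i: X \to A$ of $A$; by definition it sits in an $n$-angle
$$A_1 \to \cdots \to A_{n-2} \to X \xra{i} A \xra{\psi} \Sigma A_1$$
from $\frakE$ whose phantom factors as $\psi = (\Sigma j)\phi$ with $j \in \CI^{\perp}$ and $\phi \in \Ph(\frakE)$. Since $f \in {^{\perp}(\CI^{\perp})}$, in particular $f \perp j$, which unravels to $(\Sigma j)\phi' f = 0$ for every $\phi' \in \Ph(\frakE)$; applied to $\phi' = \phi$ this gives $\psi f = 0$. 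Remark \ref{2.1.1}(2)(iii) then factors $f$ through $i$ as $f = ig$ for some $g: U \to X$, and since $i \in \CI$ and $\CI$ is an ideal, $f = ig \in \CI$.

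For the completeness clause I would invoke Salce's Lemma. By Lemma \ref{3.3.2}(1) every special precovering ideal is in particular precovering, so under the hypothesis of enough $\frakE$-injective morphisms, Theorem \ref{salce-lemma}(1) shows that $\CI^{\perp}$ is a special preenveloping ideal. Combined with the specialness of $\CI$ as a precovering ideal, this is precisely completeness of $(\CI, \CI^{\perp})$.

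Part (2) is obtained by the dual argument: given $g \in ({^{\perp}\CJ})^{\perp}$ with domain $C$, pick a special $\CJ$-preenvelope $j: C \to Y$ whose phantom decomposes as $\psi = \phi i$ with $i \in {^{\perp}\CJ}$ and $\phi \in \Ph(\frakE)$; orthogonality yields $(\Sigma g)\phi i = 0$, hence $(\Sigma g)\psi = 0$, and Remark \ref{2.1.1}(2)(ii) then factors $g$ through $j$, placing $g$ in $\CJ$. Completeness in (2) uses Theorem \ref{salce-lemma}(2). The only delicate step across both parts is this factorization; I expect it to be straightforward once one recognizes that the very phantom certifying the special precover or preenvelope property is exactly the map annihilated under the orthogonality hypothesis, so that exactness of the $n$-angle immediately yields the required factorization.
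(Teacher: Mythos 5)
Your argument is correct and follows the paper's proof essentially verbatim: the same $n$-angle from the special precover, the same factorization $\psi = (\Sigma j)\phi$ with $j \in \CI^{\perp}$, the same use of orthogonality to kill $\psi f$, and the same appeal to Remark \ref{2.1.1}(2)(iii) for the factorization through $i$, with completeness coming from Salce's Lemma exactly as in the paper. The only (harmless) difference is that you make explicit the intermediate appeal to Lemma \ref{3.3.2} before invoking Theorem \ref{salce-lemma}.
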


\begin{proof}
We have to show that $\CI={^{\perp}(\CI^{\perp})}$. The inclusion $\CI\subseteq {^{\perp}(\CI^{\perp})}$
is obvious.

Let $i':X'\to A$ be in ${^{\perp}(\CI^{\perp})}$. Since $\CI$ is a special precovering ideal,
we can choose an $n$-angle $$\xymatrix@=0.5cm{           
  \mathfrak{d}:& A_1\ar[r] & \cdots \ar[r]&A_{n-2}\ar[r]& X\ar[r]^i   & A\ar[r]^-{\psi}  & \Sigma A_1   }$$ such that $i$ is a special $\CI$-precover for $A$.
Then $\psi= (\Sigma j)\phi$ for some $j\in \CI^{\perp}$ and some $\phi\in\Ph(\frakE)$.
All these data are represented in the solid part of the following commutative
diagram: \[\xymatrix@=0.5cm{           
  B_1\ar[r]\ar[d]_j & B_2\ar[r]\ar[d]& \cdots \ar[r]& B_{n-2}\ar[r] \ar[d]& B_{n-1}\ar[r] \ar[d]   & A\ar[r]^-\phi  \ar@{=}[d]   & \Sigma B_1 \ar[d]^{\Sigma j}\\
   A_1\ar[r] &A_2\ar[r]& \cdots \ar[r]& A_{n-2}\ar[r] & X\ar[r]^i    & A\ar[r]^-{\psi}     & \Sigma A_1        \\
   &&&&&X'\ar[u]_{i'}\ar@{-->}[ul]^g&}\]

Because $i'\perp j$ we obtain
$\psi i'=(\Sigma j)\phi i'=0$, thus there exists a morphism $g:X'\to X$ such that $i'=ig$ by Remark \ref{2.1.1}(2)(iii).
Hence $i'\in\CI$, and therefore,  ${^{\perp}(\CI^{\perp})}\subseteq\CI$.

The second statement follows from Theorem \ref{salce-lemma}.
\end{proof}

{
\begin{example}\label{trivial-case-special}\label{ex-trivial}
If $\CA=\CT$ and $\frakE=\foa$, then every precovering ideal $\CI$ is special since
every $n$-angle $$\xymatrix@=0.5cm{           
  \mathfrak{d}:& A_1\ar[r] & \cdots \ar[r]&A_{n-2}\ar[r]& X\ar[r]^i   & A\ar[r]^-{\psi}  & \Sigma A_1   }$$
   can be embedded in a commutative diagram
\[\xymatrix@=0.5cm{           
  \Sigma^{-1}A\ar[r]\ar[d]_{\Sigma^{-1}\psi} & 0\ar[r]\ar[d]& \cdots \ar[r]& 0\ar[r] \ar[d]& 0\ar[r] \ar[d]   & A\ar[r]^{1_A}  \ar@{=}[d]   & A \ar[d]^{\psi}\\
   A_1\ar[r] &A_2\ar[r]& \cdots \ar[r]& A_{n-2}\ar[r] & X\ar[r]^i    & A\ar[r]^-{\psi}     & \Sigma A_1        }\]
   Indeed, assume that $i$ is an $\CI$-precover of $A$. For any $\phi:B\to A\in\CT$ and any $i': X'\to B\in\CI$, since $\CI$ is an ideal, $\phi i'\in\CI$, and hence there exists $h:X'\to X$ such that $ih=\phi i'$, and hence $\psi \phi i'=\psi i h=0$. Thus $\Sigma^{-1}\psi\in\CI^\perp$ which shows that
   $i$ is a special $\CI$-precover of $A$.

Dually, every preenveloping ideal in $\CT$ is special.

Therefore, for every precovering ideal $\CI$  we obtain that $(\CI,\CI^\perp)$ is a complete ideal cotorsion pair,
hence $\CI^\perp$ is
a special preenveloping ideal.
\end{example}

\section{Ideal cotorsion pairs and relative phantom ideals}\label{Section-relative-cotorsion-pairs}

In this section we extend the ideal cotorsion theory introduced in \cite{Fu-et-al} and \cite{BM} to $n$-angulated categories.

\subsection{Relative phantom ideals}
Given two almost $n$-exact structures $\frakF\subseteq\frakE$, we will construct the ideal of  phantom morphisms of $\frakF$ relative to $\frakE$.

\begin{definition} Let $\frakF$ be an almost $n$-exact structure for $\CA$ such that $\frakF\subseteq\frakE$.
A morphism $\phi:X\to A$ in $\CA$ is called an \textsl{$\frakF$-phantom relative to $\frakE$}, if $h\phi\in\Ph(\frakF)$,
whenever $h\in\Ph(\frakE) $. We denote by $$\PEF=\{\phi\mid h\phi\in\Ph(\frakF)\hbox{ for all }h\in\Ph(\frakE)\}$$
 the class of all
$\frakF$-phantoms  relative to $\frakE$.

Dually, a morphism $\psi:A\to X$ in $\CA$ is called an \textsl{$\frakF$-cophantom relative to $\frakE$}, if $\psi h\in\Sigma^{-1}\Ph(\frakF)$,
whenever $h\in\Sigma^{-1}\Ph(\frakE) $. We denote by $$\CPEF=\{\psi\mid \psi h\in\Sigma^{-1}\Ph(\frakF)\hbox{ for all }h\in\Sigma^{-1}\Ph(\frakE)\}$$
the class of all
$\frakF$-cophantoms  relative to $\frakE$.
\end{definition}

 The proof of the following lemma is straightforward.

\begin{lemma}
If $\frakF$ is an almost $n$-exact structure for $\CA$ and $\frakF\subseteq \frakE$, then $\PEF$ and $\CPEF$ are ideals in $\CA$.
\end{lemma}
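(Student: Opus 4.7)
The plan is to verify the two ideal axioms directly for both $\PEF$ and $\CPEF$: (i) closure under sums of parallel morphisms, and (ii) closure under composition on either side with arbitrary morphisms in $\CA^{\to}$. The entire argument rests on the fact, established via Proposition \ref{pseudo-ph-vs-wpc}, that $\Ph(\frakE)$ and $\Ph(\frakF)$ are phantom $\CA$-ideals, so they themselves are closed under sums and under the composition rule $\Sigma\CA^{\to}\Ph(-)\CA^{\to}\subseteq \Ph(-)$.

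For $\PEF$, closure under sums is immediate: if $\phi_1,\phi_2:X\to A$ lie in $\PEF$ and $h\in\Ph(\frakE)$ has domain $A$, then $h(\phi_1+\phi_2)=h\phi_1+h\phi_2$ lies in $\Ph(\frakF)$ because $\Ph(\frakF)$ is closed under sums. For composition, take $\phi\in\PEF$ and $f,g\in\CA^{\to}$ with $g\phi f$ defined, and let $h\in\Ph(\frakE)$ be composable on the left of $g\phi f$. Then one rewrites
\[
h(g\phi f)=(hg)\,\phi\, f.
\]
Here $hg\in\Ph(\frakE)$ by (P3) applied to $\Ph(\frakE)$ (right-composition with a morphism in $\CA^{\to}$), so $(hg)\phi\in\Ph(\frakF)$ by the defining property of $\PEF$; then right-composition by $f$ yields $(hg)\phi f\in\Ph(\frakF)$ by (P3) applied to $\Ph(\frakF)$.

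For $\CPEF$, the argument is the dual bookkeeping, and the only mildly subtle point is to track $\Sigma^{-1}$ properly. If $\psi\in\CPEF$ and $f,g\in\CA^{\to}$ are such that $g\psi f$ is defined, then for $h\in\Sigma^{-1}\Ph(\frakE)$ composable on the right with $g\psi f$, one writes $(g\psi f)h=g\,\psi(fh)$. The factor $fh$ belongs to $\Sigma^{-1}\Ph(\frakE)$ because $\Sigma(fh)=(\Sigma f)(\Sigma h)\in\Sigma\CA^{\to}\,\Ph(\frakE)\subseteq\Ph(\frakE)$; so $\psi(fh)\in\Sigma^{-1}\Ph(\frakF)$, and post-composing with $g$ preserves this via the analogous computation for $\Ph(\frakF)$. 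Sums are handled exactly as before.

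There is no real obstacle; the statement is essentially a compatibility check between the two phantom $\CA$-ideals $\Ph(\frakE)\supseteq\Ph(\frakF)$ and the ideal axioms. The only place requiring mild care is the $\CPEF$ case, where one must resist conflating the two-sided closure properties of ordinary ideals in $\CA$ with the asymmetric phantom-ideal closure $\Sigma\CA^{\to}\Ph(-)\CA^{\to}\subseteq \Ph(-)$; writing out the $\Sigma$'s explicitly as above avoids any confusion.
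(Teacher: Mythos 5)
Your proof is correct, and the paper itself omits the argument with the remark that it is straightforward. You carry out exactly the verification the authors leave implicit: both claims reduce to the fact, established in Proposition \ref{pseudo-ph-vs-wpc}, that $\Ph(\frakE)$ and $\Ph(\frakF)$ are phantom $\CA$-ideals, hence closed under sums and under the asymmetric composition rule $\Sigma\CA^{\to}\Ph(-)\CA^{\to}\subseteq\Ph(-)$; the associativity trick $h(g\phi f)=(hg)\phi f$ and its dual $(g\psi f)h=g\psi(fh)$ is the whole content, and your explicit tracking of $\Sigma$ in the $\CPEF$ case is the right way to make the dual half airtight.
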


\begin{remark}
\begin{enumerate}[{\rm (a)}]
  \item A morphism $\phi:X\to A$ belongs to $\PEF$ if and only if for every  morphism of $n$-angles
 in $\frakE$ along $\phi$
\[\xymatrix@=0.5cm{           
  \mathfrak{d}i:& A_1\ar[r]\ar@{=}[d] & B_2\ar[r]\ar[d]&B_3\ar[r]\ar[d]& \cdots \ar[r]& B_{n-1}\ar[r] \ar[d]   & X\ar[r]^-{h\phi}  \ar[d]^{\phi}  & \Sigma A_1 \ar@{=}[d]\\
  \mathfrak{d}: & A_1\ar[r] &A_2\ar[r]& A_3\ar[r]&\cdots \ar[r]& A_{n-1}\ar[r]    & A\ar[r]^-h     & \Sigma A_1         }\]
the top $n$-angle $\mathfrak{d}i$ is in $\frakF$.
  \item A morphism $\psi:A\to X$ belongs to $\CPEF$ if and only if for every morphism of $n$-angles
 in $\frakE$ along $\psi$
\[\xymatrix@=0.6cm{
\mathfrak{d}:& A\ar[d]^{\psi}\ar[r] &\ar[r]A_2\ar[r]\ar[d]&\cdots\ar[r]   & A_{n-1}\ar[d] \ar[r]& A_{n}\ar@{=}[d] \ar[r]^{\Sigma h} & \Sigma A \ar[d]^{\Sigma \psi} \\
\psi \mathfrak{d}:& X\ar[r]  &B_2\ar[r]&\cdots\ar[r]  & B_{n-1}          \ar[r] & A_n \ar[r]^{\Sigma (\psi h)} & \Sigma X
}\]
the bottom $n$-angle $\psi \mathfrak{d}$ is in $\frakF$.
\end{enumerate}
\end{remark}

Remark that $\PEF$ and $\CPEF$ are ideals in $\CA$ whereas $\Ph(\frakE)$ is a
phantom $\CA$-ideal.

\begin{theorem}\label{orth-F}
Let $\frakF$ be an almost $n$-exact structure for $\CA$ such that $\frakF\subseteq \frakE$. \begin{enumerate}[{\rm (1)}]
\item \begin{enumerate}[{\rm (a)}] \item
The pair $(\Phi_\frakE(\frakF), \frakF\textrm{-}\mathrm{inj})$ is orthogonal.

\item Assume that there are enough $\frakF$-injective morphisms. Then ${^{\perp} \frakF\textrm{-}\mathrm{inj}}=
\PEF$.
\end{enumerate}
\item \begin{enumerate}[{\rm (a)}] \item
The pair $(\frakF\textrm{-}\mathrm{proj}, \CPEF)$ is orthogonal.

\item Asssume that there are enough $\frakF$-projective morphisms. Then ${ \frakF\textrm{-}\mathrm{proj}^{\perp}}=
\CPEF$.
\end{enumerate}
\end{enumerate}
\end{theorem}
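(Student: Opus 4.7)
The two parts of the theorem are formally dual (replacing inflations by deflations, $\Ph(\frakE)$ by $\Sigma^{-1}\Ph(\frakE)$, left orthogonal by right orthogonal, and so on), so I will describe only (1) in detail and indicate the dualisation for (2).

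For (1)(a), let $\phi \in \PEF$ and $g \in \frakF\inj$. I must verify $\phi \perp g$, i.e.\ $(\Sigma g)h\phi = 0$ for every $h \in \Ph(\frakE)$. By the definition of $\PEF$ the composition $h\phi$ lies in $\Ph(\frakF)$; and by Lemma~\ref{basic-CI-inj}(b) applied to the almost $n$-exact structure $\frakF$, the condition $g \in \frakF\inj$ is equivalent to $(\Sigma g)\Ph(\frakF) = 0$. Composing these two facts yields $(\Sigma g)h\phi = 0$, so $\phi \perp g$.

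For (1)(b), the inclusion $\PEF \subseteq {^\perp}(\frakF\inj)$ is immediate from (a). For the reverse inclusion, fix $\phi: X \to A$ in ${^\perp}(\frakF\inj)$ and an arbitrary $h: A \to \Sigma A_1$ in $\Ph(\frakE)$; I need $h\phi \in \Ph(\frakF)$. Using the hypothesis of enough $\frakF$-injective morphisms at the object $A_1 \in \CA$, choose an $n$-angle
\[ A_1 \xrightarrow{e} E_2 \to \cdots \to E_n \xrightarrow{\beta} \Sigma A_1 \]
in $\frakF$ with $e$ being $\frakF$-injective. Since $e \in \frakF\inj$ and $\phi \in {^\perp}(\frakF\inj)$ we have $\phi \perp e$, so in particular $(\Sigma e)(h\phi) = 0$. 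Applying the cohomological functor $\CT(X,-)$ to the displayed $n$-angle, which is exact by Remark~\ref{2.1.1}(2), produces a long exact sequence in which the kernel of $(\Sigma e)_{*}: \CT(X,\Sigma A_1) \to \CT(X,\Sigma E_2)$ coincides with the image of $\beta_{*}$. Hence $h\phi = \beta k$ for some $k: X \to E_n$. As $\beta \in \Ph(\frakF)$ and $k \in \CA^{\to}$, the phantom $\CA$-ideal property of $\Ph(\frakF)$ (Proposition~\ref{pseudo-ph-vs-wpc}) gives $\beta k \in \Ph(\frakF)$, so $h\phi \in \Ph(\frakF)$ as required.

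For (2), the argument is dual: in (2)(a) take $g \in \frakF\proj$ and $\psi \in \CPEF$, use Lemma~\ref{basic-CI-inj}(a) to rewrite $g \in \frakF\proj$ as $\Ph(\frakF)g = 0$, and unpack the definition of $\CPEF$ (writing any $\phi \in \Ph(\frakE)$ as $\Sigma(\Sigma^{-1}\phi)$) to see that $(\Sigma\psi)\phi \in \Ph(\frakF)$ for every $\phi \in \Ph(\frakE)$, so that $(\Sigma\psi)\phi g = 0$. In (2)(b), given a morphism $\eta: B \to \Sigma A$ in $\Ph(\frakE)$, select by the dual hypothesis an $n$-angle $B_1 \to \cdots \to P \xrightarrow{p} B \xrightarrow{\gamma} \Sigma B_1$ in $\frakF$ with $p$ an $\frakF$-projective $\frakF$-deflation, use $\psi \in {\frakF\proj}^{\perp}$ to obtain $(\Sigma\psi)\eta p = 0$, and factor $(\Sigma\psi)\eta$ through $\gamma$ via the long exact sequence of $\CT(-,\Sigma X)$ applied to this $n$-angle; Proposition~\ref{pseudo-ph-vs-wpc} again forces the resulting composite into $\Ph(\frakF)$. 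The main obstacle in both (1)(b) and (2)(b) is this factorisation step through the phantom of the chosen $n$-angle; correctly choosing the starting (respectively ending) object for the enough-injectives (respectively enough-projectives) hypothesis and invoking exactness of the cohomological functor is what drives the argument, and everything else is a direct chase of the definitions.
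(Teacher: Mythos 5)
Your proof of (1)(a) is the same as the paper's and is correct. For (1)(b) (and dually (2)(b)) you take a genuinely different route. The paper fixes $f:X\to A$ in ${^{\perp}\frakF\textrm{-inj}}$, $\varphi:A\to\Sigma B_1$ in $\Ph(\frakE)$, and an $\frakF$-injective $\frakF$-inflation $e:B_1\to C$, and then forms the three-row diagram obtained by pulling back $\mathfrak{d}$ along $f$ and pushing out along $e$. Since $(\Sigma e)\varphi f=0$, the bottom $n$-angle splits, giving a retraction $c_1'$ of $c_1$; setting $z=c_1'e_2$ one gets $zi=e$, and Lemma~\ref{infl-defl-factors}(1) upgrades $i$ from a $\frakD_\CA$-inflation to an $\frakF$-inflation, so that $\mathfrak{d}f\in\frakF$ and hence $\varphi f\in\Ph(\frakF)$. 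You instead choose an $n$-angle $A_1\xrightarrow{e}E_2\to\cdots\to E_n\xrightarrow{\beta}\Sigma A_1$ in $\frakF$ with $e$ an $\frakF$-injective $\frakF$-inflation, note $(\Sigma e)h\phi=0$ from $\phi\perp e$, and use the long exact sequence from $\CT(X,-)$ (Remark~\ref{2.1.1}(2)) to factor $h\phi=\beta k$; since $\beta\in\Ph(\frakF)$ and $\Ph(\frakF)$ is a phantom $\CA$-ideal (Proposition~\ref{pseudo-ph-vs-wpc}), you get $h\phi\in\Ph(\frakF)$ directly. Your route is shorter and avoids both the three-row diagram chase and Lemma~\ref{infl-defl-factors}, trading the paper's explicit construction of an $n$-angle in $\frakF$ with phantom $\varphi f$ for a purely ideal-theoretic factorization argument; both are valid. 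Your treatment of part (2) as the formal dual is also sound.
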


\begin{proof}
We only prove (1).
(a) Let $e:B\to Y$ be an $\frakF$-injective morphism, $f:X\to A\in\Phi_\frakE(\frakF)$, and $\varphi:A\to \Sigma B\in\Ph(\frakE)$.
We have $\varphi f\in\Ph(\frakF)$, hence $(\Sigma e)\varphi f=0$ since $e$ is $\frakF$-injective. Then $f\perp e$.

(b) By (a), it is enough to show that
${^{\perp} \frakF\textrm{-}\mathrm{inj}}\subseteq \Phi_\frakE(\frakF)$.
In order to do this, let  $f:X\to A\in{^{\perp}\frakF\textrm{-}\mathrm{inj}}$ and
$\varphi:A\to \Sigma B\in\Ph(\frakE)$. Let $e:B\to C$ be an $\frakF$-injective $\frakF$-inflation.

Consider the following commutative
diagram
\[ \xymatrix@C=0.7cm@R=0.5cm{ B_1\ar[r]\ar@{=}[d] & A_2\ar[r]\ar@{<-}[d]&\cdots\ar[r]& A_{n-1} \ar[r]\ar@{<-}[d]  & A \ar[r]^-{\varphi}\ar@{<-}[d]^{f} & \Sigma B_1\ar@{=}[d]\\
B_1\ar[r]^i\ar[d]_{e} & B_2\ar[r]\ar[d]^{e_2}\ar@{-->}[dl]_z &\cdots\ar[r]& B_{n-1} \ar[r]\ar[d] & B \ar[r]^-{\varphi f}\ar@{=}[d] & \Sigma B_1\ar[d]^{\Sigma g} \\
C\ar[r]^{c_1}\ar@/_/ @{<--}[r]_{c_1'}  & C_2\ar[r]&\cdots\ar[r] & C_{n-1} \ar[r]& B \ar[r]^{(\Sigma e)\varphi f} & \Sigma C
}\]
Since $(\Sigma e)\varphi f=0$, it follows that the bottom $n$-angle splits. By Remark \ref{2.1.1}(3), there exists a morphism $c_1':C_2\to C$ such that
$c_1'c_1=1_C$. Setting $z:=c_1'e_2$. Then  $zi=c_1'e_2 i=c_1'c_1e=e$.
Since $e$ is an $\frakF$-inflation, by Lemma \ref{infl-defl-factors} it follows that $i$ is an $\frakF$-inflation,
hence $\varphi f\in\Ph(\frakF)$.
Therefore,  $f\in \Phi_\frakE(\frakF)$, and hence ${^{\perp} \frakF\textrm{-}\mathrm{inj}}\subseteq \Phi_\frakE(\frakF)$.
\end{proof}



The following result shows us that $\frakE$-projective $\frakE$-deflations (respectively, $\frakE$-injective $\frakE$-inlations)
are test maps for $\frakF$-phantoms relative to $\frakE$ (respectively, $\frakF$-cophantoms  relative to $\frakE$).

\begin{proposition}\label{rel-phantom-vs-proj}
{\rm (1)} Let $$A_1\to \cdots\to A_{n-2}\to P\overset{p}\to A\overset{\psi}\to \Sigma A_1$$ be an $n$-angle in $\frakE$ such that $p$ is $\frakE$-projective.
A morphism $\varphi:X\to A$ belongs to $\Phi_\frakE(\frakF)$ 
if and only if $\psi\varphi$ belongs to $\Ph(\frakF)$.

{\rm (2)} Let $$ A\overset{e}\to E\to A_3\to \cdots\to A_n\overset{\psi}\to \Sigma A$$ be an $n$-angle in $\frakE$ such that
$e$ is $\frakE$-injective.
A morphism $\varphi:A\to Y$ belongs to $\Phi^\frakE(\frakF)$ if and only if $(\Sigma \varphi)\psi$ belongs to $\Ph(\frakF)$.
\end{proposition}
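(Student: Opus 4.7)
The plan is to reduce both parts to exactness of $n$-angles combined with the characterization of $\frakE$-projective/$\frakE$-injective morphisms from Lemma \ref{basic-CI-inj} together with property (P3) of phantom $\CA$-ideals. For part (1), the forward direction is immediate: $\psi$ is the $\frakE$-phantom of an $n$-angle in $\frakE$, so $\psi \in \Ph(\frakE)$, and hence $\psi\varphi \in \Ph(\frakF)$ follows directly from the definition of $\Phi_\frakE(\frakF)$.

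For the converse in (1), fix an arbitrary $h \in \Ph(\frakE)$ with $h\colon A \to \Sigma B$ and $B \in \CA$; the goal is to deduce $h\varphi \in \Ph(\frakF)$. The $\frakE$-projectivity of $p$ gives $hp = 0$ by Lemma \ref{basic-CI-inj}(a). Left-rotating the given $n$-angle so that $p$ sits at an internal index, Remark \ref{2.1.1}(2)(ii) then yields a factorization $h = g\psi$ for some $g\colon \Sigma A_1 \to \Sigma B$. Since $\Sigma$ is an autoequivalence of $\CT$ and $\CA$ is a full subcategory with $A_1, B \in \CA$, we may write $g = \Sigma g_0$ for a unique $g_0\colon A_1 \to B$ in $\CA^\to$. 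Consequently
\[
h\varphi \;=\; (\Sigma g_0)\,\psi\varphi,
\]
and since $\psi\varphi \in \Ph(\frakF)$ by hypothesis, property (P3) of the phantom $\CA$-ideal $\Ph(\frakF)$ forces $h\varphi \in \Ph(\frakF)$. As $h$ was arbitrary, $\varphi \in \Phi_\frakE(\frakF)$.

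Part (2) is the exact dual: the forward direction is immediate from $\psi \in \Ph(\frakE)$ and the definition of $\Phi^\frakE(\frakF)$. For the converse, given any $k \in \Ph(\frakE)$ with codomain $\Sigma A$ and domain $Z \in \CA$, Lemma \ref{basic-CI-inj}(b) and the $\frakE$-injectivity of $e$ give $(\Sigma e)k = 0$. Exactness of the $n$-angle (again via Remark \ref{2.1.1}(2), applied after rotating so that $\psi$ lies at an internal index) yields $k = \psi g$ for some $g\colon Z \to A_n$ in $\CA^\to$. Then $(\Sigma\varphi)k = ((\Sigma\varphi)\psi)g$ lies in $\Ph(\frakF)$ by (P3), finishing the argument.

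The main subtle point is the ``wrong-end'' application of Remark \ref{2.1.1}(2): one must rotate the $n$-angle so that $p$ (respectively, $\psi$) sits at an interior index where the exactness clause of the remark directly applies, and then translate the factorization back using the automorphism $\Sigma$ to guarantee that the outer factor belongs to $\Sigma\CA^\to$ (respectively, $\CA^\to$), so that closure property (P3) of $\Ph(\frakF)$ can be invoked.
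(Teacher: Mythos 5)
Your proof is correct and follows essentially the same route as the paper: both use Lemma \ref{basic-CI-inj} to get $\zeta p = 0$ (respectively $(\Sigma e)k = 0$), exactness of the $n$-angle to factor $\zeta$ through $\psi$ (respectively $k$ through $\psi$), and the closure property of $\Ph(\frakF)$ to conclude. You are simply more explicit about the rotation needed to put Remark \ref{2.1.1}(2)(ii) in scope and about why the outer factor lies in $\Sigma\CA^\to$; the paper's proof performs these same steps implicitly.
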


\begin{proof} We only prove (1).
Suppose that $\psi\varphi\in\Ph(\frakF)$.
We have to show that  $\zeta\varphi\in \Ph(\frakF)$ for every morphism $\zeta:A\to \Sigma B$ in $\Ph(\frakE)$.

Let $\zeta:A\to \Sigma B$ be in $\Ph(\frakE)$.
Since $p$ is $\frakE$-projective, we have $\zeta p=0$ by Lemma \ref{basic-CI-inj},
hence there exists a morphism $\Sigma g:\Sigma A_1\to \Sigma B$ in $\Sigma\CA$ such that $(\Sigma g)\psi=\zeta$.  Moreover, we have $\psi\varphi\in\Ph(\frakF)$,
and it follows that $\zeta\varphi=(\Sigma g)\psi\varphi\in\Ph(\frakF)$, hence $\varphi\in\Phi_\frakE(\frakF)$.

The converse is obvious by applying the definition of $\Phi_\frakE(\frakF)$.
\end{proof}

}

\subsection{The pullback and pushout almost $n$-exact stuctures}
In this subsection we intend to construct almost $n$-exact structures which are included in $\frakE$  from ideals in $\CA$.

The \textsl{pullback almost $n$-exact structure associated to an ideal $\CI$} in $\CA$ is defined to be the  class
\begin{align*}
  \mathfrak{PB}_\frakE(\CI)= & \left\{\mathfrak{d}\in\frakE\mid \mbox{the $\frakE$-phantom } \phi \mbox{ of } \mathfrak{d} \mbox{ can be written as }\phi=\psi i \right. \\
   & \left.\mbox{ with }i\in\mathcal{I} \mbox{ and }\psi\in\Ph(\frakE)\right\},
\end{align*}
the corresonding phantom $\CA$-ideal is $\PB_\frakE(\CI):=\Ph(\mathfrak{PB}_\frakE(\CI))=\Ph(\frakE)\CI$.

Dually, we define \textsl{the pushout almost $n$-exact structure associated to an ideal $\CJ$} of $\CA$
 \begin{align*}
  \mathfrak{PO}_\frakE(\CI)= & \left\{\mathfrak{d}\in\frakE\mid \mbox{the $\frakE$-phantom } \phi \mbox{ of } \mathfrak{d} \mbox{ can be written as }\phi= (\Sigma j)\psi \right. \\
   & \left.\mbox{ with }j\in\mathcal{J} \mbox{ and }\psi\in\Ph(\frakE)\right\},
\end{align*}
 the corresponding phantom $\CA$-ideal
$\PO_\frakE(\CJ):=\Ph(\mathfrak{PO}_\frakE(\CI))=(\Sigma\CJ)\Ph(\frakE)$.

\begin{proposition}\label{fantome-prin-proiective}
Let $\frakF$ be an almost $n$-exact structure for $\CA$ such that $\frakF\subseteq \frakE$, and let $\CI$ be an ideal in $\CA$.
\begin{enumerate}[{\rm (1)}]
\item Assume that there exist enough $\frakF$-projective morphisms. Then the following are equivalent:
\begin{enumerate}[{\rm (a)}]
\item $\CI\subseteq \PEF$;
\item $\CI(\frakF\proj)\subseteq \frakE\proj$.
\end{enumerate}
\item Assume that there exist enough $\frakF$-injective homomorphisms. Then the following are equivalent:
\begin{enumerate}[{\rm (a)}]
\item $\CI\subseteq\CPEF$;
\item $(\frakF\inj)\CI\subseteq \frakE\inj$.
\end{enumerate}
\end{enumerate}
\end{proposition}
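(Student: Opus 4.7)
The plan is to verify both implications in part (1) directly from the definitions and the exactness of $n$-angles (Remark \ref{2.1.1}(2)); part (2) then follows by a dual argument. A useful preliminary observation is that $\frakF\subseteq\frakE$ implies $\Ph(\frakF)\subseteq\Ph(\frakE)$, and that any $\frakF$-projective $\frakF$-deflation $p$ fits into an $n$-angle in $\frakF$ whose $\frakF$-phantom $\sigma$ automatically lies in $\Ph(\frakF)$.

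The direction (a)$\Rightarrow$(b) is routine. Given $i\in\CI\subseteq\PEF$ of the form $i:X\to A$ and $f:Y\to X$ in $\frakF\proj$, every $h\in\Ph(\frakE)$ satisfies $hi\in\Ph(\frakF)$ by the definition of $\PEF$, and then $(hi)f=0$ by Lemma \ref{basic-CI-inj}(a), since $f$ being $\frakF$-projective means it annihilates $\Ph(\frakF)$. As $h$ was arbitrary, $\Ph(\frakE)(if)=0$, so another application of Lemma \ref{basic-CI-inj}(a) gives $if\in\frakE\proj$.

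The direction (b)$\Rightarrow$(a) is where the substantive work lies. Fix $i:X\to A$ in $\CI$ and $h:A\to\Sigma B$ in $\Ph(\frakE)$; the task is to show $hi\in\Ph(\frakF)$. Using enough $\frakF$-projective morphisms, select an $n$-angle
$$X_1\to\cdots\to X_{n-2}\to P\xrightarrow{p}X\xrightarrow{\sigma}\Sigma X_1$$
in $\frakF$ with $p$ an $\frakF$-projective $\frakF$-deflation, so that $\sigma\in\Ph(\frakF)$. The hypothesis (b) applied to $i\in\CI$ and $p\in\frakF\proj$ yields $ip\in\frakE\proj$, and hence $h(ip)=(hi)p=0$ by Lemma \ref{basic-CI-inj}(a). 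Because $n$-angles are exact (Remark \ref{2.1.1}(2)), the vanishing $(hi)p=0$ produces $\overline{g}:\Sigma X_1\to\Sigma B$ with $hi=\overline{g}\sigma$; since $\Sigma$ is an automorphism of $\CT$, we may write $\overline{g}=\Sigma g_0$ for a unique $g_0:X_1\to B$. Since $\Ph(\frakF)$ is a phantom $\CA$-ideal (Proposition \ref{pseudo-ph-vs-wpc}) and $\sigma\in\Ph(\frakF)$, axiom (P3) yields $hi=(\Sigma g_0)\sigma\in\Ph(\frakF)$, so $i\in\PEF$. The main technical point is the choice of this auxiliary $\frakF$-angle: it is precisely what converts the annihilation $\Ph(\frakE)(ip)=0$ (available from (b)) into membership of $hi$ in the strictly smaller ideal $\Ph(\frakF)$, and this is the only nontrivial step in the proof.

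Part (2) is proved by a verbatim dualization: for each $Y$, choose an $n$-angle $Y\xrightarrow{e}E\to\cdots\to Y_n\xrightarrow{\tau}\Sigma Y$ in $\frakF$ with $e$ an $\frakF$-injective $\frakF$-inflation (so $\tau\in\Ph(\frakF)$), interchange pre- and post-composition throughout the argument above, and apply the exactness of $\CT(Y,-)$ on this angle together with the $\Sigma^{-1}$-formulation used in the definition of $\CPEF$. No new obstacles arise.
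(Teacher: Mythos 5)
Your proof is correct. For the direction (a)$\Rightarrow$(b) you argue essentially as the paper does: unwind the ideal containments $\Ph(\frakE)\CI\subseteq\Ph(\frakF)$ and $\Ph(\frakF)(\frakF\proj)=0$ via Lemma \ref{basic-CI-inj}(a). The interesting divergence is in (b)$\Rightarrow$(a). The paper's proof introduces the pullback almost $n$-exact structure $\mathfrak{PB}_\frakE(\CI)$, shows $\frakF\proj\subseteq\mathfrak{PB}_\frakE(\CI)\proj$, and then invokes Corollary \ref{cor-prop1-suficiente} (which in turn rests on Proposition \ref{prop1-suficiente}) to conclude $\mathfrak{PB}_\frakE(\CI)\subseteq\frakF$, from which $\Ph(\frakE)\CI\subseteq\Ph(\frakF)$ is immediate. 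You instead argue at the level of an individual $i\in\CI$ and $h\in\Ph(\frakE)$: pick an auxiliary $\frakF$-angle with $\frakF$-projective $\frakF$-deflation $p:P\to X$, use hypothesis (b) to get $ip\in\frakE\proj$ and hence $(hi)p=0$, then factor $hi=(\Sigma g_0)\sigma$ through the $\frakF$-phantom $\sigma$ of the auxiliary angle by the exactness of $n$-angles, and close with the phantom-ideal axiom (P3). This is in effect the proof of Corollary \ref{cor-prop1-suficiente} unrolled inline, and it is a legitimate alternative: the paper's route is more structural and reuses machinery ($\mathfrak{PB}_\frakE(\CI)$, the Galois connection) that reappears throughout Section 4, while your route is more elementary and self-contained, requiring only Lemma \ref{basic-CI-inj}, the long exact sequence for $n$-angles, and the phantom-ideal axioms. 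Both establish the same equivalence; neither has a gap.
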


\begin{proof} We only prove (1).

(a)$\Rightarrow$(b) Assume $\CI\subseteq \PEF$. Then $\Ph(\frakE)\CI\subseteq \Ph(\frakE)\PEF\subseteq \Ph(\frakF)$. Thus $\Ph(\frakE)\CI(\frakF\proj)\subseteq \Ph(\frakF)(\frakF\proj)=0$,
 and hence $\CI(\frakF\proj)\subseteq \frakE\proj$ by Lemma \ref{basic-CI-inj}.

(b)$\Rightarrow$(a) Assume $\CI(\frakF\proj)\subseteq \frakE\proj$. Then
$${\PB}_\frakE(\CI)(\frakF\proj)= \Ph(\frakE) \CI (\frakF\proj)\subseteq \Ph(\frakE) (\frakE\proj)=0,$$
thus $\frakF\proj\subseteq \mathfrak{PB}_\frakE(\CI)\proj$. By Corollary \ref{cor-prop1-suficiente},
$\mathfrak{PB}_\frakE(\CI)\subseteq \frakF$.
Now let $i\in\mathcal{I}$. For any $\psi\in \Ph(\frakE)$, $\psi i\in\Ph(\frakE)\CI=\Ph(\mathfrak{PB}_\frakE(\CI))\subseteq \Ph(\frakF)$, which means $i\in  \PEF$.
Thus $\CI\subseteq \PEF$.
\end{proof}

We denote by $\mathbf{Ideals}(\CA)$  the class
 of all ideals in $\CA$, and by $\mathbf{Ex}(\frakE)$ the class of all  almost $n$-exact structures included in $\frakE$.
 We will construct two Galois correspondences between $\mathbf{Ideals}(\CA)$ and $\mathbf{Ex}(\frakE)$
for $\CA$.

\begin{theorem}\label{Galois-correspondences}

 The pairs of correspondences %
  $$\mathfrak{PB}_\frakE:\mathbf{Ideals}(\CA)\rightleftarrows \mathbf{Ex}(\frakE):{\Phi}_\frakE,$$
 respectively
 $$\mathfrak{PO}_\frakE:\mathbf{Ideals}(\CA)\rightleftarrows \mathbf{Ex}(\frakE):{\Phi}^\frakE,$$
 determine two monotone Galois connections with respect to inclusion.
 \end{theorem}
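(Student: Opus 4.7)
The plan is to verify directly the defining adjunction property of a monotone Galois connection for each pair, namely $F(a)\leq b \iff a\leq G(b)$. For the first pair, I would establish the equivalence
$$\mathfrak{PB}_\frakE(\CI) \subseteq \frakF \iff \CI \subseteq \Phi_\frakE(\frakF)$$
for every ideal $\CI$ of $\CA$ and every $\frakF \in \mathbf{Ex}(\frakE)$, and observe that both sides translate to the same containment $\Ph(\frakE)\CI \subseteq \Ph(\frakF)$ at the level of phantom $\CA$-ideals. Indeed, from the construction of the pullback almost $n$-exact structure one has $\Ph(\mathfrak{PB}_\frakE(\CI)) = \Ph(\frakE)\CI$, and under the 1-to-1 correspondence between almost $n$-exact structures included in $\frakE$ and phantom $\CA$-ideals contained in $\Ph(\frakE)$ (Proposition \ref{pseudo-ph-vs-wpc}) the inclusion $\mathfrak{PB}_\frakE(\CI) \subseteq \frakF$ becomes $\Ph(\frakE)\CI \subseteq \Ph(\frakF)$. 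On the other hand, by the definition of $\Phi_\frakE(\frakF)$, the condition $\CI \subseteq \Phi_\frakE(\frakF)$ states precisely that $h i \in \Ph(\frakF)$ for every $h \in \Ph(\frakE)$ and every $i \in \CI$, that is, $\Ph(\frakE)\CI \subseteq \Ph(\frakF)$.

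Monotonicity of both $\mathfrak{PB}_\frakE$ and $\Phi_\frakE$ with respect to inclusion is immediate from the definitions: enlarging $\CI$ allows strictly more admissible factorisations $\phi = \psi i$ of an $\frakE$-phantom, and enlarging $\frakF$ weakens the constraint $h\phi \in \Ph(\frakF)$ appearing in the definition of $\Phi_\frakE$. Together with the displayed equivalence, this is the standard characterisation of a monotone Galois connection, so the first half is complete.

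For the second pair $(\mathfrak{PO}_\frakE, \Phi^\frakE)$, the argument is formally dual. I would establish
$$\mathfrak{PO}_\frakE(\CJ) \subseteq \frakF \iff \CJ \subseteq \Phi^\frakE(\frakF),$$
observing that both sides are equivalent to $(\Sigma\CJ)\,\Ph(\frakE) \subseteq \Ph(\frakF)$: the left-hand side via the identity $\Ph(\mathfrak{PO}_\frakE(\CJ)) = (\Sigma\CJ)\,\Ph(\frakE)$ combined with Proposition \ref{pseudo-ph-vs-wpc}, and the right-hand side by unfolding the definition of $\Phi^\frakE(\frakF)$, where a morphism $j$ lies in $\Phi^\frakE(\frakF)$ iff $jh \in \Sigma^{-1}\Ph(\frakF)$ for all $h\in\Sigma^{-1}\Ph(\frakE)$, equivalently iff $(\Sigma j)\psi\in\Ph(\frakF)$ for all $\psi\in\Ph(\frakE)$. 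Monotonicity is again immediate from the definitions.

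I do not anticipate any serious obstacle. The only honest content is the pair of identities $\Ph(\mathfrak{PB}_\frakE(\CI)) = \Ph(\frakE)\CI$ and $\Ph(\mathfrak{PO}_\frakE(\CJ)) = (\Sigma\CJ)\,\Ph(\frakE)$, which are direct from the definitions, together with the appeal to the correspondence of Proposition \ref{pseudo-ph-vs-wpc} to reduce comparisons of almost $n$-exact structures to comparisons of their phantom $\CA$-ideals. If anything requires mild care, it is merely the $\Sigma/\Sigma^{-1}$ bookkeeping in the definition of $\Phi^\frakE$, which is purely notational.
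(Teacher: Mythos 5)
Your proposal is correct and takes essentially the same route as the paper: both reduce the adjunction inclusion on each side to the single containment $\Ph(\frakE)\CI \subseteq \Ph(\frakF)$ (respectively $(\Sigma\CJ)\Ph(\frakE) \subseteq \Ph(\frakF)$), using $\Ph(\mathfrak{PB}_\frakE(\CI)) = \Ph(\frakE)\CI$ together with the bijection between almost $n$-exact structures and phantom $\CA$-ideals from Proposition \ref{pseudo-ph-vs-wpc}. You spell out monotonicity and the dual case a bit more explicitly than the paper, but the substance is identical.
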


\begin{proof}
Let $\CI\in \mathbf{Ideals}(\CA)$  and  $\frakF\in\mathbf{Ex}(\frakE)$. We have to prove that
$\mathfrak{PB}_\frakE(\CI)\subseteq \frakF$ if and only if $\CI\subseteq\PEF$.

The inclusion
$$\CI\subseteq \PEF=\{\phi\mid h\phi\in\Ph(\frakF)\hbox{ for all }h\in\Ph(\frakE)\}$$
is equivalent to $\Ph(\frakE)\CI\subseteq \Ph(\frakF)$. Since $\Ph(\frakE)\CI=\PB_\frakE(\CI)$, the last inclusion is equivalent
to $\mathfrak{PB}_\frakE(\CI)\subseteq \frakF$.

The proof for the second pair is similar.
\end{proof}

Using the standard properties of Galois connections we have the following corollary.
\begin{corollary}\label{connection-PB-phantoms}
Let $\CI\in \mathbf{Ideals}(\CA)$  and  $\frakF\in\mathbf{Ex}(\frakE)$.
\begin{enumerate}[{\rm (1)}] \item $\mathfrak{PB}_\frakE(\PEF)\subseteq \frakF$ and
$\CI\subseteq{\Phi}_\frakE(\mathfrak{PB}_\frakE(\CI))$;

\item $\mathfrak{PO}_\frakE(\CPEF)\subseteq \frakF$ and
$\CI\subseteq{\Psi}_\frakE(\mathfrak{PO}_\frakE(\CI))$.
\end{enumerate}
\end{corollary}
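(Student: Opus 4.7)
The plan is to obtain both statements directly from Theorem \ref{Galois-correspondences} by invoking the unit/counit inequalities that any monotone Galois connection satisfies. No additional ingredients are needed; the argument is purely formal once the two biconditionals from that theorem are on the table.

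Explicitly, Theorem \ref{Galois-correspondences} supplies, for all $\CI\in\mathbf{Ideals}(\CA)$ and $\frakF\in\mathbf{Ex}(\frakE)$, the equivalence
\[
\mathfrak{PB}_\frakE(\CI)\subseteq \frakF \iff \CI\subseteq {\Phi}_\frakE(\frakF).
\]
For part (1), I will apply this equivalence twice with appropriate substitutions. Setting $\CI:=\PEF$ makes the right-hand side the tautology $\PEF\subseteq\PEF$, so the left-hand side yields $\mathfrak{PB}_\frakE(\PEF)\subseteq\frakF$. Setting instead $\frakF:=\mathfrak{PB}_\frakE(\CI)$ makes the left-hand side a tautology, and the right-hand side delivers $\CI\subseteq{\Phi}_\frakE(\mathfrak{PB}_\frakE(\CI))$.

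Part (2) is identical in structure, using the second biconditional
\[
\mathfrak{PO}_\frakE(\CI)\subseteq \frakF \iff \CI\subseteq \CPEF
\]
from the same theorem: the substitutions $\CI:=\CPEF$ and $\frakF:=\mathfrak{PO}_\frakE(\CI)$ produce, respectively, $\mathfrak{PO}_\frakE(\CPEF)\subseteq\frakF$ and $\CI\subseteq{\Phi}^\frakE(\mathfrak{PO}_\frakE(\CI))$. There is no real obstacle: once Theorem \ref{Galois-correspondences} is available, the corollary is simply the unit--counit pair of a monotone Galois connection, and the only bookkeeping is to note that $\PEF,\CPEF$ lie in $\mathbf{Ideals}(\CA)$ and $\mathfrak{PB}_\frakE(\CI),\mathfrak{PO}_\frakE(\CI)$ lie in $\mathbf{Ex}(\frakE)$, so that the substitutions are of the correct type.
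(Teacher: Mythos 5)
Your proof is correct and is precisely what the paper means by ``standard properties of Galois connections'': the two inclusions are the unit and counit of the adjunction established in Theorem~\ref{Galois-correspondences}, obtained by plugging the trivial inclusions $\PEF\subseteq\PEF$ and $\mathfrak{PB}_\frakE(\CI)\subseteq\mathfrak{PB}_\frakE(\CI)$ into the biconditional. You also correctly read the $\Psi$ in the statement of part (2) as $\Phi^\frakE$, matching the paper's usage elsewhere.
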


The following results exhibit connections between orthogonal ideals and some injective/projective properties.

\begin{proposition}\label{I-perp-ex-PB}
\begin{enumerate}[{\rm (1)}]
  \item Let $\CI\in \mathbf{Ideals}(\CA)$. Then $\CI^\perp=\mathfrak{PB}_\frakE(\CI)\inj$.
  \item Let $\CJ\in \mathbf{Ideals}(\CA)$. Then $^\perp \CJ=\mathfrak{PO}_\frakE(\CJ)\proj$.
\end{enumerate}
\end{proposition}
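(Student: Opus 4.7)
The proof is really just a matter of unravelling definitions and invoking Lemma~\ref{basic-CI-inj}. I will only describe (1); statement (2) is formally dual, obtained by swapping the roles of inflations/deflations, projectives/injectives, and the two factors in the composition defining the phantom ideal.

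The plan for (1) is the following. First, I would record what the phantom $\CA$-ideal of $\mathfrak{PB}_\frakE(\CI)$ is: directly from the definition of the pullback almost $n$-exact structure,
\[
\Ph\bigl(\mathfrak{PB}_\frakE(\CI)\bigr)=\Ph(\frakE)\,\CI,
\]
i.e.\ the class of composites $\psi i$ with $i\in\CI$ and $\psi\in\Ph(\frakE)$. Next I would translate the right-hand side $\mathfrak{PB}_\frakE(\CI)\inj$ using Lemma~\ref{basic-CI-inj}(b): a morphism $g:A\to Y$ in $\CA$ is $\mathfrak{PB}_\frakE(\CI)$-injective if and only if $(\Sigma g)\,\Ph(\mathfrak{PB}_\frakE(\CI))=0$, which by the previous line is equivalent to
\[
(\Sigma g)\,\psi\, i=0\qquad\text{for all }i\in\CI\text{ and all }\psi\in\Ph(\frakE).
\]

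On the other hand, I would unpack the left-hand side $\CI^\perp$ directly from its definition: $g\in\CI^\perp$ means $i\perp g$ for every $i\in\CI$, which by the defining formula of orthogonality is
\[
(\Sigma g)\,\psi\, i=0\qquad\text{for all }i\in\CI\text{ and all }\psi\in\Ph(\frakE).
\]
The two characterisations coincide verbatim, so $\CI^\perp=\mathfrak{PB}_\frakE(\CI)\inj$.

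For (2) one argues symmetrically. First $\Ph(\mathfrak{PO}_\frakE(\CJ))=(\Sigma\CJ)\,\Ph(\frakE)$ by the definition of the pushout almost $n$-exact structure. Then Lemma~\ref{basic-CI-inj}(a) identifies $\mathfrak{PO}_\frakE(\CJ)\proj$ with the class of $f$ satisfying $\Ph(\mathfrak{PO}_\frakE(\CJ))\,f=0$, i.e.\ $(\Sigma j)\,\psi\,f=0$ for every $j\in\CJ$ and every $\psi\in\Ph(\frakE)$. This is precisely the condition $f\in{^\perp\CJ}$. No genuine obstacle is expected; the only thing to keep straight is the direction of the composition in the two phantom ideals $\Ph(\frakE)\CI$ versus $(\Sigma\CJ)\Ph(\frakE)$, which matches the asymmetry between $\CI^\perp$ and ${^\perp\CJ}$.
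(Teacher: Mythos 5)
Your proposal is correct and follows the same route as the paper: identify $\Ph(\mathfrak{PB}_\frakE(\CI))=\Ph(\frakE)\CI$, then apply Lemma~\ref{basic-CI-inj} to translate $\mathfrak{PB}_\frakE(\CI)\inj$ into the condition $(\Sigma g)\Ph(\frakE)\CI=0$, which is exactly the defining condition for $\CI^\perp$. The paper states this more tersely but the content is identical, and the dual argument for (2) is the same.
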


\begin{proof}
A homomorphism $j:A\to E$ is in $\CI^\perp$  if and only if $(\Sigma j)\Ph(\frakE)\CI=0$.
But $\Ph(\frakE)\CI=\PB_\frakE(\CI)$, and we apply
Lemma \ref{basic-CI-inj} to obtain the conclusion.
\end{proof}

\begin{corollary}\label{PB-inclusions}
\begin{enumerate}[{\rm (1)}]
  \item Let $\CI\in \mathbf{Ideals}(\CA)$.  Then $$\CI\subseteq \Phi_\frakE (\mathfrak{PB}_\frakE(\CI))\subseteq {^{\perp}(\CI^{\perp})},$$ both inclusions becoming equalities when $\CI$ is special precovering.
  \item Let $\CJ\in \mathbf{Ideals}(\CA)$. Then $$\CJ\subseteq \Phi^\frakE (\mathfrak{PO}_\frakE(\CJ))\subseteq ({^\perp\CJ})^{\perp},$$ both inclusions becoming equalities when $\CJ$ is special preenveloping.
\end{enumerate}
\end{corollary}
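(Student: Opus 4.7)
My plan is to deduce both inclusions and the equality statements by stringing together the earlier results, since the required identifications have essentially all been established.

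For part (1), the first inclusion $\CI \subseteq \Phi_\frakE(\mathfrak{PB}_\frakE(\CI))$ is exactly one half of Corollary \ref{connection-PB-phantoms}(1), which is itself a formal consequence of the Galois connection from Theorem \ref{Galois-correspondences}. For the second inclusion $\Phi_\frakE(\mathfrak{PB}_\frakE(\CI)) \subseteq {^\perp(\CI^\perp)}$, I would apply Theorem \ref{orth-F}(1)(a) to the almost $n$-exact structure $\frakF := \mathfrak{PB}_\frakE(\CI)$, which is contained in $\frakE$ by construction. That theorem gives orthogonality of the pair $(\Phi_\frakE(\mathfrak{PB}_\frakE(\CI)), \mathfrak{PB}_\frakE(\CI)\text{-}\mathrm{inj})$, i.e. $\Phi_\frakE(\mathfrak{PB}_\frakE(\CI)) \subseteq {^\perp(\mathfrak{PB}_\frakE(\CI)\text{-}\mathrm{inj})}$. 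Now Proposition \ref{I-perp-ex-PB}(1) identifies $\mathfrak{PB}_\frakE(\CI)\text{-}\mathrm{inj} = \CI^\perp$, which completes the second inclusion.

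To get equalities when $\CI$ is special precovering, I would invoke Theorem \ref{cotorsion-precovering}(1), which tells us that $(\CI, \CI^\perp)$ is then an ideal cotorsion pair; in particular $\CI = {^\perp(\CI^\perp)}$. Sandwiched between $\CI$ on the left and ${^\perp(\CI^\perp)} = \CI$ on the right, the intermediate term $\Phi_\frakE(\mathfrak{PB}_\frakE(\CI))$ must coincide with $\CI$, which forces both inclusions to be equalities.

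Part (2) follows by the dual chain of reasoning: the first inclusion comes from Corollary \ref{connection-PB-phantoms}(2), the second from Theorem \ref{orth-F}(2)(a) applied to $\frakF := \mathfrak{PO}_\frakE(\CJ)$ combined with the identification ${^\perp}\CJ = \mathfrak{PO}_\frakE(\CJ)\text{-}\mathrm{proj}$ from Proposition \ref{I-perp-ex-PB}(2); and the equalities when $\CJ$ is special preenveloping come from Theorem \ref{cotorsion-precovering}(2) which yields $\CJ = ({^\perp}\CJ)^\perp$. There is no real obstacle here beyond bookkeeping the two dualities; the whole statement is essentially a packaging of the Galois connection, the orthogonality of $(\Phi_\frakE(\frakF),\frakF\text{-}\mathrm{inj})$, and the fact that special precoverability forces $\CI$ to be the left half of an ideal cotorsion pair.
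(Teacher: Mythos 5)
Your proof is correct and follows essentially the same route as the paper's: Corollary \ref{connection-PB-phantoms} for the first inclusion, Theorem \ref{orth-F}(1)(a) with $\frakF=\mathfrak{PB}_\frakE(\CI)$ together with Proposition \ref{I-perp-ex-PB}(1) for the second, and Theorem \ref{cotorsion-precovering} for the equalities when $\CI$ is special precovering. You were also right to isolate part (a) of Theorem \ref{orth-F}, since part (b) requires enough $\frakF$-injective morphisms, a hypothesis not available here.
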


\begin{proof} We only prove (1).
The first inclusion is
a consequence of Corollary \ref{connection-PB-phantoms}.

For the second inclusion, we replace in Theorem \ref{orth-F} the almost $n$-exact structure $\frakF$ by $\mathfrak{PB}_\frakE(\CI)$,
hence we have  $$\Phi_\frakE (\mathfrak{PB}_\frakE(\CI))\subseteq  {^\perp(\mathfrak{PB}_\frakE(\CI)\textrm{-inj})}.$$
By Proposition \ref{I-perp-ex-PB} we have $\mathfrak{PB}_\frakE(\CI)\textrm{-inj}=\CI^{\perp}$, hence
$\Phi_\frakE (\mathfrak{PB}_\frakE(\CI))\subseteq {^{\perp}(\CI^{\perp})}.$
Finally if $\CI$ is special preevveloping the equality $\CI={^{\perp}(\CI^{\perp})}$ follows by Theorem \ref{cotorsion-precovering}.
\end{proof}

\begin{corollary} Let $\CI,\CJ\in \mathbf{Ideals}(\CA)$.
\begin{enumerate}[{\rm (1)}]
\item If the phantom $\CA$-ideal $\PB_\frakE(\CI)$ is precovering, then
$\Phi_\frakE (\mathfrak{PB}_\frakE(\CI))= {^{\perp}(\CI^{\perp})}$.
\item If the phantom $\CA$-ideal $\PO_\frakE(\CJ)$ is preenveloping, then
$\Phi^\frakE (\mathfrak{PO}_\frakE(\CJ))= {({^{\perp}\CJ})^{\perp}}$.
\end{enumerate}
\end{corollary}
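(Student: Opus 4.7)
The plan is to reduce both assertions to chains of equalities among ideals, using results already developed. For part (1), the inclusions $\CI\subseteq \Phi_\frakE(\mathfrak{PB}_\frakE(\CI))\subseteq {^\perp(\CI^\perp)}$ from Corollary \ref{PB-inclusions}(1) already give half of what is needed, so only the reverse inclusion ${^\perp(\CI^\perp)}\subseteq \Phi_\frakE(\mathfrak{PB}_\frakE(\CI))$ requires work.

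First I would rewrite the right-hand ideal using Proposition \ref{I-perp-ex-PB}(1), which identifies $\CI^\perp$ with $\mathfrak{PB}_\frakE(\CI)\textrm{-inj}$, yielding
$${^\perp(\CI^\perp)}={^\perp\bigl(\mathfrak{PB}_\frakE(\CI)\textrm{-inj}\bigr)}.$$
Next, I would translate the hypothesis: since $\PB_\frakE(\CI)=\Ph(\mathfrak{PB}_\frakE(\CI))$ is a precovering phantom $\CA$-ideal, Theorem \ref{Th-procov-vs-Iinj}(1), applied to the almost $n$-exact structure $\mathfrak{PB}_\frakE(\CI)$ rather than to $\frakE$, supplies enough $\mathfrak{PB}_\frakE(\CI)$-injective morphisms. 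Since $\mathfrak{PB}_\frakE(\CI)\subseteq\frakE$ by the very definition of the pullback almost $n$-exact structure, I can then invoke Theorem \ref{orth-F}(1)(b) with $\frakF:=\mathfrak{PB}_\frakE(\CI)$, obtaining
$${^\perp\bigl(\mathfrak{PB}_\frakE(\CI)\textrm{-inj}\bigr)}=\Phi_\frakE(\mathfrak{PB}_\frakE(\CI)).$$
Stringing the two displayed equalities together gives ${^\perp(\CI^\perp)}=\Phi_\frakE(\mathfrak{PB}_\frakE(\CI))$, which combined with Corollary \ref{PB-inclusions}(1) proves~(1).

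Part (2) is strictly dual. I would rewrite $({^\perp\CJ})^\perp=(\mathfrak{PO}_\frakE(\CJ)\textrm{-proj})^\perp$ via Proposition \ref{I-perp-ex-PB}(2); the preenveloping hypothesis on $\PO_\frakE(\CJ)$ then yields, through Theorem \ref{Th-procov-vs-Iinj}(2), enough $\mathfrak{PO}_\frakE(\CJ)$-projective morphisms, and Theorem \ref{orth-F}(2)(b) applied to $\frakF:=\mathfrak{PO}_\frakE(\CJ)$ closes the argument. I do not foresee any genuine obstacle: the substantive content is already packaged in the supporting propositions, and the present corollary is essentially a bookkeeping combination of them. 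The only point that warrants explicit verification is that the inclusions $\mathfrak{PB}_\frakE(\CI)\subseteq \frakE$ and $\mathfrak{PO}_\frakE(\CJ)\subseteq \frakE$ hold (which they do by definition), so that Theorem \ref{orth-F} is genuinely applicable with the chosen $\frakF$.
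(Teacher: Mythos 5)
Your proof is correct and takes essentially the same route as the paper: translate the precovering hypothesis on $\PB_\frakE(\CI)$ into the existence of enough $\mathfrak{PB}_\frakE(\CI)$-injective morphisms via Theorem \ref{Th-procov-vs-Iinj}, rewrite $\CI^\perp=\mathfrak{PB}_\frakE(\CI)\inj$ via Proposition \ref{I-perp-ex-PB}, and apply Theorem \ref{orth-F} with $\frakF=\mathfrak{PB}_\frakE(\CI)$. The appeal to Corollary \ref{PB-inclusions}(1) is harmless but redundant, since the chain of equalities you derive already delivers both inclusions.
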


\begin{proof} We only prove (1).
By Theorem \ref{Th-procov-vs-Iinj} we obtain that
there are enough $\mathfrak{PB}_\frakE(\CI)$-injective morphisms. Using Proposition \ref{I-perp-ex-PB} and
Theorem \ref{orth-F} we have
$${^{\perp}(\CI^{\perp})}={^\perp (\mathfrak{PB}_\frakE(\CI)\inj)}={\Phi}_\frakE(\mathfrak{PB}_\frakE(\CI)).$$
\end{proof}

\begin{corollary}\label{phi(F)-precovering}
Let $\frakF$ be an almost $n$-exact structure for $\CA$ such that $\frakF\subseteq \frakE$.
\begin{enumerate}[{\rm (1)}]
  \item Suppose that there are enough $\frakE$-projective morphisms and there are enough
$\frakF$-injective morphisms. Then $\Phi_\frakE(\frakF)$ is a special precovering ideal.
  \item Suppose that there are enough $\frakE$-injective morphisms and there are enough
$\frakF$-projective morphisms. Then $\Phi^\frakE(\frakF)$ is a special preenveloping ideal.
\end{enumerate}
\end{corollary}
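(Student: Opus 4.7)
The plan is to deduce the corollary as a combination of Theorem \ref{orth-F} (which identifies $\Phi_\frakE(\frakF)$ and $\Phi^\frakE(\frakF)$ as certain orthogonal ideals) with Salce's Lemma (Theorem \ref{salce-lemma}). Since the two parts are dual, I will describe only the strategy for (1); part (2) follows by the dual argument using Theorem \ref{orth-F}(2)(b) and Theorem \ref{salce-lemma}(1).

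First I would show that the hypothesis ``there are enough $\frakF$-injective morphisms'' implies that the ideal $\frakF\textrm{-}\mathrm{inj}$ is a preenveloping ideal in $\CA$. Given $A\in\CA$, the assumption provides an $\frakF$-inflation $e:A\to E$ which is $\frakF$-injective. Then any other morphism $e':A\to Y$ in $\frakF\inj$ is, by the very definition of $\frakF$-injectivity, required to factorize through every $\frakF$-inflation starting at $A$, in particular through $e$. Thus $e$ is an $\frakF\inj$-preenvelope of $A$, proving that $\frakF\inj$ is preenveloping.

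Next I would invoke Theorem \ref{orth-F}(1)(b), whose hypothesis (enough $\frakF$-injective morphisms) is available, to conclude that
\[
{^{\perp}(\frakF\textrm{-}\mathrm{inj})}=\Phi_\frakE(\frakF).
\]
Finally, using the remaining hypothesis that there are enough $\frakE$-projective morphisms, I would apply Theorem \ref{salce-lemma}(2) to the preenveloping ideal $\CJ:=\frakF\inj$. This yields that $^{\perp}(\frakF\inj)$ is a special precovering ideal, and by the displayed equality above this ideal is $\Phi_\frakE(\frakF)$, establishing (1).

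There is no substantial obstacle here: the content of the corollary lies entirely in the results already proved. The only small technical point to be careful about is that the orthogonality in Theorem \ref{orth-F}(1)(b) and the Salce-type construction in Theorem \ref{salce-lemma}(2) are both computed with respect to the same ambient almost $n$-exact structure $\frakE$, so that the two outputs can legitimately be glued; this is guaranteed by the asymmetry of the hypotheses (``enough $\frakF$-injective'' is used to identify the ideal, while ``enough $\frakE$-projective'' is used to produce the special precover).
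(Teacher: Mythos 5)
Your proposal is correct and follows essentially the same route as the paper: identify $\Phi_\frakE(\frakF) = {}^{\perp}(\frakF\inj)$ via Theorem \ref{orth-F}(1)(b), observe that $\frakF\inj$ is preenveloping, and apply Theorem \ref{salce-lemma}(2). The only difference is that you spell out the argument that enough $\frakF$-injective morphisms makes $\frakF\inj$ preenveloping, which the paper states without comment; this is a useful clarification but not a departure from the paper's approach.
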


\begin{proof} We only prove (1).
By Theorem \ref{orth-F} we know that $\Phi_\frakE(\frakF)={^\perp \frakF\inj}$. But $\frakF\inj$ is a preenveloping ideal,
hence we can apply Theorem \ref{salce-lemma} to obtain the conclusion.
\end{proof}

\subsection{Complete ideal cotorsion pairs}

In order to characterize the ideal cotorsion pairs which are complete, we will study first the existence of some special injective (respectively, projective) preenvelopes (respectively, precovers).

An $\frakH$-injective morphism $e$ is \textsl{special \wrt $\frakE$}
if it can be embedded in a morphism of $n$-angles
\[\xymatrix@=0.5cm{           
  \mathfrak{d}i:& A\ar[r]^e\ar@{=}[d] & E\ar[r]\ar[d]&B_3\ar[r]\ar[d]& \cdots \ar[r]& B_{n-1}\ar[r] \ar[d]   & X\ar[r]  \ar[d]^{\phi}  & \Sigma A \ar@{=}[d]\\
  \mathfrak{d}: & A\ar[r] &A_2\ar[r]& A_3\ar[r]&\cdots \ar[r]& A_{n-1}\ar[r]    & A_n\ar[r]     & \Sigma A         }\]
such that $\mathfrak{d}\in\frakE$ and $\phi\in\Phi_\frakE(\frakH)$.
The notion of \textsl{special $\frakH$-projective homomorphism} is defined dually.

\begin{example}\label{exemple-special-inj}
From Corollary \ref{PB-inclusions} and Proposition \ref{I-perp-ex-PB} we observe that
\begin{enumerate}[{\rm (1)}]
  \item { if $\CI$ is special precovering}, then every special $\CI^\perp$-preenvelope is a special $\mathfrak{PB}_\frakE(\CI)$-injective morphism.
  \item { if $\CJ$ is special preenveloping}, then every special ${^\perp\CJ}$-precover is a special $\mathfrak{PO}_\frakE(\CJ)$-projective morphism.
\end{enumerate}
\end{example}

\begin{proposition}\label{special-injective}
Let $\frakH\in\mathbf{Ex}(\frakE)$. Then
\begin{enumerate}[{\rm (1)}]
\item Every special $\frakH$-injective morphism { is a special $\frakH\text{-{\rm inj}}$-preenvelope} and
a special $\Phi_\frakE(\frakH)^{\perp}$-preenvelope.

\item Every special $\frakH$-projective morphism is a special $\frakH\text{-{\rm proj}}$-precover
and a special $^\perp\Phi^\frakE(\frakH)$-precover.
\end{enumerate}
\end{proposition}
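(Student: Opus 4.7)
The plan is to unpack the definition of a special $\frakH$-injective morphism and verify directly that the data it provides meet each of the two definitions of special preenvelope we want to establish.

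First I would fix a special $\frakH$-injective morphism $e\colon A\to E$ together with its defining morphism of $n$-angles, in which the bottom $n$-angle $\mathfrak{d}\in\frakE$ has $\frakE$-phantom $h\in\Ph(\frakE)$, the top $n$-angle has the shape $A\xrightarrow{e}E\to B_3\to\cdots\to X\to \Sigma A$, and the rightmost vertical comparison morphism is $\phi\colon X\to A_n$ with $\phi\in\Phi_\frakE(\frakH)$. Commutativity of the rightmost square forces the $\frakE$-phantom of the top $n$-angle to be the composition $h\phi$. Moreover, since $e$ is $\frakH$-injective by hypothesis, we also have $e\in\frakH\inj$.

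Next I would invoke Theorem \ref{orth-F}(1)(a): since $\frakH\in\mathbf{Ex}(\frakE)$, the pair $(\Phi_\frakE(\frakH),\frakH\inj)$ is orthogonal, which yields both inclusions $\Phi_\frakE(\frakH)\subseteq {^\perp(\frakH\inj)}$ and $\frakH\inj\subseteq \Phi_\frakE(\frakH)^\perp$. The first inclusion places $\phi$ in ${^\perp(\frakH\inj)}$, so the factorisation of the top $\frakE$-phantom as $h\phi$ exhibits it as a member of $\Ph(\frakE)\,({^\perp(\frakH\inj)})$. Together with $e\in\frakH\inj$, this is precisely the condition (SPE) showing that $e$ is a special $\frakH\inj$-preenvelope of $A$.

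For the second claim of (1), the trivial inclusion $\Phi_\frakE(\frakH)\subseteq {^\perp(\Phi_\frakE(\frakH)^\perp)}$ places $\phi$ in ${^\perp(\Phi_\frakE(\frakH)^\perp)}$, so the same factorisation exhibits $h\phi$ as an element of $\Ph(\frakE)\,({^\perp(\Phi_\frakE(\frakH)^\perp)})$; meanwhile the second inclusion from the orthogonality gives $e\in\frakH\inj\subseteq \Phi_\frakE(\frakH)^\perp$. Hence the very same diagram witnesses $e$ as a special $\Phi_\frakE(\frakH)^\perp$-preenvelope of $A$. Part (2) follows by dualising every step verbatim, replacing Theorem \ref{orth-F}(1)(a) by Theorem \ref{orth-F}(2)(a) and the ideals $\frakH\inj$, $\Phi_\frakE(\frakH)$ by $\frakH\proj$, $\Phi^\frakE(\frakH)$. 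There is no genuine obstacle: beyond matching the shapes of the required factorisations, the sole input is the orthogonality packaged in Theorem \ref{orth-F}, so the argument reduces to a careful chase of definitions.
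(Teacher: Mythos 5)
Your proof is correct, and the first half matches the paper exactly: both you and the paper extract $\Phi_\frakE(\frakH)\subseteq{^\perp(\frakH\text{-inj})}$ from Theorem~\ref{orth-F} to establish that a special $\frakH$-injective morphism is a special $\frakH\text{-inj}$-preenvelope. For the second half, you take a slightly shorter route to the key inclusion $\frakH\text{-inj}\subseteq\Phi_\frakE(\frakH)^{\perp}$: you read it off directly as the other half of the orthogonality in Theorem~\ref{orth-F}(1)(a), whereas the paper reaches the same inclusion by chaining Corollary~\ref{connection-PB-phantoms} (to get $\mathfrak{PB}_\frakE(\Phi_\frakE(\frakH))\subseteq\frakH$, hence $\frakH\text{-inj}\subseteq\mathfrak{PB}_\frakE(\Phi_\frakE(\frakH))\text{-inj}$) with Proposition~\ref{I-perp-ex-PB} (which identifies $\mathfrak{PB}_\frakE(\Phi_\frakE(\frakH))\text{-inj}$ with $\Phi_\frakE(\frakH)^{\perp}$). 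Your version avoids the pullback machinery and is cleaner, but the two routes just give different derivations of the identical inclusion; once it is in hand, both proofs conclude with the trivial inclusion $\Phi_\frakE(\frakH)\subseteq{^\perp(\Phi_\frakE(\frakH)^{\perp})}$ and the definition of a special preenvelope. So this is the same argument in substance, differing only in how one supporting inclusion is sourced.
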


\begin{proof}
{Using Theorem \ref{orth-F} we observe that $\Phi_\frakE(\frakH)\subseteq{^\perp}\frakH\textrm{-inj}$,
 hence every special $\frakH$-injective morphism is a special $\frakH\textrm{-inj}$-preenvelope.}

Let $e$ be a special $\frakH$-injective morphism.
By Corollary \ref{connection-PB-phantoms} we have the inclusion $\mathfrak{PB}_\frakE(\Phi_\frakE(\frakH))\subseteq \frakH$,
hence we can apply
Proposition \ref{I-perp-ex-PB} to obtain
$$e\in \frakH\textrm{-inj}\subseteq \mathfrak{PB}_\frakE(\Phi_\frakE(\frakH))\textrm{-inj}=\Phi_\frakE(\frakH)^{\perp}.$$
Since $\Phi_\frakE(\frakH)\subseteq {^\perp (\Phi_\frakE(\frakH)^\perp)}$ we can apply the definition to obtain that $e$ is a
special $\Phi_\frakE(\frakH)^\perp$-preenvelope.
\end{proof}

The following result provides a method to construct (complete) ideal cotorsion pairs, which  improves Theorem \ref{orth-F}.

\begin{theorem}\label{Iperp=injective} Let $\frakH\in\mathbf{Ex}(\frakE)$.
\begin{enumerate}[{\rm (1)}]
\item If there are enough special $\frakH$-injective morphisms, then
$$(\Phi_\frakE(\frakH),\frakH\textrm{-}\mathrm{inj})$$ is an ideal  cotorsion pair
{which is complete if there are enough $\frakE$-projective morphisms.}

\item If there are enough special $\frakH$-projective morphisms, then
$$(\frakH\textrm{-}\mathrm{proj}, \Phi^\frakE(\frakH))$$ is an ideal cotorsion pair
{which is complete if there are  enough $\frakE$-injective morphisms.}
\end{enumerate}
\end{theorem}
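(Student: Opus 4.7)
The plan is to prove part (1) in detail and obtain part (2) by dualizing (exchanging the roles of inflations/deflations and injective/projective morphisms). The orthogonality of the pair $(\Phi_\frakE(\frakH), \frakH\textrm{-}\mathrm{inj})$ was already settled in Theorem~\ref{orth-F}(1)(a), which supplies the two inclusions $\frakH\textrm{-}\mathrm{inj} \subseteq \Phi_\frakE(\frakH)^\perp$ and $\Phi_\frakE(\frakH) \subseteq {}^{\perp}\frakH\textrm{-}\mathrm{inj}$ at no cost. To upgrade these to equalities I first note that having enough special $\frakH$-injective morphisms implies, a fortiori, having enough $\frakH$-injective morphisms, so Theorem~\ref{orth-F}(1)(b) at once produces ${}^{\perp}\frakH\textrm{-}\mathrm{inj} = \Phi_\frakE(\frakH)$.

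The next step is the reverse inclusion $\Phi_\frakE(\frakH)^\perp \subseteq \frakH\textrm{-}\mathrm{inj}$. Take $j:A\to X$ in $\Phi_\frakE(\frakH)^\perp$; by hypothesis there is a special $\frakH$-injective morphism $e:A\to E$ starting at $A$. Proposition~\ref{special-injective}(1) identifies such an $e$ as a special $\Phi_\frakE(\frakH)^\perp$-preenvelope, and therefore as a $\Phi_\frakE(\frakH)^\perp$-preenvelope by Lemma~\ref{3.3.2}(2). Consequently $j$ factors as $j=ge$ for some $g:E\to X$, and since $e\in\frakH\textrm{-}\mathrm{inj}$ and $\frakH\textrm{-}\mathrm{inj}$ is an ideal, we obtain $j = ge \in \frakH\textrm{-}\mathrm{inj}$. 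This finishes the verification that $(\Phi_\frakE(\frakH), \frakH\textrm{-}\mathrm{inj})$ is an ideal cotorsion pair.

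For completeness, assume in addition that there are enough $\frakE$-projective morphisms. A second application of Proposition~\ref{special-injective}(1) shows that any special $\frakH$-injective morphism $e:A\to E$ is in fact a special $\frakH\textrm{-}\mathrm{inj}$-preenvelope of $A$, so the standing hypothesis directly delivers a special $\frakH\textrm{-}\mathrm{inj}$-preenvelope for every $A\in\CA$. In particular $\frakH\textrm{-}\mathrm{inj}$ is a preenveloping ideal, whence Salce's Lemma (Theorem~\ref{salce-lemma}(2)), which requires exactly the enough $\frakE$-projective morphisms hypothesis, yields that ${}^{\perp}\frakH\textrm{-}\mathrm{inj}=\Phi_\frakE(\frakH)$ is a special precovering ideal. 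Both sides of the pair are then special, and the cotorsion pair is complete.

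The main obstacle is purely bookkeeping: one must keep the roles of $\frakE$ and $\frakH$ straight across the many earlier results invoked, since $\frakE$ serves throughout as the ambient almost $n$-exact structure governing $\perp$ and $\Phi_\frakE(-)$, while $\frakH\subseteq\frakE$ is the smaller structure whose injective ideal forms the right half of the cotorsion pair. Part (2) is entirely dual: one replaces Proposition~\ref{special-injective}(1) by part (2), Theorem~\ref{orth-F}(1)(b) by Theorem~\ref{orth-F}(2)(b), and Theorem~\ref{salce-lemma}(2) by Theorem~\ref{salce-lemma}(1), together with the appropriate ``enough $\frakE$-injective morphisms'' hypothesis, to obtain the completeness of $(\frakH\textrm{-}\mathrm{proj},\Phi^\frakE(\frakH))$.
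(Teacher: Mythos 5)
Your proposal is correct and follows essentially the same path as the paper: both use the fact that a special $\frakH$-injective morphism is a special $\frakH\text{-}\mathrm{inj}$-preenvelope (Proposition \ref{special-injective}), Theorem \ref{orth-F}(1)(b) to get ${}^{\perp}\frakH\text{-}\mathrm{inj}=\Phi_\frakE(\frakH)$, and Salce's Lemma for completeness. The paper simply packages the remaining equality $\Phi_\frakE(\frakH)^{\perp}=\frakH\text{-}\mathrm{inj}$ and the completeness conclusion by invoking Theorem \ref{cotorsion-precovering}(2), whereas you unfold that step by hand; the underlying argument is the same.
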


\begin{proof}
Since there are enough special $\frakH$-injective morphisms, it follows that the ideal $\frakH\text{-inj}$ is
a special preenveloping ideal and there are enough $\frakH$-injective morphisms.
Then we can use Theorem \ref{orth-F} to obtain ${^{\perp} \frakH\textrm{-}\mathrm{inj}}=
\Phi_\frakE(\frakH)$.
Now the conclusions are
consequences of Theorem \ref{cotorsion-precovering}.%
\end{proof}

Now we give our main result as follows, which extends \cite[Theorem 5.3.4]{BM} from triangulated categories to $n$-angulated categories.

\begin{theorem}\label{mainthA}
Let $\frakE$ be an almost $n$-exact structure for $\CA$ such that there are enough $\frakE$-injective morphisms
and $\frakE$-projective morphisms, and let $(\CI,\CJ)$ be an ideal cotorsion pair in $\CA$.
Then the following statements are equivalent.
\begin{enumerate}[{\rm (a)}]
 \item $\CI$ is precovering.
 \item $\CI$ is special precovering.
 \item $\CJ$ is preenveloping.
 \item $\CJ$ is special preenveloping.
\item There exists  $\frakH\in\mathbf{Ex}(\frakE)$ with enough special $\frakH$-injective morphisms such that $\CI=\mathrm{\Phi}_\frakE(\frakH)$.
\item There exists  $\frakF\in\mathbf{Ex}(\frakE)$ with enough $\frakF$-injective morphisms such that $\CI=\PEF$.
\item There exists  $\frakH\in\mathbf{Ex}(\frakE)$ with enough special $\frakH$-injective morphisms such that
$\CJ=\frakH\inj$.
\item There exists  $\mathfrak{G}_n\in\mathbf{Ex}(\frakE)$ with enough special $\mathfrak{G}_n$-projective morphisms such that $\CJ=\CPEG$.
\item There exists  $\mathfrak{F}_n\in\mathbf{Ex}(\frakE)$ with enough $\mathfrak{F}_n$-projective morphisms such that $\CJ=\CPEF$.

\item There exists  $\mathfrak{G}_n\in\mathbf{Ex}(\frakE)$ with enough special $\mathfrak{G}_n$-projective morphisms such that
$\CI=\mathfrak{G}_n\proj$.
\end{enumerate}
\end{theorem}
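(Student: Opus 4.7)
The plan is to organize the proof around the core equivalence $\mathrm{(a)}\Leftrightarrow\mathrm{(b)}\Leftrightarrow\mathrm{(c)}\Leftrightarrow\mathrm{(d)}$, which I would establish directly from Lemma \ref{3.3.2} and Salce's Lemma (Theorem \ref{salce-lemma}): the implications $\mathrm{(b)}\Rightarrow\mathrm{(a)}$ and $\mathrm{(d)}\Rightarrow\mathrm{(c)}$ are immediate from Lemma \ref{3.3.2}; $\mathrm{(a)}\Rightarrow\mathrm{(d)}$ uses that there are enough $\frakE$-injective morphisms together with $\CJ=\CI^\perp$; and $\mathrm{(c)}\Rightarrow\mathrm{(b)}$ uses enough $\frakE$-projective morphisms together with $\CI={^\perp}\CJ$. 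The remaining six conditions will be attached to this core as two dual triples: $\mathrm{(e)},\mathrm{(f)},\mathrm{(g)}$ on the precovering side and $\mathrm{(h)},\mathrm{(i)},\mathrm{(j)}$ on the preenveloping side.

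For the first triple, I would use the chain $\mathrm{(b)}\Rightarrow\mathrm{(e)}\Rightarrow\mathrm{(f)}\Rightarrow\mathrm{(b)}$ together with the side-link $\mathrm{(e)}\Leftrightarrow\mathrm{(g)}$. For $\mathrm{(b)}\Rightarrow\mathrm{(e)}$, set $\frakH:=\mathfrak{PB}_\frakE(\CI)$; Corollary \ref{PB-inclusions}(1) gives $\CI=\Phi_\frakE(\frakH)$ since $\CI$ is special precovering, and Example \ref{exemple-special-inj}(1) combined with $\mathrm{(d)}$ promotes every special $\CI^\perp$-preenvelope to a special $\frakH$-injective morphism, so $\frakH$ has enough special injective morphisms. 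The step $\mathrm{(e)}\Rightarrow\mathrm{(f)}$ is trivial, and $\mathrm{(f)}\Rightarrow\mathrm{(b)}$ is exactly Corollary \ref{phi(F)-precovering}(1) applied to $\CI=\Phi_\frakE(\frakF)$. For $\mathrm{(e)}\Rightarrow\mathrm{(g)}$, Theorem \ref{Iperp=injective}(1) provides the ideal cotorsion pair $(\Phi_\frakE(\frakH),\frakH\inj)=(\CI,\frakH\inj)$, and uniqueness of the right orthogonal of a cotorsion pair forces $\frakH\inj=\CI^\perp=\CJ$. Conversely, $\mathrm{(g)}\Rightarrow\mathrm{(d)}$ follows since Proposition \ref{special-injective}(1) turns enough special $\frakH$-injective morphisms into a special preenveloping $\frakH\inj=\CJ$.

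The triple $\mathrm{(h)},\mathrm{(i)},\mathrm{(j)}$ is handled dually: set $\mathfrak{G}_n:=\mathfrak{PO}_\frakE(\CJ)$ to pass from $\mathrm{(d)}$ to $\mathrm{(h)}$ via Corollary \ref{PB-inclusions}(2) and Example \ref{exemple-special-inj}(2); use $\mathrm{(h)}\Rightarrow\mathrm{(i)}$ trivially and Corollary \ref{phi(F)-precovering}(2) for $\mathrm{(i)}\Rightarrow\mathrm{(c)}$; and pivot via Theorem \ref{Iperp=injective}(2) to obtain $\mathrm{(h)}\Leftrightarrow\mathrm{(j)}$, with Proposition \ref{special-injective}(2) supplying $\mathrm{(j)}\Rightarrow\mathrm{(a)}$ by producing special $\CI$-precovers. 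The main subtlety, and the only place where I expect any real care to be needed, is in the two ``uniqueness'' pivots $\mathrm{(e)}\Rightarrow\mathrm{(g)}$ and $\mathrm{(h)}\Rightarrow\mathrm{(j)}$: Theorem \ref{Iperp=injective} produces an ideal cotorsion pair one of whose sides is already known to equal $\CI$ (respectively $\CJ$), and one must then invoke the fact that each side of a cotorsion pair is determined by the other to transfer this to a statement about the opposite side. Beyond this, every remaining step is a direct citation of a previously established result; the main bookkeeping task is verifying at each invocation that the ``enough injective/projective'' hypothesis is furnished either by the standing assumption on $\frakE$ or by the hypothesis in force on $\frakH,\frakF,\mathfrak{G}_n,\mathfrak{F}_n$, and that the structures $\mathfrak{PB}_\frakE(\CI)$ and $\mathfrak{PO}_\frakE(\CJ)$ produced above lie in $\mathbf{Ex}(\frakE)$ by construction.
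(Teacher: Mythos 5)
Your proposal is correct and follows essentially the same route as the paper, which summarizes the argument by exactly the diagram you reconstruct: the core cycle through $(\mathrm{a})$--$(\mathrm{d})$ via Lemma \ref{3.3.2}, Salce's Lemma (Theorem \ref{salce-lemma}), and Theorem \ref{cotorsion-precovering}; the satellite chain $(\mathrm{b})\Rightarrow(\mathrm{e})\Rightarrow(\mathrm{f})\Rightarrow(\mathrm{b})$ with $(\mathrm{e})\Leftrightarrow(\mathrm{g})$ using Example \ref{exemple-special-inj}, Corollary \ref{PB-inclusions}, Corollary \ref{phi(F)-precovering}, Theorem \ref{Iperp=injective}, and Proposition \ref{special-injective}; and the dual satellite for $(\mathrm{h})$, $(\mathrm{i})$, $(\mathrm{j})$. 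If anything, your write-up is slightly more careful than the paper's summary diagram, which labels $(\mathrm{a})\Leftrightarrow(\mathrm{b})$ and $(\mathrm{c})\Leftrightarrow(\mathrm{d})$ by Lemma \ref{3.3.2} alone (which only yields the special $\Rightarrow$ non-special direction), whereas you explicitly close the cycle $(\mathrm{a})\Rightarrow(\mathrm{d})\Rightarrow(\mathrm{b})$ through Salce's Lemma and Theorem \ref{cotorsion-precovering}, which is what the argument actually requires.
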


\begin{proof}
We summarize the proof as the following diagram.
\[
\xymatrix@C=1.5cm@R=1cm{&({\rm a})\ar@{<=>}[d]_{{\rm Lem. } \ref{3.3.2}}&({\rm c})\ar@{<=>}[d]^{{\rm Lem. } \ref{3.3.2}}&\\
({\rm e})\ar@{<=>}[d]_{{\rm Thm. } \ref{Iperp=injective}}\ar@{=>}[rd]^{\!\!\!\!\!\!\!\!\!\!\!\!\rm Obvious}&({\rm b})\ar@{=>}[l]_{{\rm Exa. }\ref{exemple-special-inj}}\ar@{<=>}[r]^{{\rm Thm. }\ref{cotorsion-precovering}}&({\rm d})\ar@{=>}[r]^{{\rm Exa. }\ref{exemple-special-inj}}&({\rm h})\ar@{=>}[ld]^{\!\!\!\!\!\!\!\!\!\!\!\!\rm Obvious}\ar@{<=>}[d]^{{\rm Thm. } \ref{Iperp=injective}}\\
({\rm g})&({\rm f})\ar@{=>}[u]_{\!\!\!\!\!\!\!\!{\rm Cor. }\ref{phi(F)-precovering}}&({\rm i})\ar@{=>}[u]_{\!\!\!\!\!\!\!\!\!\!\!\! {\rm Cor. }\ref{phi(F)-precovering}}&({\rm j})}
\]
\end{proof}


\vspace{0.5cm}

{\bf Acknowledgements.} 
This work was  supported by NSFC (Nos. 11871301,
11901341, 11971225), the project ZR2019QA015 supported by Shandong Provincial Natural Science Foundation,  the project funded by China Postdoctoral Science
Foundation (2020M682141), and the Young Talents Invitation Program of Shandong Province.

 \end{document}